\def\l@subsection{\@tocline{2}{0pt}{2.5pc}{5pc}{}}
\def\l@subsubsection{\@tocline{2}{0pt}{5pc}{7.5pc}{}}
\newcommand{\seqnum}[1]{\href{http://oeis.org/#1}{\underline{#1}}}
\theoremstyle{plain}
\newtheorem{theorem}{Theorem}[section]
\newtheorem{corollary}[theorem]{Corollary}
\newtheorem{lemma}[theorem]{Lemma}
\theoremstyle{definition}
\newtheorem{definition}[theorem]{Definition}
\newtheorem{remark}[theorem]{Remark}
\newcommand{\PF}{\mathrm{PF}}
\newcommand{\FR}{\mathrm{FR}}
\newcommand{\fr}{\mathrm{FR}}
\newcommand{\UFR}{\mathrm{UFR}}
\newcommand{\UPF}{\mathrm{UPF}}
\newcommand{\Lucky}{\mathsf{Lucky}} 
\newcommand{\lucky}{\mathsf{lucky}}
\newcommand{\Fub}{\mathrm{Fub}}
\newcommand{\N}{{\mathbb N}}
\def\modd#1 #2{#1\ \mbox{\rm (mod}\ #2\mbox{\rm )}}
\newcommand{\floor}[1]{\lfloor #1 \rfloor}
\newcommand{\ceil}[1]{\lceil #1 \rceil}
\title{Lucky Cars in Fubini Rankings and Unit Fubini Rankings}
\subjclass[2010]{05A05, 05A15, 05A18.}
\keywords{Parking function, Fubini ranking, unit Fubini ranking, lucky car}
\author[C. Barreto]{Camilo Barreto}
\author[Beerbower]{Melissa Beerbower}
\author[Elder]{Jennifer Elder}
\address[J.~Elder]{Department of Computer Science, Mathematics and Physics, Missouri Western State University, St. Joseph, MO 64507}
 \email{\textcolor{blue}{\href{mailto:jelder8@missouriwestern.edu}{jelder8@missouriwestern.edu}}}
\author[P.~E.~Harris]{Pamela E. Harris}
\address[M. ~Beerbower, P.~E.~Harris]{Department of Mathematical Sciences, University of Wisconsin-Milwaukee, Milwaukee, WI 53211 United States}
\email{\textcolor{blue}{\href{mailto:mbeerbower@luc.edu}{beerbow2@uwm.edu}}, \textcolor{blue}{\href{mailto:peharris@uwm.edu}{peharris@uwm.edu}}}
\author[Martinez]{Lucy Martinez}
\address[L. Martinez]{Department of Mathematics, Rutgers University, Piscataway, NJ 08854}
\email{\textcolor{blue}{\href{mailto:lucy.martinez@rutgers.edu}{lucy.martinez@rutgers.edu}}}
\author[J. L. Ram\'{\i}rez]{Jos\'e L. Ram\'{\i}rez}
\author[S. Ram\'{\i}rez]{Samuel Ram\'{\i}rez}
\author[Shirley]{Grant Shirley}
\address[G. ~Shirley]{Department of Mathematics and Statistics, East Tennessee State University, Johnson City, TN 37614}
\email{\textcolor{blue}{\href{mailto:shirleyg@etsu.edu}{shirleyg@etsu.edu}}}
\author[J. C. V\'{a}squez]{Julio C.  V\'{a}squez}
\address[C. Barreto, J. L. Ram\'{\i}rez, S. Ram\'{\i}rez, J. C. V\'{a}squez]{Departamento de Matem\'aticas,  Universidad Nacional de Colombia,  Bogot\'a, Colombia}
\email{\textcolor{blue}{\href{mailto:cbarreto@unal.edu.co}{cbarreto@unal.edu.co}}}
\email{\textcolor{blue}{\href{mailto:jlramirezr@unal.edu.co}{jlramirezr@unal.edu.co}}}
\email{\textcolor{blue}{\href{mailto:samramirezra@unal.edu.co}{samramirezra@unal.edu.co}}}
\email{\textcolor{blue}{\href{mailto:julcvasquezare@unal.edu.co}{julcvasquezare@unal.edu.co}}}
\begin{document}

\begin{abstract}
We study \emph{lucky cars} in subsets of parking functions, called Fubini rankings and unit Fubini rankings. A Fubini ranking is a sequence of nonnegative integers that encodes a valid ranking of competitors, where ties are allowed. A car (or competitor) is said to be \emph{lucky} if it is the first instance of that rank appearing in the sequence.  We present combinatorial characterizations and enumeration formulas for lucky cars in both Fubini rankings and unit Fubini rankings, and establish connections between these objects and ordered set partitions, as well as integer compositions.  
To obtain our results, we use several techniques to enumerate statistics over these families of objects. 
In particular, we employ generating functions, bijective and combinatorial arguments, recurrence relations, and Zeilberger’s creative telescoping method.
\end{abstract}
\maketitle

\pagestyle{fancy}
\fancyhead{} % clear all header fields
\renewcommand{\headrulewidth}{0pt}
\fancyhead[CE]{Lucky Cars of Fubini Rankings and Unit Fubini Rankings}
\fancyhead[CO]{Barreto et al.}
\fancyfoot{} 
\fancyfoot[C]{\thepage}

\section{Introduction}

Throughout, we let $\N=\{1,2,3,\ldots\}$, and for $n\in \N$ define $[n]=\{1,2,\ldots,n\}$. 
Recall that the set of parking functions of length $n$, denoted $\PF_n$, is the set of all $n$-tuples $\alpha=(a_1,a_2,\ldots,a_n)\in[n]^n$ whose weakly increasing rearrangement, denoted  $\alpha^\uparrow=(a_1',a_2',\ldots,a_n')$, satisfy $a_i'\leq i$ for all $i\in[n]$.
Konheim and Weiss, and independently by Pyke, established that $|\PF_n|=(n+1)^{n-1}$, see \cite{konheim1966occupancy,Pyke}.

Another way to describe parking functions is to consider the tuples in the context of cars trying to park on a one-way street. An $n$-tuple $(a_1,a_2,\ldots,a_n)\in[n]^n$ is considered as a list of parking preferences where $a_i$ is the preferred parking spot of the $i$th car. Each car enters the one-way street one at a time, where the parking spots are numbered consecutively, and attempts to park using the following parking rule: 
\begin{itemize}
\item For each $i=1,2,\ldots, n$, car $i$ with preference $a_i$ drives to that spot and parks if it finds that spot available.
\item If spot $a_i$ is unavailable, car $i$ continues down the street and parks in the first available parking spot it encounters. 
\end{itemize}
If all cars are able to park on the street using this procedure, the list of preferences is a parking function. For example, the parking function $(3,2,4,4,1)$ parks the cars in the order car 5, car 2, car 1, car 3 and car 4. In comparison, the preference tuple $(1,5,5,2,3)$ fails to be a parking function as car 3 finds its preferred parking spot occupied and there are no further spots on the street in which it can park.

After defining a set of combinatorial objects, one natural direction of study is to consider discrete statistics and statistic generating functions on the set. 
In the realm of parking functions and subsets of parking functions, the \emph{lucky statistic} is usually one of the first statistics to consider.
Studied by Gessel and Seo~\cite{GesselandSeo}, the lucky statistic counts the number of cars that are able to park in their preferred spot, making these cars \emph{lucky}. 
We let $\Lucky(\alpha)$ denote the set of lucky cars in the parking function $\alpha$, and we let $\lucky(\alpha)=|\Lucky(\alpha)|$, denote the number of lucky cars in $\alpha$. 

For example, for the parking function $\alpha_1=(3,2,4,4,1)$, cars $1, 2$, $3$, and $5$ are lucky, so $\Lucky(\alpha_1)=\{1,2,3,5\}$ and $\lucky(\alpha_1)=4$.  If $\alpha_2=(1,1,\ldots,1)\in[n]^n$, then  $\Lucky(\alpha_2)=\{1\}$, which is the ``unluckiest'' parking functions since the only car that parks in its preferred spot is car $1$. 
Permutations of $[n]$ are the ``luckiest'' parking functions, as every car gets to park in their preference; that is, if $\alpha$ is a permutation of $[n]$, then $\Lucky(\alpha)=[n]$ and $\lucky(\alpha)=n$.

Gessel and Seo also gave the following generating function for the lucky car statistic on parking functions~\cite{GesselandSeo}:
\begin{align}\label{eq:GesselSeo}
  L_n(q)=\displaystyle\sum_{\alpha \in \PF_{n} }q^{\lucky(\alpha)} = q\prod_{i=1}^{n-1} (i+(n-i+1)q),  
\end{align}
and we refer to $L_n(q)$ as the \textit{lucky polynomial} of $\PF_n$.
Their work is related to rooted trees, and the proof relies on ordered set partitions and generating function techniques.  Slav\'{i}k and Vestinick\'{a} in \cite[Equation 11]{slavikvestenicka} provide an analogous result for \eqref{eq:GesselSeo} in the case where cars have different sizes. 

Other work on the lucky statistic on parking functions includes that of Aguillon et al.
~\cite{displacement_hanoi}, which studies the subset of parking functions with $n-1$ lucky cars in which the single unlucky car parks precisely one spot away from its preference. 
They establish a bijection between this subset of parking functions and ideal states in the tower of Hanoi game \cite[Theorem 1]{displacement_hanoi}.
Harris, Kretschmann, and Mart\'inez Mori \cite{harris2023lucky}, consider the sequences $\alpha \in [n]^n$
(not necessarily parking functions) with exactly $n-1$ lucky cars.
They establish that the number of these sequences is equal to the total number of comparisons performed by the \texttt{Quicksort} algorithm given all possible orderings of an array of size $n$~\cite{harris2023lucky}. Colmenarejo et al.~\cite{colmenarejo2024luckydisplacementstatisticsstirling} consider the set of Stirling permutations of order $n$, which are a subset of parking functions of length~$2n$. 
Among, their results they establish that the number of Stirling permutations with maximally many lucky cars (which is $n$ lucky cars) is a Catalan number, 
and that there are $(n-1)!$ Stirling permutations with exactly one lucky car (that being the first car). Also, Harris
and Martinez determined formulas for the number of parking functions with a fixed set of lucky cars \cite{FixedLuckyCars}. They first characterized the outcomes permutations of parking functions with a fixed lucky set, and used the characterization and an argument often referred to as ``counting through permutations'' to enumerate the parking functions with a fixed lucky set.

In this article, we study the properties related to lucky cars and lucky statistics in the following three subsets of $\PF_n$: 

\begin{enumerate}
    \item[($\FR_n$):] The Fubini rankings with $n$ competitors are  $n$-tuples $\alpha=(a_1,a_2,\ldots,a_n)\in [n]^n$ that records a valid ranking of $n$ competitors with ties allowed (\Cref{def:fubini_rank}) and the set is denoted by $\FR_n$. We let $\FR_{n}^\uparrow$ denote the set of weakly increasing Fubini rankings with $n$ competitors.

\item[($\UPF_n$):] Unit interval parking functions of length $n$ are parking functions of length $n$ in which cars park in their preference or in the spot in front of their preference (\Cref{def:unit interval pfs}). We let $\UPF_n$ denote the set of unit interval parking functions of length~$n$. 
Hadaway \cite{Hadaway_unit_interval} established a bijection between $\UPF_n$  and $\FR_n$. 
We let 
    $\UPF_n^{\uparrow}$ denote the set of weakly increasing unit interval parking functions.
    
    \item[($\UFR_n$):] The set of unit Fubini rankings with $n$ competitors consists of the intersection between the set of unit interval parking functions and Fubini rankings, and consists of Fubini rankings in which a competitor can tie with at most one other competitor at that rank (\Cref{def: unit fubini rankings}). 
    We denote this set by $\UFR_n$ and let $\UFR_n^\uparrow$ denote the subset consisting of weakly increasing unit Fubini rankings.
    \end{enumerate}

\Cref{secA} provides results related to Fubini rankings, followed by the results for weakly increasing Fubini rankings. These results are also true for the unit interval parking functions through the bijective map in \cite[Theorem 2.5]{unit_pf}. \Cref{secB} provides results related to unit Fubini rankings, followed by the results for weakly increasing unit Fubini rankings. 
Sections \ref{secA} and \ref{sec:weakly increasing fubinis}, and Sections \ref{secB} and \ref{sec:w_inc_ufr} are organized as follows:
\begin{enumerate}
\item Definitions and background on the sets of interest.
\item Characterization of the set of lucky cars given an element in the set. We call these \textit{lucky sets}.
\item Set enumerations through the number of lucky cars, and related recursions.
\item The lucky polynomials over the set of interest.
\item Results on expected number of lucky cars in an element in the set.
\item Exponential generating functions for the lucky statistic over the set, and exponential generating functions for the size of the set.
\end{enumerate}

Then Sections \ref{sec:fixed lucky cars} and \ref{sec:weakly increasing fubinis lucky sets}, and Sections \ref{sec:unit fubini lucky sets} and \ref{sec:weakly increasing unit fubini lucky sets} focus on enumerating the number of elements contained in each of the sets of interest through fixing sets of lucky cars.
We conclude in \Cref{sec:future} with some directions for future study.

\section{Enumeration of Fubini Rankings by the  Number of Lucky Cars}\label{secA}

In this section, we enumerate  Fubini rankings and weakly increasing Fubini rankings according to the number of lucky cars.  We begin by recalling the following definition (see~\cite{boolean}).

\begin{definition}\label{def:fubini_rank}
A \emph{Fubini ranking with $n$ competitors} is a tuple $\alpha = (a_1, a_2, \dots, a_n)\in [n]^n$ that encodes a valid ranking of $n$ competitors, where ties are allowed; that is, multiple competitors can be tied and have the same rank.  If $k$ competitors share rank $i$, then the subsequent $k-1$ ranks $i+1, i+2, \ldots, i+k-1$ are omitted.
\end{definition}

For example, the tuple $(8,1,2,5,6,2,2,6)$ is a Fubini ranking with $8$ competitors: competitor 1 ranks eighth, competitor 2 ranks first, competitors 3, 6, and 7 tie at rank two, competitor 4 ranks fifth, and competitors 5 and 8 rank sixth.  However, $(1,1,1,2)$ is not a Fubini ranking, since the triple tie for rank one would disallow ranks two and three, making rank four the next available rank. 

\begin{remark}
A Fubini ranking $\alpha=(a_1,a_2,\ldots,a_n)$ can be interpreted as the outcome of a race among $n$ competitors (cars), where ties are allowed. Each entry $a_i$ represents the final position of car~$i$, and all cars with the same value of $a_i$ form a \emph{tie block} at that position. 
Under this interpretation, each Fubini ranking records the order in which groups of cars finish, allowing for ties, while preserving the order of appearance of new ranks.
\end{remark}

We let $\fr_n$ denote the set of Fubini rankings with $n$ competitors. 
For example, the 13 Fubini rankings with $3$ competitors are
\[\fr_3=\left \{\begin{matrix}(1,1,1),(1,1,3),(1,3,1),(3,1,1),(1,2,2),(2,1,2),(2,2,1),\\(1,2,3),(1,3,2),(2,1,3),(2,3,1),(3,1,2),(3,2,1)\end{matrix}\right\}.\]
We remark that 
Cayley~\cite{cayley_2009} proved that $|\fr_n|=\Fub_n$, where
$\Fub_n$ denotes the $n$th Fubini number (OEIS \seqnum{A000670} \cite{OEIS}), which are also known as the ordered Bell numbers. These numbers are given by the expression
\begin{align}\label{eq:fubini numbers2}
 \Fub_{n}
  =\sum_{k=1}^n k!\, S(n,k),
\end{align}
where $S(n,k)$ are the Stirling numbers of the second kind (OEIS \seqnum{A008277} \cite{OEIS}) and count the number of set partitions of $[n]$ into $k$ nonempty parts. Moreover, Fubini rankings are permutation invariant, which means that any rearrangement of the entries of a Fubini ranking is again a Fubini ranking.

We recall that every Fubini ranking is a parking function (see \cite[Lemma 2.3]{unit_pf}), hence we can study the lucky cars of Fubini rankings.
Next we establish that, in a Fubini ranking, the set of lucky cars consists of the first cars to place in a new rank, that is, the first car in each tie block.  This property does not hold for parking functions:  for example, in the parking function $(1,1,2)$, the only lucky car is the first one, even though car~3 is the first to prefer spot~2.

\begin{theorem}
\label{teor:lucky count is equal to number of ranks}
Let $\alpha=(a_1,a_2,\ldots,a_n)\in\FR_n$ and let  $1=r_1<r_2<\cdots<r_k$ be the distinct ranks appearing in $\alpha$. Then 
\[\Lucky(\alpha)=\{i\in[n]:a_i\neq a_j \mbox{ for all $j<i$}\}.\] 
Moreover,  $\lucky(\alpha)=k$, which is the number of distinct ranks appearing in $\alpha$.
\end{theorem}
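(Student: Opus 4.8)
The plan is to track precisely which parking spots are filled by each tie block and then to identify, within each block, the unique car that lands on its preference. Write $m_j$ for the number of competitors sharing rank $r_j$, so that $m_1 + \cdots + m_k = n$. The defining skip property of Fubini rankings (\Cref{def:fubini_rank}) forces $r_{j+1} = r_j + m_j$ for each $j$, and hence $r_j = 1 + \sum_{i<j} m_i$; in particular there are exactly $r_j - 1$ competitors whose rank is strictly less than $r_j$. These identities are the arithmetic backbone of the argument.

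The central step will be to show, for every $j$, that the cars preferring $r_j$ occupy exactly the contiguous block of spots $\{r_j, r_j+1, \ldots, r_j + m_j - 1\}$. First I would use the elementary fact that a car with preference $p$ can only come to rest in a spot with index $\geq p$, since it drives to $p$ and then proceeds forward. Consequently spots $1, \ldots, r_j - 1$ can be occupied only by cars whose preference is strictly less than $r_j$. There are exactly $r_j - 1$ such cars by the count above, and since $\alpha$ is a parking function (every Fubini ranking lies in $\PF_n$) all $n$ spots are filled. A counting comparison then forces these $r_j - 1$ cars to fill precisely $\{1, \ldots, r_j - 1\}$. Applying this with $j$ and with $j+1$ (using the convention $r_{k+1} = n+1$) and subtracting shows that the $m_j$ cars preferring $r_j$ fill exactly $\{r_j, \ldots, r_{j+1} - 1\} = \{r_j, \ldots, r_j + m_j - 1\}$.

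With the block structure in hand, the luckiness statement follows quickly. Spot $r_j$ itself is filled by some car preferring $r_j$, because no car of smaller preference ever reaches it and no car of larger preference can back up to it. Among the $m_j$ cars preferring $r_j$, the first to arrive finds spot $r_j$ empty---no earlier car, of any preference, could have taken it---so it parks on its preference and is lucky; each of the remaining $m_j - 1$ cars in the block finds $r_j$ occupied and parks strictly beyond it, hence is unlucky. Therefore exactly one lucky car arises per tie block, namely the first occurrence of each rank value, which is precisely the set $\{i \in [n] : a_i \neq a_j \text{ for all } j < i\}$. Counting one lucky car for each of the $k$ distinct ranks yields $\lucky(\alpha) = k$.

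The main obstacle, and the place demanding care, is the block-filling step: because cars arrive in an arbitrary interleaved order, one cannot simply read off the outcome from the weakly increasing rearrangement. The monotonicity principle ``preference $p$ implies final spot $\geq p$'' together with the exact Fubini count of $r_j - 1$ smaller-ranked cars is what rescues the argument, converting a dynamic process into a clean counting statement that is independent of arrival order. I would take care to invoke the skip property correctly, since it is what guarantees that the multiplicities $m_j$ align so that the blocks tile $[n]$ without gaps or overlaps.
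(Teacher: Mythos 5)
Your proof is correct and follows the same route as the paper's: decompose the Fubini ranking into tie blocks, use the skip property to get $r_j = 1 + \sum_{i<j} m_i$, and show that exactly the first car of each block is lucky. The only difference is one of rigor rather than strategy: the paper simply asserts that the first car in each tie block occupies its rank while the rest of the block finds it taken, whereas you justify this with the counting argument (cars preferring values less than $r_j$ fill exactly spots $1,\ldots,r_j-1$, so no earlier car can overflow into spot $r_j$), thereby making explicit a step the paper's proof leaves implicit.
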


\begin{proof}
Let $\alpha \in \FR_n$ be a Fubini ranking with $n$ competitors and with $k$ distinct ranks  $1=r_1<r_2<\cdots <r_k$.  Suppose that $c_j$ cars have rank $r_j$, so that $\sum_{j=1}^k c_j=n$.  
    
The first car is always lucky. Since $c_1$ cars have rank $r_1=1$, the next available rank is $r_2=c_1+1$, which is shared by $c_2$ cars.  In general, for each $1 \leq j \leq k$,  there are $c_j$ cars with  rank  $r_j=c_1+\cdots+c_{j-1}+1$.  

Thus, the Fubini ranking can be divided into $k$ consecutive tie blocks:  
block~1 contains the $c_1$ cars with rank~$r_1$ (where $r_1=1$); block~2 contains the next $c_2$ cars with rank~$r_2$; and so on, until block~$k$, which contains the $c_k$ cars with rank~$r_k$.

The first car in each block is lucky, since it is the first to receive that rank, while the remaining cars in the same tie block share the rank and find that the first car (in that tie block) has already occupied it.  Therefore, a car in a Fubini ranking is lucky if and only if it is the first car in its tie block.  
Consequently, any Fubini ranking with $k$ distinct ranks has exactly $k$ lucky cars.
\end{proof}

From the previous proof, it follows that there is a natural correspondence between Fubini rankings and ordered set partitions.

 \begin{definition}
An \emph{ordered set partition} of $[n]$ into $k$ blocks is a sequence of disjoint nonempty subsets
$(B_1, B_2, \ldots, B_k)$ whose union is $[n]$.  Unlike in an ordinary (unordered) set partition, the order of the blocks matters. 
 \end{definition}

For completeness, and to make the exposition self-contained, we now describe explicitly the bijection between Fubini rankings and ordered set partitions (cf.~\cite{BHRRV}).\label{BijFR}

Let $\alpha=(a_1,a_2,\dots,a_n)$ be a Fubini ranking with $n$ competitors and with $k$ distinct ranks. 
Define
\[ B_1= \left\{ j : a_j = 1 \right\} \text{ and }
   B_i = \left\{ j : a_j = 1 + \sum_{\ell=1}^{i-1} |B_\ell| \right\}, 
   \quad \text{for } i=2,3,\dots,k.
\]
Each $B_i$ is nonempty, the sets are pairwise disjoint, and together they cover $[n]$, so $(B_1,B_2,\dots,B_k)$ is an ordered set partition of $[n]$.

Conversely, given an ordered set partition $(B_1,B_2,\dots,B_k)$ of $[n]$, the corresponding Fubini ranking $\alpha=(a_1,a_2,\dots,a_n)$ is recovered by setting
\[
a_i = 1 + \sum_{\ell=1}^{j-1}|B_\ell| 
\qquad \text{whenever } i\in B_j.
\]
For example, if $\alpha=(8,1,2,5,6,2,2,6)\in\FR_8$, then the corresponding ordered set partition is $(\{2\},\{3,6,7\},\{4\},\{5,8\},\{1\})$. 
Under this bijection, the number of blocks equals the number of distinct ranks, and therefore equals the number of lucky cars.
 
\begin{corollary}
\label{coro: k lucky fub}
    Let $f_\FR(n,k)$ denote the number of Fubini rankings with $n$ competitors and $k$  lucky cars. Then $f_\FR(n,k)=k! S(n,k)$.
\end{corollary}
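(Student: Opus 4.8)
The plan is to leverage the bijection between Fubini rankings and ordered set partitions described immediately before the corollary statement, together with the rank-counting consequence of \Cref{teor:lucky count is equal to number of ranks}. The core observation is that $f_\FR(n,k)$ counts exactly those Fubini rankings whose number of lucky cars equals $k$, and by the theorem this number of lucky cars coincides with the number of distinct ranks appearing in the ranking.

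First I would restrict the bijection to the fiber indexed by $k$. Since the map sends a Fubini ranking with $k$ distinct ranks to an ordered set partition of $[n]$ into exactly $k$ blocks, and conversely, restricting to those rankings with $k$ lucky cars yields a bijection onto the set of ordered set partitions of $[n]$ into $k$ blocks. It follows that $f_\FR(n,k)$ equals the number of such ordered set partitions.

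Next I would count the ordered set partitions of $[n]$ into $k$ blocks directly. Every such object arises by first choosing an unordered set partition of $[n]$ into $k$ nonempty blocks---of which there are $S(n,k)$ by definition of the Stirling numbers of the second kind---and then imposing one of the $k!$ linear orderings on those blocks. Distinct choices of the unordered partition, or of the ordering, produce distinct ordered set partitions, so the total is $k!\,S(n,k)$. Combining this with the previous paragraph gives $f_\FR(n,k)=k!\,S(n,k)$.

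The argument is essentially immediate given the machinery already in place, so there is no substantive obstacle. The only point requiring care is to confirm that the bijection is \emph{rank-preserving}, in the sense that the number of blocks equals the number of lucky cars; this is precisely what \Cref{teor:lucky count is equal to number of ranks} together with the explicit block-building recipe of the bijection guarantees, and I would cite both when setting up the restricted bijection.
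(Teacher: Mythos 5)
Your proposal is correct and follows essentially the same route as the paper: the paper also derives this corollary from \Cref{teor:lucky count is equal to number of ranks} together with the explicit bijection to ordered set partitions, noting that the number of blocks equals the number of lucky cars, and then counting ordered set partitions of $[n]$ into $k$ blocks as $k!\,S(n,k)$. Nothing is missing; your extra care in checking that the bijection preserves the parameter $k$ is exactly the observation the paper records just before stating the corollary.
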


From \cite[Corollary~2.13]{unit_pf} we deduce the following identity for $f_\FR(n,k)$. For completeness, we provide here a direct combinatorial proof.

\begin{theorem}\label{formula1FR}
Let $f_\FR(n,k)$ denote the number of Fubini rankings with $n$ competitors and $k$ lucky cars. Then
    $$f_\FR(n,k) =\sum_{(c_1,c_2,\ldots , c_k ) \vDash n} \binom{n}{c_1, c_2, \ldots , c_k},$$
    where the sum ranges over all compositions of $n$ into $k$ positive parts.
\end{theorem}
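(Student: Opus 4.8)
The plan is to count the Fubini rankings with $k$ lucky cars directly by sorting them according to the sizes of their tie blocks, bypassing the intermediate formula $f_\FR(n,k)=k!\,S(n,k)$ of \Cref{coro: k lucky fub}.

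First I would invoke \Cref{teor:lucky count is equal to number of ranks}: an element $\alpha\in\FR_n$ has $k$ lucky cars exactly when $\alpha$ has $k$ distinct ranks. By the analysis in the proof of that theorem, such an $\alpha$ splits into $k$ consecutive tie blocks of sizes $c_1,c_2,\ldots,c_k$, where each $c_j\geq 1$ and $\sum_{j=1}^k c_j=n$; moreover the ranks are forced by these sizes via $r_1=1$, $r_2=c_1+1$, and in general $r_j=c_1+\cdots+c_{j-1}+1$. Hence the vector of block sizes $(c_1,\ldots,c_k)$ is precisely a composition of $n$ into $k$ positive parts, and conversely each such composition prescribes the rank values uniquely.

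Next I would fix one composition $(c_1,\ldots,c_k)\vDash n$ and count the Fubini rankings whose $j$-th tie block has size $c_j$. Using the bijection with ordered set partitions recorded above, such a ranking is the same data as an ordered set partition $(B_1,\ldots,B_k)$ of $[n]$ with $|B_j|=c_j$, where $B_j$ is the set of competitors receiving rank $r_j$. The number of ways to choose which $c_1$ of the $n$ competitors lie in $B_1$, which $c_2$ of the remaining lie in $B_2$, and so on, is
\[
\binom{n}{c_1}\binom{n-c_1}{c_2}\cdots\binom{c_k}{c_k}=\binom{n}{c_1,c_2,\ldots,c_k}.
\]
Summing this over all compositions $(c_1,\ldots,c_k)\vDash n$ gives the claimed formula, since every Fubini ranking with $k$ lucky cars arises from exactly one composition together with one such assignment of competitors to blocks.

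The only real obstacle is ensuring there is no double counting: I must check that distinct (composition, assignment) pairs yield distinct Fubini rankings and that every Fubini ranking with $k$ lucky cars is obtained. This is immediate from the established bijection, because the rank values $r_j$ are forced by the block sizes, so the ordered set partition $(B_1,\ldots,B_k)$ together with its size vector $(|B_1|,\ldots,|B_k|)$ recovers $\alpha$ uniquely. Everything else reduces to the routine fact that ordered partitions of $[n]$ into labeled blocks of prescribed sizes are enumerated by multinomial coefficients. As a byproduct, comparing with \Cref{coro: k lucky fub} yields a bijective proof of the classical identity $k!\,S(n,k)=\sum_{(c_1,\ldots,c_k)\vDash n}\binom{n}{c_1,\ldots,c_k}$.
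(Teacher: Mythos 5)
Your proof is correct and follows essentially the same route as the paper's: both decompose the Fubini rankings with $k$ lucky cars according to the composition $(c_1,\ldots,c_k)\vDash n$ of tie-block sizes (which forces the rank values $r_j=c_1+\cdots+c_{j-1}+1$), count the assignments of competitors to ranks for a fixed composition by the multinomial coefficient $\binom{n}{c_1,\ldots,c_k}$, and sum over all compositions. Your added verification that no double counting occurs is a nice touch of rigor, but it is not a different argument.
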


\begin{proof}
Fubini rankings of size~$n$ with $k$ lucky cars can be counted using the following procedure.   Let $r_1< r_2< \dots< r_k$ be the distinct ranks appearing in a Fubini ranking with $n$ competitors, and let $c_i$ $(1 \leq i \leq k)$ denote the number of cars sharing rank $r_i$. Hence,  $\sum_{j=1}^k c_j= n$.  

We proceed as follows. From the $n$ available positions, first choose $c_1$ of them to assign the rank $r_1=1$;  then choose $c_2$ positions to assign the rank $r_2 = 1 + c_1$ and  continue in this way for each subsequent rank.  The number of ways to carry out this process is given by the multinomial coefficient  $\binom{n}{c_1, c_2, \ldots, c_k}$.

Finally, summing over all ordered $k$-tuples $(c_1,c_2,\ldots,c_k)$ of positive integers with total~$n$, that is, over all compositions $(c_1,\ldots,c_k)\vDash n$,
yields the desired identity.
\end{proof}

\begin{theorem}\label{recSec1}
For $n, k\geq 1$, we have 
\[f_\FR(n,k)=k(f_\FR(n-1,k) + f_\FR(n-1,k-1)).\]
\end{theorem}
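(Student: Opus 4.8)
The plan is to give a combinatorial proof using the bijection between Fubini rankings and ordered set partitions described before \Cref{coro: k lucky fub}. Since that bijection sends the number of distinct ranks (equivalently, the number of lucky cars, by \Cref{teor:lucky count is equal to number of ranks}) to the number of blocks, the quantity $f_\FR(n,k)$ counts exactly the ordered set partitions of $[n]$ into $k$ nonempty blocks. I would therefore establish the recurrence directly on this family by tracking the location of the largest element $n$, and then transport it back through the bijection.

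First I would fix an ordered set partition $(B_1,B_2,\ldots,B_k)$ of $[n]$ into $k$ blocks and look at the block containing $n$, splitting into two disjoint and exhaustive cases according to whether that block is the singleton $\{n\}$ or has size at least two. In the first case, deleting the block $\{n\}$ leaves an ordered set partition of $[n-1]$ into $k-1$ blocks; conversely, every ordered set partition of $[n-1]$ into $k-1$ blocks arises uniquely this way by inserting the singleton $\{n\}$ into one of the $k$ available gaps (before the first block, between two consecutive blocks, or after the last block). This case therefore contributes $k\, f_\FR(n-1,k-1)$. In the second case, removing $n$ from its block keeps all $k$ blocks nonempty and yields an ordered set partition of $[n-1]$ into $k$ blocks; conversely, any such partition produces an element of this case by inserting $n$ into one of its $k$ blocks, contributing $k\, f_\FR(n-1,k)$. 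Summing gives $f_\FR(n,k)=k\,f_\FR(n-1,k)+k\,f_\FR(n-1,k-1)$, which is the claimed identity.

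The step I expect to require the most care is verifying that these two cases partition the family cleanly and that each insertion operation is a genuine inverse to the corresponding deletion: in particular, that inserting the singleton $\{n\}$ into each of the $k$ gaps produces distinct ordered set partitions (so there is no overcounting), and that in the second case removing $n$ from a block of size at least two never empties that block. Both facts are immediate once isolated, but they are the crux of the bijective bookkeeping and are the only places where the argument could silently over- or undercount.

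Alternatively, I could bypass the combinatorics entirely: by \Cref{coro: k lucky fub} we have $f_\FR(n,k)=k!\,S(n,k)$, and substituting the classical Stirling recurrence $S(n,k)=kS(n-1,k)+S(n-1,k-1)$ gives $f_\FR(n,k)=k!\,\big(kS(n-1,k)+S(n-1,k-1)\big)=k\big(k!\,S(n-1,k)\big)+k\big((k-1)!\,S(n-1,k-1)\big)=k\big(f_\FR(n-1,k)+f_\FR(n-1,k-1)\big)$. This route is shorter but less illuminating, and it presupposes the Stirling recurrence rather than exhibiting the structural reason for it; I would favor the combinatorial argument to keep the section self-contained and consistent with the style of the preceding proofs.
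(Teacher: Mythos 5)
Your proposal is correct and is essentially the paper's own argument: the paper likewise deletes the last car $n$ and splits into the cases ``tied with another car'' versus ``not tied,'' counting $k$ reinsertion choices in each case, which are exactly your two cases transported through the bijection to ordered set partitions (car $n$'s tie block being the block containing $n$). If anything, your phrasing in the ordered-set-partition picture makes the insertion bookkeeping slightly more precise, since the paper's direct statement that the removed cars ``form a Fubini ranking'' glosses over the renormalization of rank values that your gap/block insertions handle cleanly.
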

\begin{proof}
Let $\alpha$ be a Fubini ranking of length~$n$ with $k$ lucky cars.  Consider the last car, that is, the $n$th car. There are two possibilities.  If the last car is tied with at least one other car, then the first $n-1$ cars form a Fubini ranking with the same number $k$ of distinct ranks, giving $f_{\FR}(n-1,k)$ possibilities. Since the last car can join any of the $k$ existing ranks, this case contributes $k f_{\FR}(n-1,k)$ possibilities.  

Otherwise, if the last car is not tied with any other car, then the remaining $n-1$ cars must form a Fubini ranking with $k-1$ distinct ranks, which can be done in $f_{\FR}(n-1,k-1)$ ways. In this situation, the last car can take any of the $k$ possible ranks, contributing $k f_{\FR}(n-1,k-1)$ possibilities.  
Adding both cases, we obtain the desired recurrence relation.
\end{proof}

Using the recurrence relation in Theorem~\ref{recSec1}, we can compute the first few values of the sequence $f_{\FR}(n,k)$:
$$[f_\FR(n,k) ]_{n, k\geq 0}=
\begin{pmatrix}
1 & 0 & 0 & 0 & 0 & 0 & 0 & 0 \\
 0 & 1 & 0 & 0 & 0 & 0 & 0 & 0 \\
 0 & 1 & 2 & 0 & 0 & 0 & 0 & 0 \\
 0 & 1 & \framebox{\textbf{6}}  & 6 & 0 & 0 & 0 & 0 \\
 0 & 1 & 14 & 36 & 24 & 0 & 0 & 0 \\
 0 & 1 & 30 & 150 & 240 & 120 & 0 & 0 \\
 0 & 1 & 62 & 540 & 1560 & 1800 & 720 & 0 \\
 0 & 1 & 126 & 1806 & 8400 & 16800 & 15120 & 5040 \\
\end{pmatrix}.$$  

This array matches with OEIS \seqnum{A019538} \cite{OEIS}. For example, the boxed entry corresponds to $f_\FR(3,2)=6$.  The six Fubini rankings with  $3$ competitors and with exactly two lucky cars are 
\[
(1,2,2),\ (2,1,2),\ (2,2,1),\ (1,1,3),\ (1,3,1),\ (3,1,1).
\]
Each of these rankings has precisely two lucky cars.

As a consequence of \Cref{coro: k lucky fub}, we obtain a complete description of the lucky polynomial for Fubini rankings.

\begin{corollary}
\label{cor:lucky poly for fubini}
The  lucky polynomial of Fubini rankings with $n$ competitors is given by 
    \[L_{\FR_n}(q)=\sum_{\alpha\in\fr_n}q^{\lucky(\alpha)}=
   \sum_{k=0}^{n}  k!S(n,k)q^k, \quad n\geq 1 
    \]
    and $L_{\FR_0}(q)=1$.
\end{corollary}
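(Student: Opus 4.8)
The plan is to derive Corollary~\ref{cor:lucky poly for fubini} directly from Corollary~\ref{coro: k lucky fub} by regrouping the defining sum of the lucky polynomial according to the value of the lucky statistic. First I would recall that, by definition,
\[
L_{\FR_n}(q)=\sum_{\alpha\in\fr_n}q^{\lucky(\alpha)},
\]
and that by Theorem~\ref{teor:lucky count is equal to number of ranks} the quantity $\lucky(\alpha)$ equals the number of distinct ranks appearing in $\alpha$. For $n\geq 1$ this number always lies in $\{1,2,\dots,n\}$, since a Fubini ranking with $n$ competitors has at least one tie block and at most $n$ of them.

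The key step is to partition the index set $\fr_n$ into the fibers of the lucky statistic. Writing $\fr_n=\bigsqcup_{k=1}^{n}\{\alpha\in\fr_n:\lucky(\alpha)=k\}$ and noting that every ranking in the $k$th fiber contributes the same monomial $q^k$, the sum factors as
\[
L_{\FR_n}(q)=\sum_{k=1}^{n}\bigl|\{\alpha\in\fr_n:\lucky(\alpha)=k\}\bigr|\,q^{k}
=\sum_{k=1}^{n}f_\FR(n,k)\,q^{k}.
\]
Substituting the closed form $f_\FR(n,k)=k!\,S(n,k)$ supplied by Corollary~\ref{coro: k lucky fub} then yields $L_{\FR_n}(q)=\sum_{k=1}^{n}k!\,S(n,k)\,q^{k}$. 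To match the stated range of summation, I would observe that the $k=0$ term is harmless for $n\geq 1$, because $S(n,0)=0$, so the sum may be extended to start at $k=0$ without changing its value. The base case is handled separately: there is a single empty Fubini ranking in $\fr_0$, with no competitors and hence zero lucky cars, so $L_{\FR_0}(q)=q^{0}=1$.

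I do not expect any genuine obstacle here, since the substantive combinatorial content has already been established in Theorem~\ref{teor:lucky count is equal to number of ranks} (identifying the lucky cars with the first cars in each tie block) and in Corollary~\ref{coro: k lucky fub} (counting rankings with a prescribed number of lucky cars). The only point requiring minor care is the bookkeeping of the summation bounds and the empty ranking in the $n=0$ case; everything else is a straightforward reindexing of the defining sum.
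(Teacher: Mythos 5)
Your proposal is correct and follows essentially the same route as the paper, which states Corollary~\ref{cor:lucky poly for fubini} as an immediate consequence of Corollary~\ref{coro: k lucky fub}: grouping the defining sum $\sum_{\alpha\in\fr_n}q^{\lucky(\alpha)}$ by the value of the lucky statistic and substituting $f_\FR(n,k)=k!\,S(n,k)$. Your additional bookkeeping (the vanishing $k=0$ term via $S(n,0)=0$ for $n\geq 1$, and the empty ranking giving $L_{\FR_0}(q)=1$) is exactly the routine detail the paper leaves implicit.
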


For example, the first few lucky polynomials are
\begin{align*}
&L_{\FR_1}(q)=q, \qquad L_{\FR_2}(q)= 2q^2+q, \qquad L_{\FR_3}(q)=6 q^3+6 q^2+q, \\
&L_{\FR_4}(q)=24 q^4+36 q^3+14 q^2+q, \qquad L_{\FR_5}(q)=120 q^5+240 q^4+150 q^3+30 q^2+q.
   \end{align*}

Given the previous results, we can determine the expected number of lucky cars in a Fubini ranking, which coincides with the expected number of distinct ranks.

\begin{corollary}\label{cor:expected number of lucky cars in a Fubini ranking} 
The expected number of lucky cars in a Fubini ranking with $n$ competitors is
\begin{align}\label{eq:average lucky in Fub Rankings}
\mathbb{E}[\lucky(\alpha): \alpha \in \FR_n] =\frac{1}{\Fub_n} \sum_{k=1}^n k\cdot k!\,S(n,k).
\end{align}
Moreover, 
\[\mathbb{E}[\lucky(\alpha): \alpha \in \FR_n]  
 \sim \frac{n}{2 \log 2}. \]
 \end{corollary}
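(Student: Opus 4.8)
The plan is to view both sums through the \emph{Fubini polynomial}
\[
F_n(y)=\sum_{k=0}^{n}k!\,S(n,k)\,y^{k},
\]
so that the denominator of \eqref{eq:average lucky in Fub Rankings} is $\Fub_n=F_n(1)$ and the numerator is the derivative $F_n'(1)=\sum_{k}k\cdot k!\,S(n,k)$. Thus $\mathbb{E}[\lucky(\alpha):\alpha\in\FR_n]=F_n'(1)/F_n(1)$, and the whole problem reduces to the asymptotics of these two quantities. The natural tool is the bivariate exponential generating function; using $\sum_{n\ge0}S(n,k)x^n/n!=(e^x-1)^k/k!$ one obtains
\[
\sum_{n\ge0}F_n(y)\frac{x^n}{n!}=\sum_{k\ge0}y^k(e^x-1)^k=\frac{1}{1-y(e^x-1)}.
\]

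First I would extract the two one-variable generating functions. Setting $y=1$ gives $\sum_n F_n(1)x^n/n!=\frac{1}{2-e^x}$, the classical EGF of the Fubini numbers. Differentiating the bivariate EGF in $y$ and then setting $y=1$ gives
\[
\sum_{n\ge0}F_n'(1)\frac{x^n}{n!}=\frac{e^x-1}{(2-e^x)^2}.
\]
Both functions are meromorphic, with poles exactly at the solutions of $e^x=2$, namely $x=\log2+2\pi i k$ for $k\in\Z$. The pole of smallest modulus is the real point $x=\log2$ (every other pole has modulus $\sqrt{(\log2)^2+(2\pi k)^2}>\log2$), so $\log2$ is the unique dominant singularity of each series; the EGF of $F_n(1)$ has a simple pole there, while the EGF of $F_n'(1)$ has a double pole.

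The core computation is the local expansion at $x=\log2$. Writing $u=x-\log2$ and using $e^x=2e^u$, one finds $2-e^x=-2u+O(u^2)$, so $\frac{1}{2-e^x}\sim\frac{1}{2(\log2-x)}$ and $\frac{e^x-1}{(2-e^x)^2}\sim\frac{1}{4(x-\log2)^2}$. Applying the standard transfer theorem for meromorphic functions (extracting coefficients of $(\rho-x)^{-1}$ and $(x-\rho)^{-2}$ with $\rho=\log2$) yields
\[
\Fub_n=F_n(1)\sim\frac{n!}{2(\log2)^{n+1}},\qquad F_n'(1)\sim\frac{(n+1)\,n!}{4(\log2)^{n+2}}.
\]
Taking the ratio, the $n!$ and all but one factor of $\log2$ cancel, giving $F_n'(1)/F_n(1)\sim\frac{n+1}{2\log2}\sim\frac{n}{2\log2}$, as claimed.

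I expect the only delicate point to be the rigorous justification of the coefficient asymptotics, that is, confirming that $\log2$ is the strictly dominant singularity and controlling the contribution of the subdominant poles (and of the simple-pole part of the double-pole expansion, which is of lower order $O((\log2)^{-n-1})$ relative to the $(n+1)$-weighted leading term). This can be handled either by the meromorphic transfer theorem of Flajolet and Sedgewick or, if a fully self-contained argument is preferred, by a partial-fraction/residue expansion that produces an exact convergent series for $F_n(1)$ and $F_n'(1)$ whose dominant terms are exactly those displayed above.
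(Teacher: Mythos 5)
Your proposal is correct, and it diverges from the paper precisely where the paper is least self-contained: the asymptotic estimate. The exact formula is handled the same way in both cases (grouping Fubini rankings by their number of lucky cars, using $f_\FR(n,k)=k!\,S(n,k)$ and $|\FR_n|=\Fub_n$; your Fubini-polynomial phrasing $\mathbb{E}=F_n'(1)/F_n(1)$ is just a compact restatement of that averaging). For the asymptotics, however, the paper simply quotes the two estimates
\[
\Fub_n \sim \frac{n!}{2(\log 2)^{n+1}},
\qquad
\sum_{k=1}^n k\cdot k!\,S(n,k) \sim \frac{n!\,n}{4(\log 2)^{n+2}}
\]
from OEIS entries A000670 and A069321 and divides, whereas you actually derive them: starting from the bivariate EGF $\bigl(1-y(e^x-1)\bigr)^{-1}$ (the paper's Theorem~\ref{egfluckyp}), you obtain $1/(2-e^x)$ at $y=1$ and $(e^x-1)/(2-e^x)^2$ after $\partial_y$ at $y=1$, identify $x=\log 2$ as the strictly dominant singularity (simple pole versus double pole), expand locally to get $\frac{1}{2(\log 2 - x)}$ and $\frac{1}{4(x-\log 2)^2}$, and transfer coefficients; your $F_n'(1)\sim (n+1)\,n!/\bigl(4(\log 2)^{n+2}\bigr)$ agrees asymptotically with the cited form, and the ratio gives $n/(2\log 2)$ exactly as claimed. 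Your route buys a self-contained proof (modulo the standard meromorphic transfer theorem of Flajolet--Sedgewick, or the residue/partial-fraction expansion you mention, either of which rigorously controls the subdominant poles at $\log 2 + 2\pi i k$), and it ties the asymptotics directly to generating functions the paper itself establishes; the paper's route is shorter but leaves the key growth estimates as external citations.
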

\begin{proof}
   We take the arithmetic mean of number of lucky cars over all Fubini ranking with $n$ competitors. Among the Fubini rankings with exactly $k$ lucky cars, the total contribution to the sum of lucky cars is   $k\cdot  f_\FR(n,k)=k\cdot k!S(n,k)$. Summing over all possible values  $1\leq k\leq n$ gives the total number lucky cars among all Fubini rankings with $n$ competitors. The expectation is then given by taking that sum and dividing by the total number of Fubini rankings, which according to Equation~\eqref{eq:fubini numbers2}, is given by $\Fub_n$. This establishes \Cref{eq:average lucky in Fub Rankings}.

For the asymptotic estimate, we use the asymptotic relations given in OEIS sequences \seqnum{A000670} and \seqnum{A069321}~\cite{OEIS}:
\begin{align}\label{eq:assymptotic estimates}
\Fub_n \sim \frac{n!}{2(\log 2)^{n+1}}
\qquad\text{and}\qquad
\sum_{k=1}^n k\cdot k!\,S(n,k)
   \sim \frac{n!\,n}{4(\log 2)^{n+2}},
\end{align}
where we recall that $f(n)\sim g(n)$ means that $\lim_{n\to\infty}\frac{f(n)}{g(n)}=1$.
Taking the quotient of the two asymptotic forms in \eqref{eq:assymptotic estimates} gives  the desired result.
\end{proof}

\begin{remark}
Observe that $\sum_{k=1}^n k\cdot k!S(n,k)$ corresponds to a Stirling transform; (OEIS \seqnum{A069321} \cite{OEIS}). 
\end{remark}

Next, we give a closed formula for the exponential generating function of $f_{\FR}(n,k)$.

\begin{theorem}\label{egfluckyp}
The exponential generating function for the number of Fubini rankings with $n$ competitors and with $k$ lucky cars is 
  \[\sum_{n\geq 0} \sum_{k\geq 0} f_\FR(n,k)q^k \frac{x^n}{n!}=\frac{1}{1-(e^x-1)q}.
\]
\end{theorem}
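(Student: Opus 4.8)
The plan is to reduce everything to the well-known exponential generating function for the Stirling numbers of the second kind. By \Cref{coro: k lucky fub} we have $f_\FR(n,k)=k!\,S(n,k)$, so the doubly-indexed sum factors as a sum over $k$ of inner sums $\sum_{n\ge 0} f_\FR(n,k)\,x^n/n!$. First I would fix $k$ and evaluate this inner sum: using $f_\FR(n,k)=k!\,S(n,k)$ together with the classical identity $\sum_{n\ge 0} S(n,k)\,x^n/n! = (e^x-1)^k/k!$, the factor $k!$ cancels and the inner sum collapses to $(e^x-1)^k$.

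Next I would substitute this back and sum over $k$ with the weight $q^k$, obtaining $\sum_{k\ge 0} \bigl(q(e^x-1)\bigr)^k$. Recognizing this as a geometric series in $q(e^x-1)$, I would sum it to $1/\bigl(1-q(e^x-1)\bigr)$, which is the claimed closed form.

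The combinatorial content behind this computation is the bijection with ordered set partitions described after \Cref{teor:lucky count is equal to number of ranks}: a single nonempty block has exponential generating function $e^x-1$, an ordered sequence of $k$ such blocks contributes $(e^x-1)^k$ (with no division by $k!$, since the blocks are ordered), and summing over all $k\ge 0$ with $q$ marking the number of blocks yields the geometric series directly. This species-theoretic reading makes the appearance of the sequence construction transparent and explains why the answer is a rational function of $e^x-1$.

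Since the argument is a short manipulation of a known generating function, there is no substantial obstacle; the only point requiring a word of care is the interchange of the two summations and the justification that $\sum_{k\ge 0}\bigl(q(e^x-1)\bigr)^k$ is a legitimate identity of formal power series in $x$. This holds because $e^x-1$ has zero constant term, so only finitely many values of $k$ contribute to each coefficient of $x^n$, and the geometric sum is therefore well defined coefficientwise. Alternatively, one could bypass the Stirling identity entirely and instead translate the recurrence of \Cref{recSec1} into the functional relation it imposes on $F(x,q):=\sum_{n,k} f_\FR(n,k)\,q^k x^n/n!$, then verify that $1/\bigl(1-q(e^x-1)\bigr)$ satisfies it with the correct initial data; I would keep the direct route as the main proof and mention the recurrence check only as a consistency verification.
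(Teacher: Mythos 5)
Your proposal is correct and follows essentially the same route as the paper's own proof: substitute $f_\FR(n,k)=k!\,S(n,k)$ from \Cref{coro: k lucky fub}, apply the Stirling EGF $\sum_{n\ge 0}S(n,k)x^n/n!=(e^x-1)^k/k!$, cancel $k!$, and sum the resulting geometric series in $q(e^x-1)$. Your added remarks on formal well-definedness (since $e^x-1$ has zero constant term) and the ordered-set-partition/species reading are sound refinements of the same argument rather than a different proof.
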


\begin{proof}
By  \Cref{coro: k lucky fub}, we have  $f_\FR(n,k) = k!S(n,k)$. The Stirling numbers of the second kind satisfy the exponential generating function (cf.~\cite{Mezo2}) 
\[\sum_{n\geq 0} S(n,k)\frac{x^n}{n!}=\frac{(e^x-1)^k}{k!}.\]
Therefore, 
\begin{align*}
   \sum_{n\geq 0}\sum_{k\geq 0} k!S(n,k)q^k\frac{x^n}{n!} &= \sum_{k\geq 0}k!q^k\left(\sum_{n\geq 0}S(n,k)\frac{x^n}{n!}\right)
    =\sum_{k\geq 0} k!q^k\frac{(e^x-1)^k}{k!}\\
    &=\sum_{k\geq 0}(e^x-1)^kq^k
    =\frac{1}{1-(e^x-1)q},
\end{align*}
where the last equality follows from the geometric series formula.
\end{proof}

As a direct consequence, setting $q=1$ recovers the exponential generating function for the Fubini numbers.

\begin{corollary}\label{coroEGF1}
The exponential generating function for the total number of Fubini rankings with $n$ competitors is
\[
\sum_{n\ge0} \Fub_n\,\frac{x^n}{n!}
   = \frac{1}{2 - e^{x}}.
\]
\end{corollary}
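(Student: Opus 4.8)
The plan is to obtain this identity as a direct specialization of the bivariate generating function established in Theorem~\ref{egfluckyp}. First I would evaluate the identity
\[
\sum_{n\geq 0} \sum_{k\geq 0} f_\FR(n,k)\,q^k \frac{x^n}{n!}=\frac{1}{1-(e^x-1)q}
\]
at $q=1$. This substitution is legitimate at the level of formal power series in $x$, since for each fixed $n$ only finitely many $k$ contribute to the coefficient of $x^n/n!$: indeed $f_\FR(n,k)=0$ whenever $k>n$, so the inner sum over $k$ is finite and the evaluation $q=1$ causes no convergence issue.

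Next I would collapse the left-hand side. Setting $q=1$, the coefficient of $x^n/n!$ becomes $\sum_{k\geq 0} f_\FR(n,k)$. By \Cref{coro: k lucky fub} we have $f_\FR(n,k)=k!\,S(n,k)$, so this sum equals $\sum_{k=0}^n k!\,S(n,k)$, which is precisely $\Fub_n$ by Equation~\eqref{eq:fubini numbers2}. Combinatorially, this is just the statement that summing the number of Fubini rankings with $n$ competitors over all possible values of the lucky-car count recovers the total number of Fubini rankings, namely $\Fub_n$. Hence the left-hand side becomes $\sum_{n\geq 0}\Fub_n\,\frac{x^n}{n!}$.

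Finally I would simplify the right-hand side: substituting $q=1$ into $\frac{1}{1-(e^x-1)q}$ gives $\frac{1}{1-(e^x-1)}=\frac{1}{2-e^x}$, which yields the claimed identity. There is essentially no obstacle in this argument; the only point deserving a word of care is the specialization $q=1$, which is harmless precisely because of the coefficientwise finiteness noted above, so the proof amounts to this substitution together with the observation that the $k$-summation of $f_\FR(n,k)$ is the Fubini number.
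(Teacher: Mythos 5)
Your proof is correct and follows exactly the paper's route: the paper obtains this corollary as "a direct consequence" of setting $q=1$ in Theorem~\ref{egfluckyp}, which is precisely your argument. The additional care you take in justifying the specialization (coefficientwise finiteness in $k$) and in identifying $\sum_k f_\FR(n,k)$ with $\Fub_n$ via Equation~\eqref{eq:fubini numbers2} is sound detail that the paper leaves implicit.
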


\subsection{Weakly Increasing Fubini Rankings}\label{sec:weakly increasing fubinis}
Let $f_\FR^\uparrow(n,k)$ denote the number of weakly increasing Fubini rankings with $n$ competitors and with $k$ distinct ranks (equivalently, $k$ lucky cars). Our first result gives a simple closed formula for $f_{\FR}^\uparrow(n,k)$. 

\begin{theorem}\label{teor: weakly increasing Fubini}
The number of weakly increasing Fubini rankings with $n$ competitors and with $k$ lucky cars is  
    $f_\FR^{\uparrow}(n,k) = \binom{n-1}{k-1}$, and the total number of weakly increasing Fubini rankings is $|\FR_n^\uparrow|=2^{n-1}$.
\end{theorem}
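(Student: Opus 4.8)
The plan is to establish the closed formula $f_\FR^{\uparrow}(n,k) = \binom{n-1}{k-1}$ via the correspondence between weakly increasing Fubini rankings and compositions, and then sum over $k$ to obtain the total count $2^{n-1}$. First I would observe that \Cref{teor:lucky count is equal to number of ranks} tells us that a Fubini ranking with $k$ lucky cars has exactly $k$ distinct ranks, which partition the $n$ competitors into $k$ nonempty tie blocks of sizes $c_1, c_2, \ldots, c_k$ with $\sum_{i=1}^{k} c_i = n$. The key reduction is that a \emph{weakly increasing} Fubini ranking is completely determined by the \emph{sequence} of tie block sizes, since the ranks must appear in increasing order $r_1 < r_2 < \cdots < r_k$ and within each block all entries are equal to the corresponding $r_j = 1 + \sum_{\ell < j} c_\ell$. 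In other words, unlike the general case of \Cref{formula1FR}, there is no freedom in choosing \emph{which} positions receive each rank; the weakly increasing condition forces the blocks to occupy consecutive positions in order of increasing rank.

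Thus I would set up a bijection between weakly increasing Fubini rankings with $n$ competitors and $k$ lucky cars, and compositions $(c_1, c_2, \ldots, c_k) \vDash n$ into exactly $k$ positive parts. Given such a composition, the associated weakly increasing Fubini ranking is the tuple whose first $c_1$ entries equal $1$, next $c_2$ entries equal $1 + c_1$, and so on; conversely, reading off the block sizes of a weakly increasing Fubini ranking recovers the composition. Counting the compositions of $n$ into exactly $k$ positive parts is the standard stars-and-bars argument: placing $k-1$ dividers into the $n-1$ gaps between $n$ units gives $\binom{n-1}{k-1}$, which establishes the formula $f_\FR^{\uparrow}(n,k) = \binom{n-1}{k-1}$.

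For the total count, I would sum over all possible values of $k$, obtaining
\[
|\FR_n^\uparrow| = \sum_{k=1}^{n} f_\FR^{\uparrow}(n,k) = \sum_{k=1}^{n} \binom{n-1}{k-1} = \sum_{j=0}^{n-1} \binom{n-1}{j} = 2^{n-1},
\]
where the last equality is the binomial theorem evaluated at $1$. Alternatively, one could argue directly that a weakly increasing Fubini ranking corresponds to an arbitrary composition of $n$ (of any length), and the number of compositions of $n$ is well known to be $2^{n-1}$, which sidesteps the summation.

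I do not expect any single step to be a serious obstacle, since the argument is essentially a clean bijection followed by elementary counting. The one point requiring care is the justification that the weakly increasing condition genuinely removes the positional freedom present in the general Fubini ranking count—that is, verifying that the tuple reconstructed from a composition is both weakly increasing \emph{and} a valid Fubini ranking (the rank values $r_j = 1 + \sum_{\ell < j} c_\ell$ are exactly the ones prescribed by \Cref{def:fubini_rank}, so the gap condition for omitted ranks is automatically satisfied). Making this correspondence precise, and confirming it is a genuine bijection rather than merely a surjection or injection, is the main place where one must be attentive.
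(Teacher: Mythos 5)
Your proposal is correct and takes essentially the same approach as the paper: both arguments reduce a weakly increasing Fubini ranking with $k$ lucky cars to a composition of $n$ into $k$ positive parts (the paper phrases this as distributing unlabeled balls among $k$ ordered nonempty urns) and apply stars and bars to obtain $\binom{n-1}{k-1}$, then sum over $k$ to get $2^{n-1}$. Your version is slightly more explicit about verifying that the reconstruction from a composition yields a valid weakly increasing Fubini ranking, but the underlying bijection and counting are identical.
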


\begin{proof}
Consider the $n$ cars as labeled  balls placed into $k$ urns, one for each distinct rank used.  The key difference from the general case is that, since the sequence is weakly increasing, the first $c_1$ cars must share the smallest rank, the next $c_2$ cars share the next rank, and so on.  Hence, the weakly increasing condition fixes the order of the cars, and they behave as unlabeled balls being distributed among $k$ ordered urns.

Knowing how many balls are in each urn (with no urn empty) is precisely equivalent to knowing how many Fubini rankings there are with exactly $k$ distinct ranks.    The count for placing $n$ labeled balls among $k$ labeled urns, with no empty urns, is given by the binomial coefficient $\binom{n-1}{k-1}$. Thus $f_\FR^\uparrow(n,k) = \binom{n-1}{k-1}$.
    
Finally, summing over all possible values of $k$ yields
    \[|\FR_n^\uparrow|=\sum_{k=1}^{n}\binom{n-1}{k-1}=2^{n-1},\] 
as claimed.
\end{proof}

\begin{remark}\label{recFRI}
It is immediate from Theorem~\ref{teor: weakly increasing Fubini} that the numbers 
$f_\FR^{\uparrow}(n,k)$ satisfy the recurrence relation
\[
f_\FR^{\uparrow}(n,k)=f_\FR^{\uparrow}(n-1,k)+f_\FR^{\uparrow}(n-1,k-1),
\]
with initial condition $f_\FR^{\uparrow}(1,1)=1$. This follows directly from the binomial identity 
$\binom{n-1}{k-1}=\binom{n-2}{k-1}+\binom{n-2}{k-2}$.
\end{remark}

The first few values of the sequence $f^\uparrow_\FR(n,k)$ are shown below:
$$[f^\uparrow_\FR(n,k) ]_{n, k\geq 0}=
\begin{pmatrix}
 1 & 0 & 0 & 0 & 0 & 0 & 0 & 0 \\
 0 & 1 & 0 & 0 & 0 & 0 & 0 & 0 \\
 0 & 1 & 1 & 0 & 0 & 0 & 0 & 0 \\
 0 & 1 & 2 & 1 & 0 & 0 & 0 & 0 \\
 0 & 1 & 3 & 3 & 1 & 0 & 0 & 0 \\
 0 & 1 & \framebox{\textbf{4}} & 6 & 4 & 1 & 0 & 0 \\
 0 & 1 & 5 & 10 & 10 & 5 & 1 & 0 \\
 0 & 1 & 6 & 15 & 20 & 15 & 6 & 1 \\
\end{pmatrix}.$$  
This array coincides with a shifted version of Pascal’s triangle.  
For example, the boxed entry corresponds to $f^\uparrow_\FR(5,2)=4$.  
The four weakly increasing Fubini rankings with~$5$ competitors and with exactly two lucky cars are
\[
(1,1,1,1,5),\quad 
(1,1,1,4,4),\quad
(1,1,3,3,3),\quad
(1,4,4,4,4).
\]

We now derive a closed formula for the lucky polynomial of  weakly increasing Fubini rankings.  We provide both an algebraic and a combinatorial proof.

\begin{corollary}\label{coro:FRn lucky poly}
    The lucky polynomial of
    weakly increasing Fubini rankings with $n$ competitors is
    \[L_{\FR_n^\uparrow}(q)  =\sum_{\alpha\in \FR^\uparrow_n}q^{\lucky(\alpha)} = q(q + 1)^{n - 1}, \quad n\geq 1,\]
    and $L_{\FR_0^\uparrow}(q)=1$.    
\end{corollary}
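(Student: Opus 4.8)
The plan is to give two independent arguments, algebraic and combinatorial, both built on top of \Cref{teor: weakly increasing Fubini}, which already supplies the coefficient count $f_\FR^\uparrow(n,k)=\binom{n-1}{k-1}$. For the algebraic proof I would first group the rankings by their number of lucky cars. Since $\lucky(\alpha)=k$ precisely when $\alpha$ has $k$ distinct ranks, \Cref{teor: weakly increasing Fubini} gives
\[
L_{\FR_n^\uparrow}(q)=\sum_{k=1}^{n} f_\FR^\uparrow(n,k)\,q^k=\sum_{k=1}^{n}\binom{n-1}{k-1}q^k .
\]
I would then perform the index shift $j=k-1$, factor out a single $q$, and recognize the remaining sum $\sum_{j=0}^{n-1}\binom{n-1}{j}q^j$ as $(1+q)^{n-1}$ by the binomial theorem, yielding $q(1+q)^{n-1}$.

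For the combinatorial proof I would instead work directly with the tie-block structure. A weakly increasing Fubini ranking with $n$ competitors is determined by its ordered sequence of tie-block sizes $(c_1,\dots,c_k)$, that is, by a composition of $n$ into $k$ positive parts, and by \Cref{teor:lucky count is equal to number of ranks} its number of lucky cars equals the number of parts $k$. Hence $L_{\FR_n^\uparrow}(q)=\sum_{(c_1,\dots,c_k)\vDash n} q^k$. I would then invoke the standard stars-and-bars bijection between compositions of $n$ and subsets $S\subseteq[n-1]$ of cut points, under which a composition into $k$ parts corresponds to a subset of size $k-1$. This converts the sum into $\sum_{S\subseteq[n-1]}q^{|S|+1}=q\prod_{i=1}^{n-1}(1+q)=q(1+q)^{n-1}$, where each element of $[n-1]$ independently contributes a factor $1+q$ according to whether or not it lies in $S$.

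Finally I would dispose of the base case $n=0$ separately: the only weakly increasing Fubini ranking with no competitors is the empty tuple, which has no lucky cars, so $L_{\FR_0^\uparrow}(q)=q^0=1$, and the closed form $q(1+q)^{n-1}$ is not intended to cover $n=0$. I do not expect a genuine obstacle, since both arguments are essentially bookkeeping layered on \Cref{teor: weakly increasing Fubini}; the only point demanding care is the index shift $k\mapsto k-1$ (equivalently the off-by-one between parts and cut points), as miscounting it would produce $(1+q)^n$ instead of the correct $q(1+q)^{n-1}$.
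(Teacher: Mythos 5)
Your proposal is correct and takes essentially the same approach as the paper: the algebraic argument (index shift plus binomial theorem applied to $f_\FR^\uparrow(n,k)=\binom{n-1}{k-1}$) is identical to the paper's, and your combinatorial argument is the same binary-choice structure in disguise, since choosing a cut-point set $S\subseteq[n-1]$ by stars-and-bars is equivalent to the paper's phrasing in which each position after the first independently repeats the previous value (weight $1$) or introduces a new rank (weight $q$).
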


\begin{proof}\emph{Algebraic argument.}
    By \Cref{teor: weakly increasing Fubini},  we have
    \[L_{\FR_n^\uparrow}(q) = \sum_{k=1}^n \binom{n-1}{k-1}q^k.\]
Reindexing the sum to start at zero, factoring out $q$, and applying the binomial theorem gives
    \[L_{\FR_n^\uparrow}(q)=\sum_{k=1}^{n} \binom{n-1}{k-1}q^{k}=\sum_{k=0}^{n-1} \binom{n-1}{k}q^{k+1}=q\sum_{k=0}^{n-1} \binom{n-1}{k}q^{k}=q(q+1)^{n-1}.\qedhere\] 
  \end{proof}
  \begin{proof}[Combinatorial argument.]
 Let $\alpha = (a_1, a_2, \dots, a_n)$ be a weakly increasing Fubini ranking with $n$ competitors. 
 Since $1 = a_1 \leq a_2 \leq \cdots \leq a_n$, the first element is always lucky. For $i \geq 2$, the $i$th element is lucky precisely when $a_i > a_{i-1}$, that is, when it is the first  occurrence of a new value. Thus, each new value contributes one lucky car, while repeated values correspond to unlucky cars.  
 
 For  such a Fubini ranking with $n$ competitors, there is exactly one way to place the first lucky car (with weight $q$), and each of the remaining $n-1$ positions can either repeat the previous value (unlucky, weight 1) or introduce a new value (lucky, weight $q$). Hence the generating function is given by $
L_{\FR_n^\uparrow}(q) = q (q+1)^{n-1}$.
\end{proof} 

Given the previous results, we now determine the expected number of lucky cars in a weakly increasing
Fubini ranking.

\begin{corollary}\label{cor:expected number of lucky cars in a weakly increasing Fubini ranking}
    The expected number of lucky cars in a weakly increasing Fubini ranking with $n$ competitors is
    $$\mathbb{E}[\lucky(\alpha):  \alpha\in \FR_n^\uparrow]
    = \frac{n+1}{2}.$$
\end{corollary}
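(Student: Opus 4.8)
The plan is to compute the expectation directly from the lucky polynomial established in \Cref{coro:FRn lucky poly}. Recall that for a probability-style computation over a finite set, the expected value of $\lucky$ equals the ratio
\[
\mathbb{E}[\lucky(\alpha):\alpha\in\FR_n^\uparrow]
=\frac{\sum_{\alpha\in\FR_n^\uparrow}\lucky(\alpha)}{|\FR_n^\uparrow|}
=\frac{L_{\FR_n^\uparrow}'(1)}{L_{\FR_n^\uparrow}(1)},
\]
where $L_{\FR_n^\uparrow}'(q)$ denotes the derivative with respect to $q$. This is the standard fact that differentiating a statistic-generating polynomial and evaluating at $q=1$ yields the total of the statistic, while evaluating the polynomial itself at $q=1$ recovers the cardinality of the set.

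First I would record the denominator: by \Cref{coro:FRn lucky poly}, $L_{\FR_n^\uparrow}(1)=1\cdot(1+1)^{n-1}=2^{n-1}$, which agrees with the count $|\FR_n^\uparrow|=2^{n-1}$ from \Cref{teor: weakly increasing Fubini}. Next I would compute the numerator by differentiating $L_{\FR_n^\uparrow}(q)=q(q+1)^{n-1}$ using the product rule, obtaining
\[
L_{\FR_n^\uparrow}'(q)=(q+1)^{n-1}+q(n-1)(q+1)^{n-2}.
\]
Evaluating at $q=1$ gives $L_{\FR_n^\uparrow}'(1)=2^{n-1}+(n-1)2^{n-2}=2^{n-2}(2+(n-1))=(n+1)2^{n-2}$.

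Dividing the numerator by the denominator then yields
\[
\mathbb{E}[\lucky(\alpha):\alpha\in\FR_n^\uparrow]
=\frac{(n+1)2^{n-2}}{2^{n-1}}=\frac{n+1}{2},
\]
as claimed. The computation is entirely routine, so there is no substantial obstacle; the only point requiring mild care is the bookkeeping of the powers of $2$ when combining the two terms of the derivative at $q=1$. An alternative that I would mention as a sanity check is the combinatorial approach via linearity of expectation suggested by the second proof of \Cref{coro:FRn lucky poly}: car $1$ is always lucky, and for each $i\geq 2$ the event that car $i$ is lucky (i.e., $a_i>a_{i-1}$) carries weight $q$ versus weight $1$ for the unlucky alternative, so each such position is lucky with probability $1/2$; summing the indicator expectations gives $1+(n-1)/2=(n+1)/2$, confirming the result.
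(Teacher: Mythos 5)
Your proposal is correct, and it reaches the result by the same high-level route as the paper—computing the total number of lucky cars over $\FR_n^\uparrow$ and dividing by $|\FR_n^\uparrow|=2^{n-1}$—but the evaluation technique differs. The paper works directly with the coefficient sum: it writes the expectation as $\frac{1}{2^{n-1}}\sum_{k=1}^{n}k\binom{n-1}{k-1}$, reindexes, and invokes the classical identities $\sum_{k=0}^{n}k\binom{n}{k}=n2^{n-1}$ and $\sum_{k=0}^{n}\binom{n}{k}=2^{n}$. You instead exploit the closed product form $L_{\FR_n^\uparrow}(q)=q(q+1)^{n-1}$ from \Cref{coro:FRn lucky poly} and apply the product rule, getting $L_{\FR_n^\uparrow}'(1)=(n+1)2^{n-2}$ in one line; this bypasses the binomial-sum manipulations entirely and is arguably the cleaner bookkeeping, though it is logically equivalent (differentiating the closed form versus differentiating the expanded sum). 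Your sanity check is the one genuinely distinct argument: by linearity of expectation, car $1$ is always lucky and, for $i\geq 2$, car $i$ is lucky precisely when $a_i>a_{i-1}$, an event of probability $1/2$ under the weight-preserving encoding of $\FR_n^\uparrow$ by the $2^{n-1}$ lucky/unlucky patterns, giving $1+(n-1)/2=(n+1)/2$ with no generating function at all. That probabilistic argument does not appear in the paper and would be a legitimate standalone proof, provided you state explicitly (as the paper's combinatorial proof of \Cref{coro:FRn lucky poly} justifies) that the $2^{n-1}$ patterns of ``new value versus repeat'' are in bijection with $\FR_n^\uparrow$, so that each position $i\geq 2$ is indeed lucky in exactly half of the rankings.
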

\begin{proof}
    This proof is analogous to that of  \Cref{cor:expected number of lucky cars in a Fubini ranking}.     The expected value is given by
    \begin{align*}
\sum_{k=1}^n \frac{1}{2^{n-1}}k \binom{n-1}{k-1} &=  \frac{1}{2^{n-1}}\sum_{k=0}^{n-1}(k+1) \binom{n-1}{k} \\&= \frac{1}{2^{n-1}}\left(\sum_{k=0}^{n-1} k\binom{n-1}{k} + \sum_{k=0}^n \binom{n-1}{k} \right) \\
    &=\frac{1}{2^{n-1}}\left( (n-1)2^{n-2}+ 2^{n-1}\right)\label{eq:use identities}\\&= \frac{n+1}{2}.
    \end{align*}
In the calculations above we have used the classical binomial identities
\[
\sum_{k=0}^n k\binom{n}{k} = n2^{n-1}
\qquad\text{and}\qquad
\sum_{k=0}^n \binom{n}{k} = 2^{n}.\qedhere
\]
\end{proof}

\begin{corollary}\label{egfluckypw}
The exponential generating function for the number of weakly increasing Fubini rankings with $n$ competitors and with $k$ lucky cars is 
  \[\sum_{n\geq 0} \sum_{k\geq 0} f_\FR^\uparrow(n,k)q^k \frac{x^n}{n!}=\frac{1 + e^{(1 + q) x} q}{1+q}.
\]
\end{corollary}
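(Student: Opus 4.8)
The plan is to substitute the closed form $f_\FR^\uparrow(n,k)=\binom{n-1}{k-1}$ from \Cref{teor: weakly increasing Fubini} directly into the bivariate generating function, then collapse the inner sum over $k$ using the lucky polynomial already computed in \Cref{coro:FRn lucky poly}, and finally recognize the surviving sum over $n$ as a rescaled exponential.

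First I would separate off the $n=0$ term, which equals $f_\FR^\uparrow(0,0)=1$ and contributes the constant $1$. For each $n\geq 1$, the inner sum over $k$ is exactly the lucky polynomial $\sum_{k\geq 1}\binom{n-1}{k-1}q^k=q(q+1)^{n-1}$ established in \Cref{coro:FRn lucky poly}. This reduces the double sum to
\[
\sum_{n\geq 0}\sum_{k\geq 0} f_\FR^\uparrow(n,k)q^k\frac{x^n}{n!}=1+q\sum_{n\geq 1}(q+1)^{n-1}\frac{x^n}{n!}.
\]

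Next I would evaluate the remaining single sum. Writing $(q+1)^{n-1}=\tfrac{1}{q+1}(q+1)^n$ and pulling the factor $\tfrac{1}{q+1}$ outside, the sum becomes $\tfrac{1}{q+1}\sum_{n\geq 1}\tfrac{((q+1)x)^n}{n!}=\tfrac{1}{q+1}\bigl(e^{(q+1)x}-1\bigr)$, which is the Taylor series of the exponential with its $n=0$ term removed. Substituting back yields
\[
1+\frac{q}{q+1}\bigl(e^{(q+1)x}-1\bigr),
\]
and clearing the denominator gives $\dfrac{1+q\,e^{(1+q)x}}{1+q}$, as claimed.

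There is no serious obstacle here; the argument is a routine generating-function manipulation once \Cref{coro:FRn lucky poly} is invoked. The only point requiring a little care is the bookkeeping around the constant terms: one must check that the $n=0$ contribution and the $-1$ arising from truncating the exponential series combine correctly, so that the final closed form has the right constant behavior. Evaluating at $x=0$ returns $\tfrac{1+q}{1+q}=1$, which serves as a useful sanity check.
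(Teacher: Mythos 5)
Your proof is correct and follows essentially the same route as the paper's: both invoke \Cref{coro:FRn lucky poly} to collapse the inner sum over $k$ into $q(q+1)^{n-1}$, then sum the resulting exponential series to get $1+\tfrac{q}{q+1}\bigl(e^{(q+1)x}-1\bigr)=\tfrac{1+q\,e^{(1+q)x}}{1+q}$. Your additional bookkeeping of the $n=0$ term and the sanity check at $x=0$ are fine but not a different argument.
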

\begin{proof}
By  \Cref{coro:FRn lucky poly}, we have  
\begin{align*}
 \sum_{n\geq 0} \sum_{k\geq 0} f_\FR^\uparrow(n,k)q^k \frac{x^n}{n!}=1+\sum_{n\geq 1}q(q+1)^{n-1}\frac{x^n}{n!} &=1 + \frac{q}{q+1}(e^{(q+1)x}-1)=\frac{1 + e^{(1 + q) x} q}{1+q},
\end{align*}
where the last equality follows from the exponential generating function of the exponential function $e^x$.
\end{proof}

As a direct consequence, setting $q=1$ recovers the exponential generating function for the number of weakly increasing Fubini rankings.

\begin{corollary}\label{coroEGF1b}
The exponential generating function for the total number of weakly increasing Fubini rankings with $n$ competitors is
\[
\sum_{n\ge0} |\FR_n^\uparrow|\,\frac{x^n}{n!}
   = \frac{1}{2}(1+e^{2x}).
\]
\end{corollary}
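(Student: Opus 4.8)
The plan is to obtain this generating function as the specialization $q=1$ of the bivariate exponential generating function established in \Cref{egfluckypw}. First I would observe that setting $q=1$ collapses the inner sum over $k$: for each fixed $n$, the quantity $\sum_{k\geq 0} f_\FR^\uparrow(n,k)$ counts all weakly increasing Fubini rankings with $n$ competitors, irrespective of the number of lucky cars, which is exactly $|\FR_n^\uparrow|$. Hence the left-hand side of \Cref{egfluckypw} specializes to $\sum_{n\geq 0}|\FR_n^\uparrow|\,x^n/n!$, the series we seek.

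Next I would substitute $q=1$ into the closed form $\tfrac{1+e^{(1+q)x}q}{1+q}$ from \Cref{egfluckypw}, obtaining $\tfrac{1+e^{2x}}{2}=\tfrac{1}{2}(1+e^{2x})$, which matches the claimed right-hand side. The only point worth verifying is that the substitution is legitimate: the closed form has a removable issue only at $q=-1$, where the denominator vanishes, but at $q=1$ the expression is perfectly well defined, so the specialization causes no difficulty.

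I expect no genuine obstacle, since the statement is a routine evaluation of a formula already proved. As a consistency check I would compare Taylor coefficients: expanding $\tfrac{1}{2}(1+e^{2x}) = \tfrac{1}{2} + \tfrac{1}{2}\sum_{n\geq 0} 2^n \tfrac{x^n}{n!} = 1 + \sum_{n\geq 1} 2^{n-1}\tfrac{x^n}{n!}$ recovers the constant term $|\FR_0^\uparrow|=1$ (the empty ranking) together with $|\FR_n^\uparrow|=2^{n-1}$ for $n\geq 1$, in agreement with \Cref{teor: weakly increasing Fubini}.
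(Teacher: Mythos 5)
Your proposal is correct and matches the paper's approach exactly: the paper also obtains this corollary by setting $q=1$ in \Cref{egfluckypw}, which immediately yields $\tfrac{1}{2}(1+e^{2x})$. Your additional coefficient check against \Cref{teor: weakly increasing Fubini} is a nice sanity verification but not part of the paper's (essentially one-line) argument.
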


\subsection{Fubini Rankings with a Fixed Set of Lucky Cars}\label{sec:fixed lucky cars}
The goal of this section is to determine the cardinality of the set of Fubini rankings with $n$ competitors having a fixed set of lucky cars. 
Throughout we consider $X$ to be a subset of the set of parking functions of length $n$, and let $I \subseteq[n]$ be a fixed subset of cars. 
We say that $I$ is an \textit{$n$-admissible lucky set} (or \textit{lucky set} for short) if there exists $\alpha\in X$ such that $\Lucky(\alpha)=I$.
We then define  
\[\Lucky_X(I)\coloneq\{\alpha\in X~:~\Lucky(\alpha)=I\}.\]
In other words, $\Lucky_X(I)$ denotes the set of parking functions in $X$ of length $n$ whose lucky cars are precisely those in the specified indexing set $I$.

Harris and Martinez \cite{FixedLuckyCars}  characterized the set of parking outcomes arising from parking functions with a fixed set of lucky cars and gave a formula for the number of such functions.
This work was extended to vector parking functions in \cite{ferreri2025enumeratingvectorparkingfunctions}.

In Theorem~\ref{closed formula FR}, we provide a combinatorial formula for the number of Fubini rankings with $n$ competitors and with a fixed lucky set $I$.

\begin{theorem}\label{closed formula FR} 
Let $I=\{i_1=1, i_2, \ldots, i_k\}\subseteq [n]$ be a lucky set of $\FR_n$, such that $1=i_1<i_2<\cdots < i_k\leq n$, and define $i_{k+1}=n+1$. Then
\begin{equation*}
    |\Lucky_{\FR_n}(I)| = \prod_{\ell=1}^{k} \ell^{i_{\ell+1}-i_{\ell}}.
\end{equation*}
\end{theorem}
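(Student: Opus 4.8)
The plan is to pass through the bijection between Fubini rankings and ordered set partitions described earlier in this section (the $(B_1,\dots,B_k)$ construction, cf.~\cite{BHRRV}) and to carry out the count on the set-partition side. The first step is to translate the lucky-set condition. Under that bijection the positions carrying the $i$th smallest rank $r_i$ are exactly the elements of $B_i$, so the first (leftmost) occurrence of rank $r_i$ sits at $\min B_i$. By \Cref{teor:lucky count is equal to number of ranks} the lucky cars are precisely these first occurrences, whence $\Lucky(\alpha)=\{\min B_1,\dots,\min B_k\}$. Therefore $|\Lucky_{\FR_n}(I)|$ equals the number of ordered set partitions $(B_1,\dots,B_k)$ of $[n]$ whose set of block minima is exactly $I$.

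Next I would count these ordered set partitions using the standard sequential-insertion model: build the partition by inserting the elements $1,2,\dots,n$ one at a time in increasing order, where at each step element $j$ either (A) joins one of the existing blocks, or (B) opens a new singleton block placed in one of the available gaps of the current block sequence. This is a bijection onto all ordered set partitions of $[n]$. The key observation is that, because elements are inserted in increasing order, $j$ is the minimum of its final block if and only if $j$ was inserted via option (B): opening a block makes $j$ its smallest element permanently, whereas joining an existing block places $j$ after a strictly smaller element. Consequently the constraint ``block minima $=I$'' is equivalent to forcing option (B) exactly at the positions $j\in I$ and option (A) exactly at the positions $j\notin I$, with no additional freedom in the type of move.

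It then remains to count the choices available at each forced step, and this is the step I expect to require the most care. The crucial point is that the number $t$ of blocks already present when $j$ is processed is completely determined by $I$, namely $t=|\{\,\ell:i_\ell<j\,\}|$, independently of the earlier choices; this is exactly what turns the count into a clean product. Recalling $i_{k+1}=n+1$, for $j=i_\ell\in I$ we have $t=\ell-1$, so option (B) offers $t+1=\ell$ gaps, while for $j\notin I$ with $i_\ell<j<i_{\ell+1}$ we have $t=\ell$, so option (A) offers $\ell$ blocks to join. Thus every position in the interval $\{i_\ell,i_\ell+1,\dots,i_{\ell+1}-1\}$ contributes a factor of $\ell$, and this interval has $i_{\ell+1}-i_\ell$ elements, giving $\ell^{\,i_{\ell+1}-i_\ell}$; multiplying over $\ell=1,\dots,k$ yields $\prod_{\ell=1}^{k}\ell^{\,i_{\ell+1}-i_\ell}$, as claimed.

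Finally I would record two consistency checks that also guard the argument. First, factoring out one copy of each $\ell$ gives $\prod_{\ell}\ell^{\,i_{\ell+1}-i_\ell}=k!\prod_{\ell}\ell^{\,i_{\ell+1}-i_\ell-1}$, where the residual product counts the \emph{unordered} partitions with minima $I$ (each non-leader between $i_\ell$ and $i_{\ell+1}$ chooses one of $\ell$ blocks to join) and the extra $k!$ accounts for ordering the blocks; this matches the intuition that the lucky set depends only on the underlying partition. Second, the hypothesis $i_1=1$ is automatic, since element $1$ can only open a block and is therefore always a minimum, i.e.\ always lucky. The genuine obstacle is the bookkeeping needed to justify that insertion is a bijection and that the ``minimum iff option (B)'' equivalence holds; once these are established, the determinism of $t$ makes the enumeration immediate.
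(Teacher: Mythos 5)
Your proof is correct, and it takes a recognizably different route from the paper's. The paper works directly on the ranking entries, scanning positions left to right: the $\ell$th lucky car chooses its rank value among the $k-\ell+1$ values not yet used, and each unlucky car between $i_\ell$ and $i_{\ell+1}$ ties with one of the $\ell$ ranks already introduced, giving $\prod_{\ell=1}^{k}(k-\ell+1)\,\ell^{\,i_{\ell+1}-i_\ell-1}$, which is then simplified algebraically (using $\prod_{\ell=1}^k(k-\ell+1)=k!=\prod_{\ell=1}^k \ell$) to the stated product. You instead transport the problem across the ordered-set-partition bijection, observe that lucky cars become block minima, and count via sequential insertion with gap choices: a lucky element $i_\ell$ opens a new block in one of $\ell$ gaps, an unlucky element joins one of $\ell$ existing blocks, so every position in the $\ell$th segment contributes the factor $\ell$ uniformly and the formula $\prod_{\ell=1}^k \ell^{\,i_{\ell+1}-i_\ell}$ appears with no final simplification step. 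The treatment of unlucky cars is the same in both arguments; the real difference is in how the $k!$ worth of "lucky" choices is distributed ($k-\ell+1$ descending in the paper versus $\ell$ ascending gaps for you). What your version buys is the cleaner per-position factor and the bonus factorization $k!\prod_\ell \ell^{\,i_{\ell+1}-i_\ell-1}$ separating the unordered partition count from the block ordering; what it costs is the need to justify that the insertion process is a bijection and that "minimum iff new block" holds — facts you correctly identify and sketch (they are standard, and your increasing-order argument for the equivalence is the right one; note the same insertion idea underlies the paper's recurrence in Theorem~\ref{recSec1}). The determinism of the block count $t=|\{\ell: i_\ell<j\}|$, which you flag as the crux, is indeed what makes the product formula valid, and your justification of it (blocks are created exactly at elements of $I$ and never destroyed) is sound.
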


\begin{proof}
Recall that the number of lucky cars equals the number of distinct ranks, or equivalently,  the number of blocks in the ordered set partition under the bijection described on  Page~\pageref{BijFR}.   

Let $\alpha = (a_1, a_2, \dots, a_n)$ be a Fubini ranking with $n$ competitors such that $\Lucky(\alpha) = I$. Because $i_1 = 1$ corresponds to a lucky car, all entries $a_2, \dots, a_{i_2 - 1}$ must be equal to $a_1$; that is, all these competitors are tied with the first car, hence they are unlucky cars. The rank value $a_1$ itself can be chosen freely among the $k$ available ranks,   contributing $k  \cdot 1^{i_2 - i_1 - 1}$ possibilities.

Next, $a_{i_2}$ must be distinct from $a_1$ and can therefore be chosen in $k - 1$ ways.  Each of the subsequent entries $a_{i_2+1},\dots,a_{i_3-1}$ corresponds to an unlucky car and may take any of the two values $\{a_1,a_{i_2}\}$, giving 
$(k-1)\,2^{i_3-i_2-1}$ configurations for this segment.

In general, for $1 \leq \ell \leq k$, the element $a_{i_\ell}$ must be distinct from all previous $a_{i_j}$ with $1 \leq j < \ell$. Since $\ell - 1$ ranks have already appeared, $a_{i_\ell}$ has $k - \ell + 1$ possible values.  The entries $a_{i_\ell + 1}, \dots, a_{i_{\ell+1} - 1}$ correspond to unlucky cars,  each of which may take any of the $\ell$ ranks already introduced.   Hence this segment contributes 
$(k-\ell+1)\,\ell^{\,i_{\ell+1}-i_\ell-1}$ possibilities. 

By the multiplication principle, the total number of Fubini rankings with lucky set $I$ is
\[
|\Lucky_{\FR_n}(I)|
   = \prod_{\ell=1}^{k} (k-\ell+1)\,\ell^{\,i_{\ell+1}-i_\ell-1} = \prod_{\ell=1}^{k}\ell^{i_{\ell+1} - i_\ell},
\]
which completes the proof.
\end{proof}

For example, let $I=\{1,2,5\}$. Then, by Theorem~\ref{closed formula FR},
\[
|\Lucky_{\FR_5}(\{1,2,5\})|
   = 1^{\,2-1}\,2^{\,5-2}\,3^{\,6-5}
   = 2^3\cdot3 = 24.
\]
The $24$ Fubini rankings with $5$ competitors whose lucky cars are $\{1,2,5\}$ are 
\begin{align*}
& (1,2,2,2,5),\ (2,1,2,2,5),\ (5,2,2,2,1),\ (2,5,2,2,1),\\
& (2,4,2,4,1), \ (2,4,4,2,1), \ (4,2,4,2,1), \ (4,2,2,4,1),\\
& (1,3,3,3,2), \ (2,3,3,3,1), \ (3,1,3,3,2), \ (3,2,3,3,1),\\
& (1,3,1,3,5), \ (1,3,3,1,5), \ (3,1,1,3,5), \ (3,1,3,1,5),\\
& (1,4,1,4,3), \ (1,4,4,1,3), \ (4,1,1,4,3), \ (4,1,4,1,3),\\
& (1,4,1,1,5), \ (4,1,1,1,5), \ (1,5,1,1,4), \ (5,1,1,1,4).
\end{align*}

\subsubsection{Weakly Increasing Fubini Rankings}\label{sec:weakly increasing fubinis lucky sets}
We now enumerate weakly increasing Fubini rankings with $n$ competitors and with a specified set of lucky cars.
\begin{theorem}\label{closed formula FRW}
Let $I=\{i_1=1, i_2, \ldots, i_k\}\subseteq [n]$ be a lucky set of $\FR^\uparrow_n$, such that $1=i_1<i_2<\cdots < i_k\leq n$. Then there exists a unique $\alpha\in \FR_{n}^\uparrow$ with $\Lucky(\alpha)=I$.
\end{theorem}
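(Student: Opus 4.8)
The plan is to show that a weakly increasing Fubini ranking is completely reconstructible from its lucky set; this gives uniqueness, while existence is already guaranteed by the hypothesis that $I$ is a lucky set (though I will also exhibit the ranking explicitly). The key is that the two defining properties---weakly increasing and Fubini---pull against each other in complementary ways, leaving no freedom once $I$ is fixed.

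First I would recall the characterization of lucky cars in the weakly increasing setting from the combinatorial argument of \Cref{coro:FRn lucky poly}: for $\alpha=(a_1,\dots,a_n)\in\FR_n^\uparrow$, position $i$ is lucky precisely when $i=1$ or $a_i>a_{i-1}$, i.e., exactly at the positions where a strictly larger value first appears. Consequently, if $\Lucky(\alpha)=I=\{i_1=1<i_2<\cdots<i_k\}$, then $\alpha$ is constant on each interval $[i_j,i_{j+1})$ (with $i_{k+1}:=n+1$) and strictly increases across each boundary. Thus $\alpha$ takes some value $v_j$ on the $j$th block, with $v_1<v_2<\cdots<v_k$, and the block sizes are forced to be $c_j=i_{j+1}-i_j$, which depend only on $I$.

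Next I would invoke the defining Fubini spacing condition to pin down the values $v_j$. Since $c_j$ competitors tie at rank $v_j$, the next available rank is $v_j+c_j$; hence $v_{j+1}=v_j+c_j$ with $v_1=1$. Solving this recurrence gives $v_j=1+\sum_{\ell=1}^{j-1}c_\ell=1+(i_j-i_1)=i_j$. Therefore the value on the $j$th block equals $i_j$, which is determined by $I$, and so $\alpha$ must be the tuple
\[
\alpha=\bigl(\underbrace{i_1,\dots,i_1}_{i_2-i_1},\ \underbrace{i_2,\dots,i_2}_{i_3-i_2},\ \dots,\ \underbrace{i_k,\dots,i_k}_{n+1-i_k}\bigr).
\]
A direct check confirms that this tuple is weakly increasing, satisfies the Fubini spacing condition (so lies in $\FR_n^\uparrow$), and has lucky set exactly $I$, which simultaneously establishes existence.

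The crux is recognizing that both defining features must be used together: the weakly increasing hypothesis forces the block structure, and hence the block sizes $c_j$, from $I$ via the strict-increase characterization of lucky cars, while the Fubini ranking condition independently forces the actual rank values to equal the $i_j$. Neither property alone determines $\alpha$, but together they leave no freedom, which is precisely the assertion that the lucky-set map is a bijection on $\FR_n^\uparrow$ (consistent with the count $f_\FR^\uparrow(n,k)=\binom{n-1}{k-1}$ of \Cref{teor: weakly increasing Fubini}). I do not anticipate a serious obstacle beyond carefully combining these two constraints.
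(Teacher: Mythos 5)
Your proof is correct and takes essentially the same route as the paper: both identify the unique ranking as the block-constant tuple with value $i_j$ on positions $i_j,\dots,i_{j+1}-1$ (with $i_{k+1}=n+1$), using the fact that the weakly increasing condition forces the block structure and the Fubini condition forces the values. Your write-up spells out the uniqueness derivation more explicitly than the paper, which simply asserts that uniqueness ``follows directly from the construction,'' but the underlying argument is identical.
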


\begin{proof}
Set $i_{k+1}=n+1$ and define $\alpha=(a_1,a_2,\dots,a_n)$ by
$a_\ell=i_j$ whenever $i_j \leq \ell < i_{j+1}$. Then $\alpha$ is weakly increasing and constant on each block of consecutive indices 
$\{i_j, i_j+1, \dots, i_{j+1}-1\}$. Moreover, $a_{i_j}=i_j$ for every $j$, so the lucky positions are exactly $I$, while all other entries within each block repeat the same value and are unlucky. The uniqueness follows directly from the construction.  
\end{proof}

 For example, let $I=\{1,3,7\}$. The unique weakly increasing Fubini ranking with $8$ competitors is $\alpha=(1,1,3,3,3,3, 7,7)$. 
    
    \section{Enumeration of Unit Fubini Rankings by Number of Lucky Cars}\label{secB}

 We begin by recalling the following definition (cf.\ \cite{unit_pf,Hadaway_unit_interval}). 

\begin{definition}\label{def:unit interval pfs}
    A \textit{unit interval parking function} is a parking function with the additional restriction that a car must park in either its preferred space or the space in front of it. If a car cannot park in either of those spots, it fails to park. Consequently, a unit interval parking function can contain at most two occurrences of the same preference.
\end{definition}
Let $\UPF_n$ denote the set of unit interval parking function of length $n$. For example, $(1,1,2,3)\in \UPF_4$, where car~1 parks in spot~1, car~2 prefers spot~1 but parks one position ahead (in spot~2), car~3 prefers spot~2 but parks in spot~3, and car~4 takes spot~4.  
Note that $(1,1,3,2)$ is not a unit interval parking function, since car~4 would need to park two positions ahead of its preference.
This example illustrates that, unlike Fubini rankings, unit interval parking functions are not invariant under permutations: a permutation of a unit interval parking function is not necessarily a unit interval parking function. 
Both \cite{unit_pf,Hadaway_unit_interval} prove that $|\UPF_n| = \Fub_n$.

Let 
$f_\UPF(n,k)$ denote the number of unit interval parking functions of length $n$  with $k$ lucky cars.
Given a Fubini ranking $\alpha=(a_1,a_2,\ldots,a_n)$, if $a_{i_1},a_{i_2},\ldots,a_{i_j}$ is the full set of entries tied at rank $x$, the map $\psi:\FR_n\to \UPF_n$ defined in \cite[Lemma~2.6]{unit_pf} assigns
\[a_{i_1},a_{i_2},\ldots,a_{i_j} \mapsto
 x, x, x+1,x+2, \ldots, x+j-1.\]
  Brandt et al. \cite[Theorem 2.5]{unit_pf} established that $\psi$ is a bijection between the set of Fubini rankings with~$n$ competitors and the set of unit interval parking functions of length~$n$.  

Moreover, the map~$\psi$ ensures that the first car with preference~$x$ will be a lucky car, thereby preserving $\Lucky(\alpha) = \Lucky(\psi(\alpha))$. Consequently, the same enumerative results obtained in  \Cref{secA} apply to unit interval parking functions as well.  
In particular, the number of unit interval parking functions of length~$n$ with $k$ lucky cars is
$f_{\UPF}(n,k) = k!\, S(n,k)$.

The  map $\psi$ also preserves the increasing condition. That is, if $\alpha$ is weakly increasing, then $\psi(\alpha)$ is also weakly increasing, and conversely.  This observation, together with the results from \Cref{secA}, immediately yields the same enumeration for weakly increasing unit interval parking functions.  
For example, the number of weakly increasing unit interval parking functions of length~$n$ with $k$ lucky cars is     $f_\UPF^\uparrow(n,k) = f_\FR^\uparrow(n,k) = \binom{n-1}{k-1}$. 

Now we introduce the main object of study in this section.

\begin{definition}[Definition 3.1, \cite{boolean}]\label{def: unit fubini rankings}
    A \textit{unit Fubini ranking} is a Fubini ranking that also satisfies the necessary conditions of a unit interval parking function. Namely, a unit Fubini ranking is a Fubini ranking with at most two instances of the same rank.    We denote the set of all such rankings by $\UFR_n$ and note that  $\UFR_n = \UPF_n \cap \FR_n$.
\end{definition}

For example, $(1,1,4,3)$ is a unit Fubini ranking, since it is both a unit interval parking function and a Fubini ranking.   However, $(1,1,2,3)$ is not a unit Fubini ranking: although it is a unit interval parking function, it is not a Fubini ranking.  
Similarly, $(1,1,1,4)$ is not a unit Fubini ranking, since it is a Fubini ranking but not a unit interval parking function.

Elder, Harris, Kretschmann, and Mart\'inez Mori \cite{boolean} establish a bijection between 
the set of unit Fubini rankings with $n$ competitors and with $n-k$ lucky cars and the set of Boolean intervals of rank $k$ in the left weak Bruhat order of the symmetric group $S_n$. From this bijection, they derived the following enumeration; however, we provide an alternative proof.

\begin{theorem}[Theorems 1.1 and 1.2, \cite{boolean}]\label{thm:unit fubini with k lucky}
Let $f_\UFR(n,k)$ denote the number of unit Fubini rankings with $n$ competitors and $k$ lucky cars. Then
    \[ f_\UFR(n,k) = \frac{n!}{2^{n-k}} \binom{k}{n-k}.\]    
\end{theorem}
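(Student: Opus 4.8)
The plan is to count unit Fubini rankings through the correspondence with ordered set partitions established on Page~\pageref{BijFR}, exploiting the fact that the unit condition becomes a bound on block sizes. First I would invoke \Cref{teor:lucky count is equal to number of ranks}: a unit Fubini ranking $\alpha\in\UFR_n$ with $k$ lucky cars has exactly $k$ distinct ranks, hence exactly $k$ tie blocks. Under the bijection between Fubini rankings and ordered set partitions, $\alpha$ corresponds to an ordered set partition $(B_1,\dots,B_k)$ of $[n]$, where $|B_j|$ is the number of competitors sharing the $j$th smallest rank. The defining property of a unit Fubini ranking (\Cref{def: unit fubini rankings})---that each rank occurs at most twice---is precisely the statement that $|B_j|\in\{1,2\}$ for every $j$.

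Next I would solve for the block-size distribution. If $a$ denotes the number of blocks of size~$2$ and $b$ the number of blocks of size~$1$, then $a+b=k$ and $2a+b=n$, forcing $a=n-k$ and $b=2k-n$. Thus the unit Fubini rankings with $n$ competitors and $k$ lucky cars are in bijection with the ordered set partitions of $[n]$ into $k$ blocks, of which exactly $n-k$ are doubletons and the remaining $2k-n$ are singletons. This also gives the correct vanishing behaviour automatically: when $k<n/2$ or $k>n$ the relevant binomial coefficient below is zero, matching the absence of such rankings.

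Finally I would enumerate these ordered set partitions by a two-stage choice. First choose the sequence of block sizes $(s_1,\dots,s_k)\in\{1,2\}^k$ having exactly $n-k$ twos; there are $\binom{k}{n-k}$ such sequences. Then, for a fixed size sequence, the number of ways to distribute the $n$ labeled competitors among the ordered blocks is the multinomial coefficient $\binom{n}{s_1,\dots,s_k}=n!/\prod_{j=1}^{k}s_j!=n!/2^{n-k}$, since exactly $n-k$ of the factors $s_j!$ equal $2$ and the rest equal $1$. Because this count is independent of which positions carry the doubletons, multiplying the two stages yields
\[
f_\UFR(n,k)=\binom{k}{n-k}\cdot\frac{n!}{2^{n-k}},
\]
as claimed.

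The computation is routine; the only point requiring genuine care is the translation of \Cref{def: unit fubini rankings} into the constraint $|B_j|\in\{1,2\}$ and the ensuing determination that exactly $n-k$ blocks must be doubletons. Once that combinatorial reduction is in place, everything downstream is a standard multinomial count, so I expect no substantive obstacle beyond stating the boundary cases cleanly.
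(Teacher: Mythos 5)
Your proposal is correct and follows essentially the same route as the paper: both reduce to counting ordered set partitions of $[n]$ into $k$ blocks of size $1$ or $2$ (equivalently, compositions of $n$ into $k$ parts from $\{1,2\}$ together with a placement of the labeled competitors), obtaining the factor $\binom{k}{n-k}$ for the choice of which ranks are doubled and the multinomial $n!/2^{n-k}$ for distributing the competitors. The only cosmetic difference is that you phrase the count through the explicit ordered-set-partition bijection while the paper chooses positions rank by rank, which amounts to the same two-stage computation.
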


\begin{proof}
Let $\alpha\in \UFR_n$ be a unit Fubini ranking with $n$ competitors and with $k$ distinct ranks $1=r_1<r_2< \dots< r_k$. Let $c_i$ denote the number of cars having rank $r_i$, so that  $c_i \in \{1,2\}$ and 
$\sum_{j=1}^kc_j = n$. 

We proceed as follows. From the $n$ available positions, choose $c_1$ of them to place the entries labeled $1$; then choose $c_2$ positions to place the entries labeled $1 + c_1$; and so on. The number of ways to do this is given by the multinomial coefficient
$\binom{n}{c_1, c_2, \ldots, c_k}$.  

Let $i$ be the number of ranks that occur once, and $j$ the number of ranks that occur twice.   Then $i + j = k$ and $i + 2j = n$. Hence, the multinomial coefficient simplifies to
\[
\binom{n}{c_1, c_2, \dots, c_k} = \frac{n!}{1!^i 2!^j} = \frac{n!}{2^j}.
\]
To obtain the total number of unit Fubini rankings with~$n$ competitors and with $k$ distinct ranks,  
we sum over all compositions $(c_1, c_2, \ldots, c_k) \vDash n$ with parts in \( \{1,2\} \). The number of such compositions is given by \( \binom{k}{n-k} \) (cf.~\cite{Heubach2016}). Since $j = n - k$, we conclude that
\[
f_\UFR(n,k) = \sum_{(c_1, c_2, \ldots, c_k) \vDash n} \frac{n!}{2^j} = \frac{n!}{2^j} \binom{k}{n-k}=\frac{n!}{2^{n-k}} \binom{k}{n-k}.\qedhere
\]
\end{proof}

\begin{theorem}\label{recSec3}
For $n, k\geq 1$, we have 
\[f_\UFR(n,k)=k\,f(n-1,k-1)+\bigl(2k-n+1\bigr)\,f(n-1,k).\]
\end{theorem}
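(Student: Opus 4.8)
The plan is to give a combinatorial proof that parallels the argument for \Cref{recSec1}, working through the bijection recalled on Page~\pageref{BijFR} between Fubini rankings and ordered set partitions. Under that bijection a unit Fubini ranking with $n$ competitors and $k$ distinct ranks corresponds to an ordered set partition of $[n]$ into $k$ blocks, each of size $1$ or $2$: a block of size $2$ records a rank shared by two cars, and the unit condition forbids blocks of size $\ge 3$. By \Cref{teor:lucky count is equal to number of ranks} the number of blocks equals $\lucky(\alpha)=k$, so it suffices to establish the recurrence for the number of these size-$\le 2$ ordered set partitions, which I would do by conditioning on the block containing the largest element $n$ (equivalently, on whether the last car is tied).

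First I would treat the case in which $n$ lies in a singleton block $\{n\}$, which is exactly the case where the last car is lucky and shares its rank with no one. Deleting this block yields an ordered set partition of $[n-1]$ into $k-1$ blocks of size $\le 2$, and conversely any such partition produces a valid configuration by inserting $\{n\}$ as a new block into one of the $k$ available slots (before, between, or after the $k-1$ existing blocks). This case therefore contributes $k\,f_\UFR(n-1,k-1)$.

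Next I would treat the case in which $n$ lies in a block of size $2$, so the last car is tied with exactly one earlier car. Removing $n$ turns that block into a singleton and gives an ordered set partition of $[n-1]$ into $k$ blocks of size $\le 2$; to reverse the step one must re-attach $n$ to a block of size $1$, since size-$3$ blocks are forbidden. The apparent obstacle is that the number of singleton blocks could depend on the particular partition, which would block a clean multiplicative count. The key observation, already implicit in the proof of \Cref{thm:unit fubini with k lucky}, is that this number is in fact constant: if an ordered set partition of $[n-1]$ into $k$ blocks has $i$ singletons and $j$ doubletons, then $i+j=k$ and $i+2j=n-1$, forcing $j=(n-1)-k$ and $i=2k-n+1$. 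Hence there are always exactly $2k-n+1$ singletons to which $n$ can be attached, and this case contributes $(2k-n+1)\,f_\UFR(n-1,k)$. Adding the two cases yields the claimed recurrence, and I expect this constancy of the singleton count to be the only real subtlety of the argument.

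Finally, I would note that a routine algebraic verification is also available from the closed form $f_\UFR(n,k)=\tfrac{n!}{2^{n-k}}\binom{k}{n-k}$ of \Cref{thm:unit fubini with k lucky}. Writing $j=n-k$ and clearing the common factor $\tfrac{(n-1)!}{2^{j}}$ (and using $2k-n+1=k-j+1$) reduces the identity to $(k+j)\binom{k}{j}=(k-j)\binom{k}{j}+2j\binom{k}{j}$, which follows at once from the elementary relations $k\binom{k-1}{j}=(k-j)\binom{k}{j}$ and $(k-j+1)\binom{k}{j-1}=j\binom{k}{j}$. I would present the combinatorial argument as the main proof, since it explains the coefficients $k$ and $2k-n+1$ directly, and keep the algebraic check as a remark.
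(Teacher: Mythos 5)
Your proof is correct and follows essentially the same route as the paper: both condition on whether the last car (equivalently, the element $n$) is tied, and both hinge on the same key observation that the singleton/doubleton counts are forced by $i+j=k$, $i+2j=n-1$, yielding exactly $2k-n+1$ available singletons. Phrasing the argument through ordered set partitions rather than directly on rankings, and appending the algebraic check from the closed form, are only cosmetic additions to the paper's argument.
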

\begin{proof}
Let $\alpha$ be a unit Fubini ranking of length $n$ with $k$ lucky cars.  
We analyze the position of the last car, that is, the $n$th car.  There are two possibilities.   If the last car ties with another car, we begin with a unit Fubini ranking of size $n-1$ having $k$ distinct ranks, which can be done in $f_{\UFR}(n-1,k)$ ways.   Let $i$ denote the number of ranks that appear twice, and $j$ the number of ranks that appear once.   Since there are $k$ ranks in total, we have $i + j = k$ and $2i + j = n - 1$. Solving these equations yields $i = n - k - 1$ and $j = 2k - n + 1$.   The value $j$ represents the number of singleton ranks available, and only these can receive the new car to form a tie block.   Therefore, there are $(2k - n + 1) f_{\UFR}(n-1,k)$ rankings of this type.

Otherwise, if the last car is not tied with any other car, then the first $n-1$ cars must form a unit Fubini ranking with $k-1$ distinct ranks, which can be done in $f_{\UFR}(n-1,k-1)$ ways.  
In this case, the last car may take any of the $k$ possible rank positions, giving $k f_{\UFR}(n-1,k-1)$ additional configurations.

Adding both cases, we obtain the desired recurrence relation.
\end{proof}

Using the recurrence relation in Theorem~\ref{recSec3}, we can compute the first few values of the sequence $f_{\UFR}(n,k)$:
$$[f_\UFR(n,k) ]_{n, k\geq 0}=
\begin{pmatrix}
 1 & 0 & 0 & 0 & 0 & 0 & 0 & 0 \\
 0 & 1 & 0 & 0 & 0 & 0 & 0 & 0 \\
 0 & 1 & 2 & 0 & 0 & 0 & 0 & 0 \\
 0 & 0 &  6 & 6 & 0 & 0 & 0 & 0 \\
 0 & 0 & \framebox{\textbf{6}} & 36 & 24 & 0 & 0 & 0 \\
 0 & 0 & 0 & 90 & 240 & 120 & 0 & 0 \\
 0 & 0 & 0 & 90 & 1080 & 1800 & 720 & 0 \\
 0 & 0 & 0 & 0 & 2520 & 12600 & 15120 & 5040 \\
\end{pmatrix}.$$  
This array does not appear in the OEIS. For example, the boxed entry corresponds to $f_\UFR(4,2)=6$.  The six unit Fubini rankings with~$4$ competitors and with exactly two lucky cars are
\[
(1,1,3,3), \ (1,3,1,3), \ (1,3,3,1),  \ (3, 3, 1, 1), \ (3, 1, 3, 1), \ (3,1,1,3).
\]

We can restrict the bijection given on page~\pageref{BijFR} between Fubini rankings and ordered set partitions.

\begin{definition}
A \emph{restricted ordered set partition} of $[n]$ into $k$ blocks is a sequence of disjoint nonempty subsets $(B_1, B_2, \dots, B_k)$ whose union is $[n]$ and satisfying the condition that the size of each block is at most~$2$.
\end{definition}

Let $S_{\leq 2}(n,k)$ denote the number of restricted ordered set partitions of $[n]$ into $k$ blocks.   
This sequence has been extensively studied in the context of set partitions (see, for example, \cite{Caicedo, JungMezoRamirez2018, Moll} and references therein).  

Under the bijection, the number of blocks equals the number of distinct ranks, and hence equals the number of lucky cars.
 
\begin{corollary}\label{coro: k lucky unit fub}
    Let $f_\UFR(n,k)$ denote the number of unit Fubini rankings with $n$ competitors and $k$  lucky cars. Then $f_\UFR(n,k)=k! S_{\leq 2}(n,k)$.
\end{corollary}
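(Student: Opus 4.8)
The plan is to transfer the bijection between Fubini rankings and ordered set partitions recorded on Page~\pageref{BijFR} to the unit setting, and then pass from ordered to unordered partitions to account for the factor $k!$. First I would observe that under this bijection a rank $x$ is used by exactly $c$ competitors if and only if the block mapped to $x$ has size $c$. Hence a Fubini ranking is a \emph{unit} Fubini ranking (each rank occurring at most twice) precisely when every block of its associated ordered set partition has size at most $2$, that is, when the partition is restricted. Since the number of lucky cars equals the number of distinct ranks, which equals the number of blocks by \Cref{teor:lucky count is equal to number of ranks}, the bijection restricts to a bijection between unit Fubini rankings with $n$ competitors and $k$ lucky cars and \emph{ordered} restricted set partitions of $[n]$ into $k$ blocks of size at most two.

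Next I would account for the ordering of the blocks. The blocks of a set partition are pairwise distinct nonempty sets, so each underlying unordered restricted set partition of $[n]$ into $k$ blocks gives rise to exactly $k!$ ordered restricted set partitions, one for each permutation of its blocks. As $S_{\leq 2}(n,k)$ counts the unordered restricted set partitions of $[n]$ into $k$ blocks each of size at most two, the number of ordered ones is $k!\,S_{\leq 2}(n,k)$. Combining this with the restricted bijection of the previous paragraph yields $f_{\UFR}(n,k)=k!\,S_{\leq 2}(n,k)$, exactly as in the unrestricted analogue $f_{\FR}(n,k)=k!\,S(n,k)$.

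Finally, as an independent check that also settles the degenerate ranges, I would verify the identity algebraically against \Cref{thm:unit fubini with k lucky}. Writing $j=n-k$ for the number of size-$2$ blocks and $i=2k-n$ for the number of singletons, a direct count, dividing by the symmetries among equal-size blocks and within each pair, gives the closed form
\[
S_{\leq 2}(n,k)=\frac{n!}{(2k-n)!\,(n-k)!\,2^{\,n-k}}.
\]
Multiplying by $k!$ and recognizing $\binom{k}{n-k}=\tfrac{k!}{(n-k)!\,(2k-n)!}$ recovers $\tfrac{n!}{2^{\,n-k}}\binom{k}{n-k}$, which is precisely the formula of \Cref{thm:unit fubini with k lucky}; both sides vanish when $2k-n<0$ or $n-k<0$, so the identity holds in all ranges. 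The only genuine subtlety is the symmetry correction $\tfrac{1}{2^{\,n-k}(n-k)!}$ in the count of $S_{\leq 2}(n,k)$ (equivalently, counting perfect matchings on the paired elements as $(2j-1)!!$); once this is in hand, the remaining manipulations are routine bookkeeping.
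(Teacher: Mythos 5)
Your proof is correct and follows essentially the same route as the paper: restrict the Fubini-ranking/ordered-set-partition bijection to the unit setting (each rank used at most twice if and only if each block has size at most two), identify the number of lucky cars with the number of blocks via \Cref{teor:lucky count is equal to number of ranks}, and multiply by $k!$ to pass from unordered restricted partitions counted by $S_{\leq 2}(n,k)$ to ordered ones. Your final algebraic cross-check against \Cref{thm:unit fubini with k lucky} is additional confirmation the paper does not spell out, and it correctly forces the reading of $S_{\leq 2}(n,k)$ as the \emph{unordered} count (i.e., the restricted Stirling numbers of the second kind), which is the interpretation consistent with the stated formula and with the exponential generating function used later in \Cref{EGFUFR}, despite the paper's wording ``restricted ordered set partitions.''
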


Before we present the next result, we prove the following identity which will be useful for \Cref{cor:lucky poly for unit fubini ranking}.

\begin{lemma}\label{prop:general polys}
For $n\geq 0$,
\[\sum_{k=0}^{\lfloor\frac{n}{2}\rfloor}x^k\binom{n-k}{k}=\frac{1}{\sqrt{1+4x}}\left(\left(\frac{1 + \sqrt{1+4x}}{2}\right)^{n+1}-\left(\frac{1 - \sqrt{1+4x}}{2}\right)^{n+1}\right).\]
\end{lemma}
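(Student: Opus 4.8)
The plan is to recognize the left-hand side as a family of Fibonacci-type polynomials and to prove the claimed Binet-style closed form by showing both sides satisfy the same second-order linear recurrence with the same initial data. To this end, I would write $F_n(x)=\sum_{k=0}^{\lfloor n/2\rfloor}x^k\binom{n-k}{k}$ for the left-hand side and denote by $\alpha=\tfrac{1+\sqrt{1+4x}}{2}$ and $\beta=\tfrac{1-\sqrt{1+4x}}{2}$ the two roots of the quadratic $t^2-t-x=0$, so that $\alpha+\beta=1$, $\alpha\beta=-x$, and $\alpha-\beta=\sqrt{1+4x}$. The right-hand side is then exactly $G_n(x)=\dfrac{\alpha^{\,n+1}-\beta^{\,n+1}}{\alpha-\beta}$, and the goal becomes the identity $F_n(x)=G_n(x)$.

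First I would establish the recurrence
\[
F_n(x)=F_{n-1}(x)+x\,F_{n-2}(x),\qquad n\geq 2,
\]
by applying Pascal's rule $\binom{n-k}{k}=\binom{n-1-k}{k}+\binom{n-1-k}{k-1}$ inside the defining sum: the first resulting sum reassembles $F_{n-1}(x)$, while in the second I reindex by $j=k-1$ to extract a factor of $x$ and obtain $x\,F_{n-2}(x)$. The only care needed here is bookkeeping for the summation bounds, since the floor $\lfloor n/2\rfloor$ shifts with parity; the extreme terms vanish because the relevant binomial coefficients are zero, so the bounds cause no genuine trouble. I would also record the base cases $F_0(x)=\binom{0}{0}=1$ and $F_1(x)=\binom{1}{0}=1$.

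Next I would verify that $G_n(x)$ satisfies the identical recurrence and initial conditions. Since $\alpha$ and $\beta$ are roots of $t^2-t-x=0$, both $\alpha^{n}$ and $\beta^{n}$ obey $u_n=u_{n-1}+x\,u_{n-2}$, and hence so does their normalized difference $G_n(x)$; for the base cases one computes $G_0(x)=\tfrac{\alpha-\beta}{\alpha-\beta}=1$ and $G_1(x)=\tfrac{\alpha^2-\beta^2}{\alpha-\beta}=\alpha+\beta=1$. A straightforward induction on $n$ then forces $F_n(x)=G_n(x)$ for all $n\geq 0$, which is the assertion of the lemma after substituting $\alpha-\beta=\sqrt{1+4x}$.

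I expect no serious obstacle: the argument is entirely standard once the recurrence is in hand. The single point requiring attention is the parity-dependent upper limit in the definition of $F_n(x)$, so I would state the Pascal-rule manipulation with the convention $\binom{m}{k}=0$ for $k<0$ or $k>m$, which makes the telescoping of summation ranges automatic. As an alternative one could instead prove $\sum_{n\geq 0}F_n(x)z^n=\tfrac{1}{1-z-xz^2}$ directly from the recurrence and then extract coefficients via partial fractions over the factorization $1-z-xz^2=(1-\alpha z)(1-\beta z)$; I would keep the recurrence-plus-induction route as the primary proof since it is cleaner and self-contained.
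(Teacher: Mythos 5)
Your proposal is correct and follows essentially the same route as the paper: both derive the recurrence $S_n(x)=S_{n-1}(x)+x\,S_{n-2}(x)$ from Pascal's rule with initial values $S_0=S_1=1$, and both identify the right-hand side as the Binet-type solution attached to the characteristic equation $t^2-t-x=0$. The only cosmetic difference is that the paper solves the recurrence by writing the general solution $A\lambda_1^n+B\lambda_2^n$ and determining the constants, whereas you verify that the closed form satisfies the same recurrence and initial data and conclude by induction; these are interchangeable formulations of the same argument.
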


\begin{proof}
Let 
\[
S_n(x) \coloneqq \sum_{k=0}^{\lfloor n/2 \rfloor} x^k \binom{n-k}{k}.
\]
Using the identity  $\binom{n-k}{k} = \binom{n-k-1}{k-1} + \binom{n-k-1}{k}$ and shifting indices, we obtain the recurrence relation
\[
S_n(x) = S_{n-1}(x) + x S_{n-2}(x), \qquad (n \geq 2),
\]
with initial conditions $S_0(x) = 1$ and $S_1(x) = 1$. The characteristic equation associated with this recurrence is  $\lambda^2-\lambda-x=0$ whose roots are
\begin{align*}
\lambda_1=\frac{1 + \sqrt{1+4x}}{2}  \quad \text{and} \quad \lambda_2=\frac{1 - \sqrt{1+4x}}{2}.
\end{align*}
Hence, for some constants $A$ and $B$, the general solution is $S_n(x)=A\lambda_1^n+B\lambda_2^n$. From the initial conditions $S_0(x) = 1$ and $S_1(x) = 1$, we obtain the system
\[
A + B = 1, \qquad A \lambda_1 + B \lambda_2 = 1,
\]
whose solution is 
\begin{align*}
A= \frac{\lambda_1}{\lambda_1-\lambda_2} \quad \text{and} \quad  B=\frac{\lambda_2}{\lambda_2-\lambda_1}. 
\end{align*}

Altogether, we get
\[S_n(x)=\frac{\lambda_1^{n+1}-\lambda_2^{n+1}}{\lambda_1-\lambda_2}=\frac{1}{\sqrt{1+4x}}\left(\left(\frac{1 + \sqrt{1+4x}}{2}\right)^{n+1}-\left(\frac{1 - \sqrt{1+4x}}{2}\right)^{n+1}\right),
\]
as claimed.
\end{proof}

From \Cref{thm:unit fubini with k lucky}, we deduce the following result. 

\begin{corollary}
\label{cor:lucky poly for unit fubini ranking}
The lucky polynomial of unit Fubini rankings with $n$ competitors is given by
\begin{align*}
L_{\UFR_n}(q)&=\sum_{\alpha\in\UFR_n}q^{\lucky(\alpha)}= \frac{n!}{2^{n+1}\sqrt{q(q+2)}}\left((q+\sqrt{q(q+2)})^{n+1}-(q-\sqrt{q(q+2)})^{n+1}\right).
\end{align*}

\end{corollary}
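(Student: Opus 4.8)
The plan is to combine the exact count $f_\UFR(n,k)=\frac{n!}{2^{n-k}}\binom{k}{n-k}$ from \Cref{thm:unit fubini with k lucky} with the closed form for the Fibonacci-type polynomial established in \Cref{prop:general polys}. First I would write $L_{\UFR_n}(q)=\sum_{k}f_\UFR(n,k)q^k=n!\sum_{k}2^{k-n}\binom{k}{n-k}q^k$, and observe that $\binom{k}{n-k}$ vanishes unless $n/2\le k\le n$, so the sum is genuinely finite. Then I would perform the change of variable $m=n-k$, which sends the range to $0\le m\le\lfloor n/2\rfloor$ and turns the summand into $n!\,2^{-m}\binom{n-m}{m}q^{n-m}$.

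Next I would factor out $q^n$ to expose the structure of \Cref{prop:general polys}: since $2^{-m}q^{n-m}=q^n(2q)^{-m}$, the sum becomes $L_{\UFR_n}(q)=n!\,q^n\sum_{m=0}^{\lfloor n/2\rfloor}\left(\tfrac{1}{2q}\right)^m\binom{n-m}{m}=n!\,q^n\,S_n\!\left(\tfrac{1}{2q}\right)$, where $S_n(x)$ is the polynomial of the lemma. Applying \Cref{prop:general polys} with $x=\tfrac{1}{2q}$ gives $1+4x=\tfrac{q+2}{q}$ and hence roots $\lambda_{1,2}=\tfrac{\sqrt{q}\pm\sqrt{q+2}}{2\sqrt{q}}$, obtained by clearing the radical $\sqrt{(q+2)/q}=\sqrt{q+2}/\sqrt{q}$ inside the expressions for the roots.

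The final, and most delicate, step is the algebraic simplification. I would rewrite $\lambda_1^{n+1}-\lambda_2^{n+1}=\frac{(\sqrt{q}+\sqrt{q+2})^{n+1}-(\sqrt{q}-\sqrt{q+2})^{n+1}}{(2\sqrt{q})^{n+1}}$, substitute back, and carefully collect the powers of $q$ coming from the factor $q^n$, the prefactor $\sqrt{q/(q+2)}$ produced by $\frac{1}{\sqrt{1+4x}}$, and the denominator $(2\sqrt{q})^{n+1}=2^{n+1}q^{(n+1)/2}$. To match the stated form I would use the identity $q\pm\sqrt{q(q+2)}=\sqrt{q}\,(\sqrt{q}\pm\sqrt{q+2})$, so that $(q\pm\sqrt{q(q+2)})^{n+1}=q^{(n+1)/2}(\sqrt{q}\pm\sqrt{q+2})^{n+1}$; factoring $q^{(n+1)/2}$ through the expression and cancelling the accumulated powers of $q$ against $\sqrt{q(q+2)}$ reduces everything to $\frac{n!}{2^{n+1}\sqrt{q(q+2)}}\big((q+\sqrt{q(q+2)})^{n+1}-(q-\sqrt{q(q+2)})^{n+1}\big)$.

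The main obstacle is precisely this bookkeeping of fractional powers of $q$ and the nested radicals $\sqrt{q}$, $\sqrt{q+2}$, and $\sqrt{q(q+2)}$ without sign or exponent errors; in particular one must verify that the net power of $q$ factored out of the bracket is $q^{n/2}$ and that it combines correctly with the $\sqrt{q}$ in the denominator to yield the single radical $\sqrt{q(q+2)}$ in the claimed formula. Once the change of variable and the factorization $L_{\UFR_n}(q)=n!\,q^n\,S_n(1/(2q))$ are in place, every remaining step is a direct substitution into \Cref{prop:general polys}.
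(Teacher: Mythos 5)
Your proposal is correct and follows essentially the same route as the paper's proof: substitute the count $f_\UFR(n,k)=\frac{n!}{2^{n-k}}\binom{k}{n-k}$ from \Cref{thm:unit fubini with k lucky}, reindex via $m=n-k$, factor out $q^n$ to write the sum as $n!\,q^n S_n\!\left(\tfrac{1}{2q}\right)$, and apply \Cref{prop:general polys} with $x=\tfrac{1}{2q}$. In fact, your careful tracking of the radical simplification via $q\pm\sqrt{q(q+2)}=\sqrt{q}\,(\sqrt{q}\pm\sqrt{q+2})$ fills in the step the paper dismisses with ``by simplifying further,'' and it is carried out correctly.
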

\begin{proof}
We can rewrite the sum as follows,
\begin{align*}
L_{\UFR_n}(q)&=\sum_{\alpha\in\UFR_n}q^{\lucky(\alpha)}=
\sum_{k=0}^{n}\frac{n!}{2^{n-k}}\binom{k}{n-k}q^k\\
&=n!\sum_{k=0}^{\lfloor\frac{n}{2}\rfloor}\frac{1}{2^k}\binom{n-k}{k}q^{n-k}\\
&=q^nn!\sum_{k=0}^{\lfloor\frac{n}{2}\rfloor}\left(\frac{1}{2q}\right)^k\binom{n-k}{k}.
\end{align*}
By \Cref{prop:general polys}, with $x=\frac{1}{2q}$, we obtain
\begin{align*}
L_{\UFR_n}(q)&=q^nn!\sum_{k=0}^{\lfloor\frac{n}{2}\rfloor}\left(\frac{1}{2q}\right)^k\binom{n-k}{k} \\
&= \frac{q^nn!}{\sqrt{1+4(\frac{1}{2q})}}\left(\left(\frac{1 + \sqrt{1+4(\frac{1}{2q})}}{2}\right)^{n+1}-\left(\frac{1 - \sqrt{1+4(\frac{1}{2q})}}{2}\right)^{n+1}\right).
\end{align*}

By simplifying further, we obtain the desired result. 
\end{proof}

The first few polynomials $L_{\UFR_n}(q)$ are
\begin{align*}
&L_{\UFR_0}(q)= 1, \quad  L_{\UFR_1}(q)= q, \quad  L_{\UFR_2}(q)= 2q^2+q, \quad L_{\UFR_3}(q)=6 q^3+6 q^2,\\ &L_{\UFR_4}(q)= 24 q^4+36 q^3+6 q^2, \quad L_{\UFR_5}(q)=120 q^5+240 q^4+90 q^3.
\end{align*}

\begin{theorem}\label{teor: exp number for ufp}
The expected number of lucky cars in a unit Fubini ranking with $n$ competitors is
        \[\mathbb{E}[\lucky(\alpha) : \alpha \in \UFR_n)]= \frac{\left(\left(3n+1\right) \sqrt{3}+6 n \right) \left(1+\sqrt{3}\right)^{n}- \left(\left(3 n +1\right) \sqrt{3}-6 n \right)\left(1-\sqrt{3}\right)^{n}}{3\left(3+\sqrt{3}\right) \left(1+\sqrt{3}\right)^{n}+3 \left(3-\sqrt{3}\right) \left(1-\sqrt{3}\right)^{n}}.
        \]
        Moreover, 
         \[\mathbb{E}[\lucky(\alpha): \alpha \in \UFR_n)]\sim \frac{3 \left(2+\sqrt{3}\right) n+\sqrt{3}}{3 \left(3+\sqrt{3}\right)}.\]
\end{theorem}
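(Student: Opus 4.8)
The plan is to realize the expectation as a ratio of two evaluations of the lucky polynomial already computed in \Cref{cor:lucky poly for unit fubini ranking}. Writing $L_{\UFR_n}(q)=\sum_k f_\UFR(n,k)q^k$, we have $L_{\UFR_n}(1)=\sum_k f_\UFR(n,k)=|\UFR_n|$ and $L_{\UFR_n}'(1)=\sum_k k\,f_\UFR(n,k)$, so that
\[
\mathbb{E}[\lucky(\alpha):\alpha\in\UFR_n]=\frac{L_{\UFR_n}'(1)}{L_{\UFR_n}(1)}.
\]
Thus everything reduces to evaluating the closed form of \Cref{cor:lucky poly for unit fubini ranking} and its derivative at $q=1$. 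Setting $q=1$ gives $q(q+2)=3$, hence $\sqrt{q(q+2)}=\sqrt3$, and $L_{\UFR_n}(1)=\frac{n!}{2^{n+1}\sqrt3}\big((1+\sqrt3)^{n+1}-(1-\sqrt3)^{n+1}\big)$.

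For the numerator I would differentiate the closed form. Abbreviating $s=s(q)=\sqrt{q(q+2)}$, one has $s'=(q+1)/s$, and writing $L_{\UFR_n}(q)=\frac{n!}{2^{n+1}}\cdot\frac{(q+s)^{n+1}-(q-s)^{n+1}}{s}$, the quotient and chain rules express $L_{\UFR_n}'(q)$ in terms of $(q\pm s)^n$, $(q\pm s)^{n+1}$, and $s'$. At $q=1$ we have $s=\sqrt3$ and $s'=2/\sqrt3$, so the derivative collapses to a combination of $(1\pm\sqrt3)^n$ and $(1\pm\sqrt3)^{n+1}$ with explicit coefficients rational in $\sqrt3$. (Alternatively, using $f_\UFR(n,k)=\frac{n!}{2^{n-k}}\binom{k}{n-k}$ from \Cref{thm:unit fubini with k lucky} and the substitution $j=n-k$, both sums become evaluations of $S_n$ from \Cref{prop:general polys}: one gets $L_{\UFR_n}(1)=n!\,S_n(1/2)$ and $L_{\UFR_n}'(1)=n!\big(nS_n(1/2)-\tfrac12 S_n'(1/2)\big)$, whence $\mathbb{E}=n-\frac{S_n'(1/2)}{2S_n(1/2)}$, and only $S_n'(1/2)$ remains to compute.)

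The remaining work is to form the quotient $L_{\UFR_n}'(1)/L_{\UFR_n}(1)$ and simplify. After cancelling the common factor $n!/2^{n+1}$ and using $(1\pm\sqrt3)^{n+1}=(1\pm\sqrt3)(1\pm\sqrt3)^n$, the ratio becomes a quotient of linear-in-$n$ combinations of $(1+\sqrt3)^n$ and $(1-\sqrt3)^n$; multiplying numerator and denominator by $\sqrt3$ (equivalently, using $3\pm\sqrt3=\pm\sqrt3(1\pm\sqrt3)$) puts it into exactly the stated form. I expect this bookkeeping---tracking the coefficients of $(1\pm\sqrt3)^n$ through the quotient rule and matching them to $(3n+1)\sqrt3\pm6n$ in the numerator and $3(3\pm\sqrt3)$ in the denominator---to be the main, purely computational, obstacle; there is no conceptual difficulty.

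Finally, for the asymptotics I would use that $|1-\sqrt3|=\sqrt3-1<1<1+\sqrt3$, so the terms $(1-\sqrt3)^n$ are exponentially dominated by $(1+\sqrt3)^n$. Dividing numerator and denominator by $(1+\sqrt3)^n$ and discarding the vanishing contributions proportional to $\big((1-\sqrt3)/(1+\sqrt3)\big)^n$ leaves
\[
\mathbb{E}[\lucky(\alpha):\alpha\in\UFR_n]\sim\frac{(3n+1)\sqrt3+6n}{3(3+\sqrt3)}=\frac{3(2+\sqrt3)n+\sqrt3}{3(3+\sqrt3)},
\]
which is the claimed estimate; in fact the error is exponentially small in $n$.
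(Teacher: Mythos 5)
Your proposal is correct, and the computation you outline does go through: with $s=\sqrt{q(q+2)}$ one has $s'=(q+1)/s$, and differentiating the closed form of \Cref{cor:lucky poly for unit fubini ranking} and evaluating at $q=1$ (where $s=\sqrt3$, $s'=2/\sqrt3$) gives $L'_{\UFR_n}(1)=\tfrac{n!}{9\cdot 2^{n+1}}\bigl(((3n+1)\sqrt3+6n)(1+\sqrt3)^n-((3n+1)\sqrt3-6n)(1-\sqrt3)^n\bigr)$, while $L_{\UFR_n}(1)=\tfrac{n!}{3\cdot 2^{n+1}}\bigl((3+\sqrt3)(1+\sqrt3)^n+(3-\sqrt3)(1-\sqrt3)^n\bigr)$; the quotient is exactly the stated formula, and your asymptotic step is sound since $|1-\sqrt3|<1+\sqrt3$ makes the $(1-\sqrt3)^n$ contributions (even with the linear-in-$n$ coefficient) vanish in the ratio. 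However, this is a genuinely different route from the paper's. The paper also starts from $\mathbb{E}=L'_{\UFR_n}(1)/L_{\UFR_n}(1)$, but it never touches the closed form of the lucky polynomial: instead it writes $L_{\UFR_n}(1)=\tfrac{n!}{2^n}f(n)$ and $L'_{\UFR_n}(1)=\tfrac{n!}{2^n}g(n)$ with the hypergeometric sums $f(n)=\sum_k 2^k\binom{k}{n-k}$ and $g(n)=\sum_k 2^k k\binom{k}{n-k}$, applies Zeilberger's creative telescoping to produce recurrences (a constant-coefficient one for $f$, a three-term recurrence with polynomial coefficients for $g$), solves those recurrences to get closed forms, and then takes the quotient. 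What each approach buys: yours is self-contained elementary calculus once \Cref{cor:lucky poly for unit fubini ranking} is in hand, with no computer algebra and no recurrence solving, at the cost of heavier coefficient bookkeeping; the paper's version works directly from the summand of \Cref{thm:unit fubini with k lucky}, produces independent closed forms for $f(n)$ and $g(n)$ along the way, and deliberately showcases the creative-telescoping technique advertised in the abstract (the paper itself notes before \Cref{lucpolsec3} that this theorem is its Zeilberger example). Your parenthetical reduction $\mathbb{E}=n-\tfrac{S_n'(1/2)}{2S_n(1/2)}$ via \Cref{prop:general polys} is also correct and is perhaps the cleanest organization of the calculus route.
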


\begin{proof}
We proceed by evaluating $L_{\UFR_n}(1)$, and taking the derivative of $L_{\UFR_n}(q)$ with respect to $q$ and evaluating at $q=1$.

Observe that
\begin{align}
    L_{\UFR_n}(1)&=\frac{n!}{2^n}\displaystyle\sum_{k=1}^n  2^k\binom{k}{n-k} \label{Lat1}\intertext{and} 
    \left.\left( \frac{\partial}{\partial q} L_{\UFR_n}(q) \right)\right|_{q=1} &= \frac{n!}{2^n}\displaystyle\sum_{k=1}^n 2^kk\binom{k}{n-k}.    \label{Lprimeat1}
\end{align}
Next, we find closed formulas for the summations in \eqref{Lat1}  and \eqref{Lprimeat1}.

Define 
\begin{align}
    f(n)&\coloneq\displaystyle\sum_{k=1}^n 2^k\binom{k}{n-k} \label{sum1} \intertext{ and }
    g(n)&\coloneq \displaystyle\sum_{k=1}^n 2^kk\binom{k}{n-k} \label{sum2}.
\end{align}
Since both summands are proper hypergeometric terms, we apply Zeilberger’s creative telescoping method \cite{PWZ}. 
Denote the summands on the right-hand side of \Cref{sum1} by $F_1(n,k)$, that is, 
$$F_1(n,k)=2^k\binom{k}{n-k}.$$
By Zeilberger’s algorithm, $F_1(n,k)$ satisfies the recurrence relation
\begin{align}\label{EcDoron}
F_1(n+2,k)-2F_1(n+1,k)-2F_1(n,k)=G_1(n,k+1)-G_1(n,k),
\end{align}
with  certificate 
$$R_1(n,k)=-\frac{(n-2 k) (-2 k+n+1)}{(-k+n+1) (-k+n+2)}.$$
That is, $R_1(n,k) = F_1(n,k)/G_1(n,k)$ is a rational function in both variables.  Summing both sides of \Cref{EcDoron} over~$k$ gives  
\begin{align*}
f(n+2)-2f(n+1)-2f(n)=0, \quad \mbox{with }f(1)=2\mbox{ and } f(2)=6.
\end{align*}
Similarly, let $F_2(n,k)$ denote the summands on the right-hand side of \Cref{sum2}, that is, $$F_2(n,k)=2^kk\binom{k}{n-k}.$$
By Zeilberger’s algorithm, we have that $F_2(n,k)$ satisfies the relation
\begin{multline}\label{EcDoron2}
(-2 + n) (1 + n)F_2(n+2,k)-2 (-5 + n^2)F_2(n+1,k)\\-2(-1 + n) (2 + n)F_2(n,k)=G_2(n,k+1)-G_2(n,k),
\end{multline}
with  certificate 
$$R_2(n,k)=\frac{(n-2 k) (-2 k+n+1) \left(k \left(n^2-n-4\right)+2 (n+2)\right)}{k (-k+n+1)
   (-k+n+2)}.$$
Summing both sides of \Cref{EcDoron2} over~$k$ yields
\begin{align*} 
2(n + 2)(n - 1)g(n) + (2n^2 - 10)g(n+1) - (n - 2)(n + 1)g(n+2)=0,
\end{align*}
 with $g(1)=2$ and $ g(2)=10$.
Solving these recurrences, for $n\geq 1$, we obtain 
\begin{align*}
f(n)&=\frac{\left(3+\sqrt{3}\right) \left(\sqrt{3}+1\right)^{n}+ \left(3-\sqrt{3}\right) \left(1-\sqrt{3}\right)^{n}}{6}
\intertext{and}
g(n)&=\frac{\left(\left(3 n +1\right) \sqrt{3}+6 n \right) \left(1+\sqrt{3}\right)^{n}- \left(\left(3 n +1\right) \sqrt{3}-6 n \right)\left(1-\sqrt{3}\right)^{n}}{18}.
\end{align*}
Combining these closed forms, we have
\[
   \frac{L'_{\UFR_n}(1)}{L_{\UFR_n}(1)} = \frac{g(n)}{f(n)},
\]
which gives the stated expression for the expected number of lucky cars. 

Finally, we can rewrite the expected value as
\[
   \frac{3\sqrt{3}\,n + 6n + \sqrt{3}}{3(3+\sqrt{3})}
   - \frac{2\sqrt{3}\,(n+1)(1-\sqrt{3})^n}
   {(3+\sqrt{3})\bigl(3(1-\sqrt{3})^n - \sqrt{3}(1-\sqrt{3})^n 
      + 3(1+\sqrt{3})^n + \sqrt{3}(1+\sqrt{3})^n\bigr)}.
\]
The second term tends to zero as $n \to \infty$, and therefore
\[
   \frac{L'_{\UFR_n}(1)}{L_{\UFR_n}(1)} = \frac{g(n)}{f(n)} \sim   \frac{3(2+\sqrt{3})\,n + \sqrt{3}}{3(3+\sqrt{3})}.\qedhere
\]
\end{proof}

Next, we give a closed formula for the exponential generating function of $f_{\UFR}(n,k)$.

\begin{theorem}\label{EGFUFR}
The exponential generating function for the number of unit Fubini rankings with $n$ competitors and with $k$ lucky cars is 
  \[\sum_{n\geq 0} \sum_{k\geq 0} f_\UFR(n,k)q^k \frac{x^n}{n!}=\frac{1}{1-q(x+\frac{x^2}{2})}.
\]
\end{theorem}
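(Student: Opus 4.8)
The plan is to follow the same strategy as in the proof of \Cref{egfluckyp}, replacing the ordinary Stirling numbers of the second kind by their restricted analogues $S_{\leq 2}(n,k)$. By \Cref{coro: k lucky unit fub} we have $f_\UFR(n,k) = k!\,S_{\leq 2}(n,k)$, so it suffices to identify the exponential generating function of $S_{\leq 2}(n,k)$ in the variable $n$, weight it by $q^k$, and sum over $k$.

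First I would establish the single-variable identity
\[
\sum_{n\geq 0} S_{\leq 2}(n,k)\,\frac{x^n}{n!} = \frac{1}{k!}\left(x + \frac{x^2}{2}\right)^k,
\]
which is the restricted counterpart of the Stirling exponential generating function used in \Cref{egfluckyp}. This follows from the exponential formula: each block of a partition counted by $S_{\leq 2}(n,k)$ is a set of size $1$ or $2$, so the exponential generating function of a single admissible block is $x + x^2/2!$, and the exponential generating function of an unordered family of exactly $k$ such blocks is $\frac{1}{k!}\left(x+\frac{x^2}{2}\right)^k$, where the factor $1/k!$ records that the blocks are unordered.

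With this in hand the computation proceeds exactly as in \Cref{egfluckyp}:
\begin{align*}
\sum_{n\geq 0}\sum_{k\geq 0} f_\UFR(n,k)\,q^k\,\frac{x^n}{n!}
&= \sum_{k\geq 0} k!\,q^k\left(\sum_{n\geq 0} S_{\leq 2}(n,k)\,\frac{x^n}{n!}\right)
= \sum_{k\geq 0} k!\,q^k\cdot\frac{1}{k!}\left(x+\frac{x^2}{2}\right)^k\\
&= \sum_{k\geq 0} q^k\left(x+\frac{x^2}{2}\right)^k
= \frac{1}{1-q\left(x+\frac{x^2}{2}\right)},
\end{align*}
where the final equality is the geometric series, valid as a formal power series identity because $x+x^2/2$ has zero constant term.

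The computation is entirely parallel to that of \Cref{egfluckyp}, so the only genuinely new ingredient is the block generating function identity for $S_{\leq 2}(n,k)$; this is the step I would write out with care. The main (and minor) obstacle is to justify the exponential-formula step cleanly, namely that restricting all block sizes to $\{1,2\}$ amounts precisely to restricting the atom class to $x + x^2/2$, and that the $k!$ arising from \Cref{coro: k lucky unit fub} exactly cancels the $1/k!$ of the exponential formula so that the sum collapses to a geometric series.
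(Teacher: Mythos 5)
Your proposal is correct and follows essentially the same route as the paper: both invoke \Cref{coro: k lucky unit fub} to write $f_\UFR(n,k)=k!\,S_{\leq 2}(n,k)$, use the identity $\sum_{n\geq 0} S_{\leq 2}(n,k)\frac{x^n}{n!}=\frac{1}{k!}\left(x+\frac{x^2}{2}\right)^k$, and then collapse the double sum to a geometric series. The only difference is that the paper cites this restricted Stirling generating function from the literature, whereas you derive it from the exponential formula; that derivation is valid and self-contained, but the overall argument is the same.
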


\begin{proof}
By  \Cref{coro: k lucky unit fub}, we have   $f_{\UFR}(n,k)=k!S_{\leq 2}(n,k)$. The restricted Stirling numbers of the second kind satisfy the exponential generating function (cf.~\cite{JungMezoRamirez2018}) 
\[\sum_{n\geq 0} S_{\leq 2}(n,k)\frac{x^n}{n!}=\frac{1}{k!}\left(x+\frac{x^2}{2}\right)^k.\]
Therefore, 
\begin{align*}
   \sum_{n\geq 0}\sum_{k\geq 0} k!S_{\leq 2}(n,k)q^k\frac{x^n}{n!} &= \sum_{k\geq 0}k!q^k\left(\sum_{n\geq 0}S_{\leq 2}(n,k)\frac{x^n}{n!}\right)
    =\sum_{k\geq 0} k!q^k\frac{1}{k!}\left(x+\frac{x^2}{2}\right)^k\\
    &=\sum_{k\geq 0}\left(x+\frac{x^2}{2}\right)^kq^k
    =\frac{1}{1-q(x+\frac{x^2}{2})}.\qedhere
\end{align*}
\end{proof}

As a direct consequence, setting $q=1$ we obtain  the exponential generating function for the total number of unit Fubini rankings.

\begin{corollary}\label{corosumF}
The exponential generating function for the total number of unit Fubini rankings with~$n$ competitors is
\[
\sum_{n\geq 0} |\UFR_n|\frac{x^n}{n!}
   = \frac{1}{1-x-\frac{x^2}{2}}.
\]
\end{corollary}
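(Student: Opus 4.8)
The plan is to obtain \Cref{corosumF} as the $q=1$ specialization of \Cref{EGFUFR}. Recall that \Cref{EGFUFR} provides
\[
\sum_{n\geq 0}\sum_{k\geq 0} f_\UFR(n,k)\,q^k\,\frac{x^n}{n!}=\frac{1}{1-q\left(x+\frac{x^2}{2}\right)}.
\]
First I would substitute $q=1$ on both sides. On the right-hand side this is a direct simplification, giving $\frac{1}{1-\left(x+\frac{x^2}{2}\right)}=\frac{1}{1-x-\frac{x^2}{2}}$, which is precisely the claimed closed form.

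The only step that is not a purely formal substitution concerns the left-hand side. Setting $q=1$ collapses the factor $q^k$ and leaves the inner sum $\sum_{k\geq 0} f_\UFR(n,k)$. Here I would invoke the defining meaning of $f_\UFR(n,k)$, namely the number of unit Fubini rankings with $n$ competitors whose lucky-car count is exactly $k$. Since every $\alpha\in\UFR_n$ has a well-defined number of lucky cars lying in $\{0,1,\dots,n\}$, these quantities partition $\UFR_n$, so that $\sum_{k\geq 0} f_\UFR(n,k)=|\UFR_n|$. Substituting this identity turns the left-hand side into $\sum_{n\geq 0}|\UFR_n|\frac{x^n}{n!}$, which completes the argument.

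There is no genuine obstacle in this proof: it is a clean evaluation of an already-established bivariate generating function at $q=1$, and the partition identity $\sum_k f_\UFR(n,k)=|\UFR_n|$ is immediate from the definition of $f_\UFR(n,k)$. As an optional consistency check, one may extract from the closed form the recurrence $|\UFR_n|=n\,|\UFR_{n-1}|+\binom{n}{2}\,|\UFR_{n-2}|$ and verify it against the row sums of the table displayed after \Cref{recSec3}.
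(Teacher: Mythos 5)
Your proposal is correct and matches the paper's own argument: the paper likewise obtains \Cref{corosumF} by setting $q=1$ in \Cref{EGFUFR}, with the identity $\sum_{k\geq 0} f_\UFR(n,k)=|\UFR_n|$ implicit in that specialization. Your added consistency check via the recurrence $|\UFR_n|=n\,|\UFR_{n-1}|+\binom{n}{2}\,|\UFR_{n-2}|$ is a nice (and valid) extra, but not part of the paper's proof.
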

From the generating function in \Cref{corosumF} we obtain that the first few values of the sequence $ |\UFR_n|$ which are  (OEIS \seqnum{A080599} \cite{OEIS})
\[1, \quad  1, \quad  3, \quad  12, \quad  66, \quad  450, \quad  3690, \quad  35280, \quad  385560, \quad  4740120, \dots \]

\subsection{Weakly Increasing Unit Fubini Rankings}\label{sec:w_inc_ufr}
Let  $\UFR_n^\uparrow$ be the set of weakly increasing unit Fubini rankings with $n$ competitors. The following result was proved in \cite[Theorem~3.8]{boolean}. We give an alternative proof by establishing a bijection with compositions of $n$ with parts equal to $1$ or $2$.
\begin{theorem}[Theorem 3.8, \cite{boolean}]\label{numWIFR}
    The number of weakly increasing unit Fubini rankings with $n$ competitors is
$|\UFR_n^\uparrow| = F_{n+1}$,
where $F_{n+1}$ denotes the $(n+1)$th Fibonacci number, defined by the recursion $F_{n+1} = F_n + F_{n-1}$ with initial values $F_1 = F_2 = 1$.
\end{theorem}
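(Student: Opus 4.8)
The plan is to establish the claimed bijection explicitly between weakly increasing unit Fubini rankings with $n$ competitors and compositions of $n$ into parts equal to $1$ or $2$, and then to invoke the standard fact that the number of such compositions is $F_{n+1}$. The key structural observation, already developed in the proof of \Cref{thm:unit fubini with k lucky}, is that a unit Fubini ranking is divided into consecutive tie blocks, each of size $c_i \in \{1,2\}$, with $\sum_{i=1}^k c_i = n$. For a \emph{weakly increasing} unit Fubini ranking, the weakly increasing condition forces the blocks to appear in order of increasing rank, so the entire ranking is determined by the ordered sequence of block sizes $(c_1, c_2, \ldots, c_k)$; this is exactly a composition of $n$ with each part in $\{1,2\}$.

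First I would make the correspondence precise. Given $\alpha \in \UFR_n^\uparrow$ with distinct ranks $1 = r_1 < r_2 < \cdots < r_k$ and block sizes $c_i$, I would send $\alpha$ to the composition $(c_1, c_2, \ldots, c_k) \vDash n$. Because $\alpha$ is weakly increasing and the ranks are forced by the Fubini condition (each rank equals $1 + \sum_{\ell < i} c_\ell$), the entries of $\alpha$ are completely recovered from the block sizes, so the map is a well-defined bijection onto compositions of $n$ with parts in $\{1,2\}$. The inverse takes such a composition and writes down the corresponding weakly increasing sequence of ranks with the prescribed multiplicities; the unit condition $c_i \le 2$ is exactly what guarantees the resulting tuple lies in $\UFR_n$, and weak monotonicity is automatic.

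Next I would count the compositions of $n$ into parts from $\{1,2\}$. Letting $a_n$ denote this number, a composition either starts with a part $1$ (leaving a composition of $n-1$) or a part $2$ (leaving a composition of $n-2$), so $a_n = a_{n-1} + a_{n-2}$, with $a_1 = 1$ and $a_2 = 2$. This is the Fibonacci recurrence, and matching the initial conditions gives $a_n = F_{n+1}$. Combining this with the bijection yields $|\UFR_n^\uparrow| = F_{n+1}$. I would note in passing that this also follows from summing \Cref{thm:unit fubini with k lucky} over $k$, or from the number of parts-in-$\{1,2\}$ compositions already cited as $\binom{k}{n-k}$ in that proof, via $\sum_k \binom{k}{n-k} = F_{n+1}$.

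The argument is essentially routine once the bijection is set up; there is no serious obstacle. The only point requiring care is verifying that the weakly increasing hypothesis genuinely collapses the $k!$ orderings of tie blocks down to a single ordering, so that block sizes alone determine $\alpha$ — this is the crux distinguishing $\UFR_n^\uparrow$ from $\UFR_n$ and must be stated cleanly. I would therefore spend most of the writing on confirming that the rank values are forced by the Fubini condition together with monotonicity, and that parts exceeding $2$ are excluded precisely by the unit interval restriction.
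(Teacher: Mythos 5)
Your proposal is correct and follows essentially the same route as the paper: both establish the bijection between weakly increasing unit Fubini rankings and compositions of $n$ with parts in $\{1,2\}$, then invoke the Fibonacci count of such compositions. Your write-up is somewhat more detailed (explicitly verifying that monotonicity plus the Fubini condition forces the ranks, and deriving the recurrence $a_n=a_{n-1}+a_{n-2}$), but the underlying argument is identical to the paper's.
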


\begin{proof}
Any weakly increasing unit Fubini ranking $\alpha$ with $n$ competitors corresponds to a unique composition of $n$ with parts equal to 1 or 2. Indeed, since each rank in $\alpha$ appears either once or twice consecutively, we can associate a part of size 1 to each isolated rank and a part of size 2 to each pair of equal consecutive ranks.  This algorithm defines a bijection between weakly increasing unit Fubini rankings and compositions of $n$ with parts in $\{1,2\}$. For example, the image of $(1,1,3,4,4,6,6,7)$ is the composition $(2,1,2,2,1)$. 
\end{proof}
We now give an independent proof of the following result. 

\begin{theorem}[Theorem 1.3, \cite{boolean}]\label{numWIFRb}
Let $f^\uparrow_{\UFR}(n,k)$ denote the number of weakly increasing unit Fubini rankings with $n$ competitors and  $k$ lucky cars. Then     $$f_\UFR^\uparrow(n,k) = \binom{k}{n-k}.$$
\end{theorem}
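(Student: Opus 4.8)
The plan is to leverage the bijection established in the proof of \Cref{numWIFR} between weakly increasing unit Fubini rankings with $n$ competitors and compositions of $n$ with parts in $\{1,2\}$. Recall that under that bijection an isolated rank corresponds to a part of size $1$, while a pair of equal consecutive ranks corresponds to a part of size $2$. Consequently, the number of distinct ranks appearing in a weakly increasing unit Fubini ranking $\alpha$ equals the number of parts in the associated composition. First I would record this correspondence explicitly, so that the statistic ``number of distinct ranks'' on $\UFR_n^\uparrow$ is transported to the statistic ``number of parts'' on $\{1,2\}$-compositions of $n$.

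Next I would invoke \Cref{teor:lucky count is equal to number of ranks}, which guarantees that in any Fubini ranking the number of lucky cars equals the number of distinct ranks, and in particular this holds for elements of $\UFR_n^\uparrow$. Combining this with the observation above, a weakly increasing unit Fubini ranking with $n$ competitors and exactly $k$ lucky cars corresponds precisely to a composition of $n$ into exactly $k$ parts, each part equal to $1$ or $2$. Thus $f_\UFR^\uparrow(n,k)$ is exactly the number of such $\{1,2\}$-compositions of $n$ into $k$ parts.

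Finally I would enumerate these compositions directly. Writing $j$ for the number of parts equal to $2$ and $i$ for the number equal to $1$, the constraints $i+j=k$ and $i+2j=n$ force $j=n-k$ (and $i=2k-n$). A composition is then determined by selecting which $j=n-k$ of the $k$ ordered positions carry a $2$, giving $\binom{k}{n-k}$ possibilities. This is the same count of $\{1,2\}$-compositions already used in the proof of \Cref{thm:unit fubini with k lucky}, so I may simply cite it once the identification is in place. The only substantive content is the chain of identifications $\lucky(\alpha)=\#\{\text{distinct ranks}\}=\#\{\text{parts}\}=k$ furnished by the two cited results; after that the binomial count is immediate, and I anticipate no genuine obstacle.
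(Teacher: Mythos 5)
Your proposal is correct and is essentially the same argument as the paper's: the paper also solves $i+j=k$, $2i+j=n$ to get $n-k$ doubled ranks and then counts the $\binom{k}{n-k}$ choices of which ranks repeat, noting the weakly increasing condition makes the ranking unique. Your routing through the $\{1,2\}$-composition bijection from \Cref{numWIFR} and the explicit citation of \Cref{teor:lucky count is equal to number of ranks} just makes explicit what the paper's direct count uses implicitly; the combinatorial core is identical.
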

\begin{proof}
In a unit Fubini ranking with $n$ competitors and with $k$ distinct ranks, exactly $n-k$ ranks must appear twice and the remaining $2k-n$ appear once. Let $i$ be the number of distinct ranks  that appear twice and $j$ the number of distinct ranks that appear once. Since there are $k$ distinct ranks in total, we have $i+j=k$. On the other hand, we have the relation $2i+j=n$. Subtracting these equations yields $i=n-k$ and hence $j=2k-n$. Choosing which of the $k$ ranks are repeated determines the ranking uniquely, and there are $\binom{k}{n-k}$ ways to do so.
\end{proof}

\begin{remark}\label{recWFRI}
It is immediate from Theorem~\ref{numWIFRb} that the numbers 
$f_\UFR^{\uparrow}(n,k)$ satisfy the recurrence relation
\[
f_\UFR^\uparrow(n,k)
= f_\UFR^\uparrow(n-1,k-1)\;+\;f_\UFR^\uparrow(n-2,k-1),
\qquad(n\geq2, k\geq 1)
\]
\end{remark}

The first few values of the sequence $f_\UFR^\uparrow(n,k)$ are shown below:
$$[f_\UFR^\uparrow(n,k) ]_{n, k\geq 0}=
\begin{pmatrix}
 1 & 0 & 0 & 0 & 0 & 0 & 0 & 0 \\
 0 & 1 & 0 & 0 & 0 & 0 & 0 & 0 \\
 0 & 1 & 1 & 0 & 0 & 0 & 0 & 0 \\
 0 & 0 & 2 & 1 & 0 & 0 & 0 & 0 \\
 0 & 0 & 1 & \framebox{\textbf{3}} & 1 & 0 & 0 & 0 \\
 0 & 0 & 0 & 3 & 4 & 1 & 0 & 0 \\
 0 & 0 & 0 & 1 & 6 & 5 & 1 & 0 \\
 0 & 0 & 0 & 0 & 4 & 10 & 6 & 1 \\
\end{pmatrix}.$$  
This array corresponds with OEIS \seqnum{A030528} \cite{OEIS}. For example, the boxed entry corresponds to $f_\UFR(4,3)=3$.  The three weakly increasing unit Fubini rankings with~$4$ competitors and with exactly three lucky cars are
\[
(1,1,3,4), \quad  (1,2,2,4),  \quad (1, 2, 3, 3).
\]

In the following theorem, we derive an explicit expression for the lucky polynomials of weakly increasing unit Fubini rankings:   
\[
L_{\UFR_n^\uparrow}(q) = \sum_{k=0}^n \binom{k}{n-k} q^k,
\]
based on a decomposition of the elements in $\UFR_n^\uparrow$.  Although it is also possible to obtain this formula by applying Zeilberger’s algorithm, as in Theorem~\ref{teor: exp number for ufp}, we prefer to present here a direct combinatorial argument that enriches the exposition.

\begin{theorem}\label{lucpolsec3}
For $n\geq 1$ the lucky polynomial of weakly increasing unit Fubini rankings with $n$ competitors  is given by
\begin{align*}
L_{\UFR_n^\uparrow}(q)&=\sum_{\alpha\in\UFR_n^\uparrow}q^{\lucky(\alpha)}\\
&=\frac{\left(q +\sqrt{q \left(q +4\right)}\right) \left(\frac{2q}{\sqrt{q \left(q +4\right)}-q}\right)^{n}- \left(q-\sqrt{q \left(q +4\right)}\right)\left(\frac{-2q}{q +\sqrt{q \left(q +4\right)}}\right)^{n}}{2 \left(\sqrt{q \left(q +4\right)}\right)},
\end{align*}
  where $L_{\UFR_0^\uparrow}(q)=0$.
\end{theorem}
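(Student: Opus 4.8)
The plan is to start from the closed formula $L_{\UFR_n^\uparrow}(q)=\sum_{k=0}^n\binom{k}{n-k}q^k$ established in Theorem~\ref{numWIFRb} and recognize the sum as a weighted Fibonacci-type polynomial amenable to the same technique used in Lemma~\ref{prop:general polys}. The first step is to reindex. Writing $i=n-k$ so that $k=n-i$, the summand becomes $\binom{n-i}{i}q^{n-i}$, and factoring out $q^n$ produces $L_{\UFR_n^\uparrow}(q)=q^n\sum_{i=0}^{\lfloor n/2\rfloor}\left(\tfrac{1}{q}\right)^i\binom{n-i}{i}$. This is precisely $q^n S_n(x)$ evaluated at $x=1/q$, where $S_n(x)$ is the polynomial treated in Lemma~\ref{prop:general polys}.

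The second step is to invoke Lemma~\ref{prop:general polys} directly with $x=1/q$. Substituting gives
\[
L_{\UFR_n^\uparrow}(q)=\frac{q^n}{\sqrt{1+\frac{4}{q}}}\left(\left(\frac{1+\sqrt{1+\frac{4}{q}}}{2}\right)^{n+1}-\left(\frac{1-\sqrt{1+\frac{4}{q}}}{2}\right)^{n+1}\right).
\]
The third and main step is the algebraic simplification. I would first clear the nested radical by writing $\sqrt{1+4/q}=\sqrt{q(q+4)}/q$ (valid as a formal manipulation over the relevant domain), so that the prefactor $1/\sqrt{1+4/q}$ becomes $q/\sqrt{q(q+4)}$. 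The two bracketed bases become $(q\pm\sqrt{q(q+4)})/(2q)$. The key identity to carry the $q^n$ factor inside cleanly is that, since these bases are the roots of $q\lambda^2-q\lambda-1=0$, their product equals $-1/q$; this lets me rewrite each base raised to the $(n+1)$ power and absorb the external $q^n$ so as to match the target form, in which the exponents $\left(\tfrac{2q}{\sqrt{q(q+4)}-q}\right)^n$ and $\left(\tfrac{-2q}{q+\sqrt{q(q+4)}}\right)^n$ appear. Concretely, I expect to use $\tfrac{2q}{\sqrt{q(q+4)}-q}=\tfrac{q+\sqrt{q(q+4)}}{2}\cdot\tfrac{2}{q}\cdot\tfrac{1}{?}$-type rationalizations (multiplying numerator and denominator by the conjugate $\sqrt{q(q+4)}+q$ and using $(\sqrt{q(q+4)})^2-q^2=4q$) to convert one of the $(n+1)$-powers of a base into an $n$-power of its reciprocal-scaled conjugate, which is exactly the shape of the stated answer.

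The \emph{main obstacle} I anticipate is purely bookkeeping in this third step: tracking the signs and the distribution of the factor $q^n$ across the two terms, and verifying that rationalizing the denominators genuinely produces the asymmetric-looking bases $\tfrac{2q}{\sqrt{q(q+4)}-q}$ and $\tfrac{-2q}{q+\sqrt{q(q+4)}}$ rather than their more symmetric counterparts. To guard against errors I would check the identity at small $n$ against the explicit table of $f_\UFR^\uparrow(n,k)$ values given after Remark~\ref{recWFRI}, confirming for instance that the formula reproduces $L_{\UFR_4^\uparrow}(q)=q^2+3q^3+q^4$ and the boundary value $L_{\UFR_0^\uparrow}(q)=0$. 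An alternative route, should the conjugate manipulations prove unwieldy, is to bypass Lemma~\ref{prop:general polys} and instead verify the claimed closed form directly: confirm it satisfies the recurrence $L_{\UFR_n^\uparrow}(q)=q\,L_{\UFR_{n-1}^\uparrow}(q)+q\,L_{\UFR_{n-2}^\uparrow}(q)$ inherited from Remark~\ref{recWFRI} together with the base cases, since the stated bases are manifestly the two roots of the associated characteristic equation $\mu^2=q\mu+q$.
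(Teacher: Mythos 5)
Your proposal is correct, but it takes a genuinely different route from the paper's own proof. The paper argues combinatorially: it decomposes each $\alpha\in\UFR_n^\uparrow$ (for $n\geq 3$) uniquely as $1(\alpha'+1)$ or $11(\alpha''+2)$, derives the bivariate generating function $F(x,q)=\frac{xq(1+x)}{1-xq-x^2q}$, reads off the recurrence $L_{\UFR_{n+2}^\uparrow}(q)=q\,L_{\UFR_{n+1}^\uparrow}(q)+q\,L_{\UFR_{n}^\uparrow}(q)$ with $L_{\UFR_1^\uparrow}(q)=q$ and $L_{\UFR_2^\uparrow}(q)=q+q^2$, and then solves that recurrence; the authors explicitly chose this decomposition to ``enrich the exposition,'' acknowledging that an algebraic route exists. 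You instead start from $f_\UFR^\uparrow(n,k)=\binom{k}{n-k}$ (Theorem~\ref{numWIFRb}), reindex to get $L_{\UFR_n^\uparrow}(q)=q^n S_n(1/q)$, and invoke Lemma~\ref{prop:general polys} --- which is precisely the strategy the paper itself uses for the full set $\UFR_n$ in Corollary~\ref{cor:lucky poly for unit fubini ranking}, with $x=1/q$ here in place of $x=1/(2q)$ there. Your third step does go through, and more cleanly than you fear: writing $D=\sqrt{q(q+4)}$, the prefactor and the $(2q)^{n+1}$ in the denominators absorb the external $q^n$ to give $L_{\UFR_n^\uparrow}(q)=\frac{(q+D)^{n+1}-(q-D)^{n+1}}{2^{n+1}D}$, while in the stated formula the identity $(D-q)(D+q)=4q$ rationalizes the two bases to $\frac{2q}{D-q}=\frac{q+D}{2}$ and $\frac{-2q}{q+D}=\frac{q-D}{2}$, so the two expressions coincide term by term; there is no sign subtlety left over. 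Your fallback route (verify that the claimed form satisfies the recurrence inherited from Remark~\ref{recWFRI} together with the base cases) essentially reconstructs the paper's endgame, since $\frac{q\pm D}{2}$ are exactly the roots of $\mu^2=q\mu+q$. One small caveat: your proposed sanity check at $n=0$ is not meaningful, because the closed form evaluates to $1$ there, whereas the theorem sets $L_{\UFR_0^\uparrow}(q)=0$ by a separate convention outside the claimed range $n\geq 1$ (your $n=4$ check, by contrast, is correct). As for what each approach buys: yours is shorter, purely algebraic, and unifies the lucky polynomials of $\UFR_n$ and $\UFR_n^\uparrow$ as two instances of the same lemma; the paper's proof is independent of Theorem~\ref{numWIFRb} and exhibits a structural decomposition of weakly increasing unit Fubini rankings that has combinatorial interest beyond this one formula.
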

\begin{proof}
Consider the  bivariate generating function:
\[
F(x,q) \coloneq \sum_{\alpha \in \UFR^\uparrow} x^{|\alpha|} q^{\lucky(\alpha)} = \sum_{n \geq 1} \sum_{\alpha \in \UFR^\uparrow_n} q^{\lucky(\alpha)} x^n,
\]
where \( |\alpha| \) denotes the length of \( \alpha \), and \( \UFR^\uparrow = \bigcup_{n \geq 1} \UFR^\uparrow_n \) is the set of all weakly increasing unit Fubini rankings.

In what follows, we write $\alpha+k$ to mean adding the integer $k$ to every entry in $\alpha$ and whenever we write $x(\alpha)$, with $x$ a word with entries in $\mathbb{N}$, we mean appending each letter in $x$ at the start of the tuple $\alpha$ in the order the letters appear in $x$.
For $n \geq 3$, any $\alpha \in \UFR^\uparrow_n$ can be uniquely decomposed as either $1(\alpha'+1)$, where $\alpha' \in \UFR_{n-1}^\uparrow$, or $11(\alpha''+2)$, where $\alpha'' \in \UFR^\uparrow \cup \{\varepsilon\}$ and $\varepsilon$ denotes the empty ranking (with zero competitors).

In the first case, the initial $1$ contributes one lucky car (weight $q$) and length one (weight $x$), while the remaining part $\alpha'+1$ contributes $F(x,q)$. In the second case, the initial $11$ contributes a single lucky car (again with weight $q$) and length two (weight $x^2$), followed by $\alpha''+2$, which contributes either $F(x,q)$ if $\alpha'' \neq \varepsilon$, or $1$ if $\alpha''$ is empty. Therefore, the contribution to the generating function is
\[
xq F(x,q) + x^2 q (F(x,q) + 1).
\]

For the initial cases $n = 1$ and $n = 2$, we note that $\UFR^\uparrow_1 = \{ (1) \}$ contributes $xq$, and $\UFR^\uparrow_2 = \{ (1,1), (1,2) \}$ contributes $x^2 q + x^2 q^2 = x^2(q + q^2)$. Combining all contributions, we obtain
\[
F(x,q) = xq + x^2(q + q^2) + xq F(x,q) + x^2 q (F(x,q) + 1).
\]
Simplifying this expression, we have \[
F(x,q) = \frac{xq(1 + x)}{1 - xq - x^2 q}.
\]
Since  $F(x,q) = \sum_{n \geq 1} L_{\UFR_n^\uparrow}(q) x^n$, the coefficients satisfy the recurrence relation
\[
L_{\UFR_{n+2}^\uparrow}(q) = q L_{\UFR_{n+1}^\uparrow}(q) + q L_{\UFR_n^\uparrow}(q),
\]
with initial conditions $L_{\UFR_1^\uparrow}(q) = q$ and $L_{\UFR_2^\uparrow}(q) = q + q^2$. Solving this recurrence, for example using Maple, yields the stated closed form.
\end{proof}

\begin{corollary}\label{corosec3b} 
The expected number of lucky cars in a weakly increasing unit Fubini ranking with $n$ competitors is
    $$\mathbb{E}[\lucky(\alpha) :\alpha \in \UFR_n^\uparrow)] = \frac{1}{F_{n+1}} \sum_{k=1}^n k  \binom{k}{n-k}.$$  
    Moreover, 
    \[\mathbb{E}[\lucky(\alpha): \alpha \in \UFR_n^\uparrow)]\sim \frac{1}{10} \left(\left(5+\sqrt{5}\right) n+\sqrt{5}-1\right).\]
\end{corollary}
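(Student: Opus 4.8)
The plan is to deduce the exact formula directly from the definition of expectation and then to read off the asymptotics from the generating function established in \Cref{lucpolsec3}. For the exact identity, note that by definition
\[
\mathbb{E}[\lucky(\alpha):\alpha\in\UFR_n^\uparrow]
=\frac{1}{|\UFR_n^\uparrow|}\sum_{k\geq 1}k\,f_\UFR^\uparrow(n,k).
\]
Substituting $f_\UFR^\uparrow(n,k)=\binom{k}{n-k}$ from \Cref{numWIFRb} and $|\UFR_n^\uparrow|=F_{n+1}$ from \Cref{numWIFR} yields the stated closed form at once. Writing $f(n)\coloneqq L_{\UFR_n^\uparrow}(1)=F_{n+1}$ and $g(n)\coloneqq L'_{\UFR_n^\uparrow}(1)=\sum_{k\geq 1}k\binom{k}{n-k}$, the expectation is exactly the ratio $g(n)/f(n)$, as in the argument of \Cref{teor: exp number for ufp}.

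For the asymptotics I would work with the bivariate generating function $F(x,q)=\frac{xq(1+x)}{1-xq-x^2q}$ from \Cref{lucpolsec3}. Setting $q=1$ gives $\sum_{n\geq 1}f(n)x^n=\frac{x(1+x)}{1-x-x^2}$, while differentiating in $q$ and then setting $q=1$ gives $\sum_{n\geq 1}g(n)x^n=\frac{x(1+x)}{(1-x-x^2)^2}$. Both series have their dominant singularity at $\rho=\frac{\sqrt5-1}{2}=1/\phi$, where $\phi=\frac{1+\sqrt5}{2}$; the $f$-series has a simple pole and the $g$-series a double pole there, so $g(n)$ grows like $n\rho^{-n}$ against $f(n)\sim \rho^{-n}$, which already forces $g(n)/f(n)$ to be asymptotically linear in $n$. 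A local singularity analysis at $\rho$, or equivalently a partial-fraction decomposition isolating the $\rho$-contribution, then produces the full affine approximation; one finally checks, as in \Cref{teor: exp number for ufp}, that the contributions of the subdominant root $-\phi$ decay, so that the difference between $g(n)/f(n)$ and the stated affine expression tends to $0$ (which is the meaning of $\sim$ here).

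As an alternative to generating functions, I could differentiate the recurrence $L_{\UFR_{n+2}^\uparrow}(q)=q\,L_{\UFR_{n+1}^\uparrow}(q)+q\,L_{\UFR_n^\uparrow}(q)$ at $q=1$ to obtain $g(n+2)=g(n+1)+g(n)+F_{n+3}$, a Fibonacci recurrence with Fibonacci forcing, whose particular solution has the form $(an+b)\phi^n$; dividing by $F_{n+1}$ and letting $n\to\infty$ gives the same limit. I expect the main obstacle to be pinning down the constant term $\frac{\sqrt5-1}{10}$ rather than merely the slope $\frac{5+\sqrt5}{10}$: keeping only the leading double-pole singular term reproduces the slope but returns the wrong constant, and to recover $\frac{\sqrt5-1}{10}$ one must expand both $x(1+x)$ and $(1-x-x^2)^2$ one order beyond the leading singular behaviour, so that the simple-pole piece concealed inside the double pole is captured.
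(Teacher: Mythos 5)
Your proposal is correct, and while your exact-formula step coincides with the paper's (both amount to the definition of expectation combined with $f_\UFR^\uparrow(n,k)=\binom{k}{n-k}$ from \Cref{numWIFRb} and $|\UFR_n^\uparrow|=F_{n+1}$ from \Cref{numWIFR}, i.e.\ the ratio $L'_{\UFR_n^\uparrow}(1)/L_{\UFR_n^\uparrow}(1)$), your asymptotic argument takes a genuinely different route. The paper invokes the closed form $\sum_{k=1}^n k\binom{k}{n-k}=\tfrac{1}{5}\bigl((n+1)F_{n+3}+(n-2)F_{n+1}\bigr)$, cited from OEIS \seqnum{A023610} without proof, and then applies Binet's formula; you instead extract $\sum_{n\ge1} f(n)x^n=\frac{x(1+x)}{1-x-x^2}$ and $\sum_{n\ge1} g(n)x^n=\frac{x(1+x)}{(1-x-x^2)^2}$ from the bivariate generating function of \Cref{lucpolsec3} (both computations check out: $\partial_q\frac{q}{1-q(x+x^2)}=\frac{1}{(1-q(x+x^2))^2}$) and run a singularity/partial-fraction analysis at the dominant root $\rho=(\sqrt5-1)/2$, with the simple versus double pole explaining the linear growth of the ratio. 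Your route is more self-contained: carried to completion, the partial-fraction decomposition of the double-pole series reproduces exactly the Fibonacci identity that the paper only cites, and your alternative via the inhomogeneous recurrence $g(n+2)=g(n+1)+g(n)+F_{n+3}$ (which is correct; it follows by differentiating $L_{\UFR_{n+2}^\uparrow}(q)=qL_{\UFR_{n+1}^\uparrow}(q)+qL_{\UFR_n^\uparrow}(q)$ at $q=1$) achieves the same; what the paper's route buys is brevity. You also correctly anticipate the one genuine trap, namely that the leading double-pole term determines only the slope $\tfrac{5+\sqrt5}{10}$ and the constant $\tfrac{\sqrt5-1}{10}$ requires the next-order expansion. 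One small remark: the paper defines $f\sim g$ as $\lim_{n\to\infty} f/g=1$, under which the constant term is in fact invisible (any constant gives ratio tending to $1$ once the slopes agree); the stronger reading you adopt, that the difference tends to $0$, is what both you and the paper actually establish, and it implies the statement as written.
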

\begin{proof}
Observe that
\begin{align}
    L_{\UFR_n^\uparrow}(1)&=\sum_{k=1}^n\binom{k}{n-k}=F_{n+1} \label{FiboT}\intertext{and} 
    \left. \left(\frac{\partial}{\partial q}  L_{\UFR_n^\uparrow}(q)\right) \right|_{q=1} &= \sum_{k=1}^nk\binom{k}{n-k}. \label{FiboT2}
\end{align}

For \Cref{FiboT}, note that when $k < \ceil{n/2}$, $\binom{k}{n-k}=0$,which aligns with the fact that there cannot exist unit Fubini rankings with fewer than $\ceil{n/2}$ lucky cars. Therefore, an equivalent summation is
\[
\sum_{k=\ceil{n/2}}^n \binom{k}{n-k} = \sum_{j=0}^{\floor{n/2}} \binom{n-j}{j} = F_{n+1},
\]
which is a well-known identity for the Fibonacci numbers.

The sequence defined in \Cref{FiboT2} corresponds to the OEIS sequence \seqnum{A023610} \cite{OEIS}.  
In particular, 
\begin{align}\label{expression 1}\sum_{k=1}^nk\binom{k}{n-k}=\frac{1}{5}\left((n+1)F_{n+3}+(n-2)F_{n+1} \right), \quad n\geq 1.
\end{align}
Binet's formula for the Fibonacci numbers gives
\begin{align}
\label{expression 2}
F_n\sim \frac{1}{\sqrt{5}}\left(\frac{1+\sqrt{5}}{2} \right)^n.
\end{align}
Combining \Cref{expression 1,expression 2}, we obtain
\begin{align*}
   \frac{L'_{\UFR_n^\uparrow}(1)}{L_{\UFR_n^\uparrow}(1)} &= \frac{1}{5F_{n+1}}\left((n+1)F_{n+3}+(n-2)F_{n+1} \right)\\
   &\sim \frac{3+\sqrt{5}}{10}(n+1) + \frac{1}{5}(n-2)\\
   &=\frac{1}{10} \left(\left(5+\sqrt{5}\right) n+\sqrt{5}-1\right),
\end{align*}
which proves the asymptotic estimate.
\end{proof}

\begin{theorem}\label{EGFWUFR}
The exponential generating function for the number of weakly increasing unit Fubini rankings with $n$ competitors and with $k$ lucky cars is 
  \[\sum_{n\geq 0} \sum_{k\geq 0} f_\UFR(n,k)^\uparrow q^k \frac{x^n}{n!}= \frac{\lambda_1 e^{\lambda_1 x} - \lambda_2 e^{\lambda_2 x}}{\lambda_1 - \lambda_2},
\]
where
$\lambda_{1} = \frac{q + \sqrt{q^{2} + 4q}}{2}$ and $\lambda_{2} = \frac{q - \sqrt{q^{2} + 4q}}{2}$.
\end{theorem}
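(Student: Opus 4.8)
The plan is to convert the linear recurrence for the lucky polynomials established in \Cref{lucpolsec3} into an ordinary differential equation satisfied by the sought exponential generating function, and then solve that ODE explicitly. First I would set
\[
E(x,q)\coloneqq\sum_{n\geq 0}\sum_{k\geq 0} f_\UFR^\uparrow(n,k)\,q^k\frac{x^n}{n!}=\sum_{n\geq 0} L_n(q)\frac{x^n}{n!},
\]
where $L_n(q)\coloneqq\sum_{k\geq 0} f_\UFR^\uparrow(n,k)q^k$. For $n\geq 1$ this is precisely the lucky polynomial $L_{\UFR_n^\uparrow}(q)$, while the $n=0$ term of the double sum equals $f_\UFR^\uparrow(0,0)=\binom{0}{0}=1$, so $L_0(q)=1$ and $L_1(q)=q$.

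The key observation is that the recurrence $L_{\UFR_{n+2}^\uparrow}(q)=qL_{\UFR_{n+1}^\uparrow}(q)+qL_{\UFR_n^\uparrow}(q)$ of \Cref{lucpolsec3} in fact holds for every $n\geq 0$ once we record the EGF's constant term as $L_0(q)=1$: the case $n=0$ reads $q+q^2=q\cdot q+q\cdot 1$, which is valid. Multiplying $L_{n+2}(q)=qL_{n+1}(q)+qL_n(q)$ by $x^n/n!$ and summing over $n\geq 0$, and recognizing $\sum_{n\geq 0}L_{n+2}(q)x^n/n!=\partial_x^2 E$, $\sum_{n\geq 0}L_{n+1}(q)x^n/n!=\partial_x E$, yields the second-order constant-coefficient linear ODE
\[
\partial_x^2 E - q\,\partial_x E - q\,E = 0,
\]
with initial conditions $E(0,q)=L_0(q)=1$ and $(\partial_x E)(0,q)=L_1(q)=q$.

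I would then solve this ODE in $x$, treating $q$ as a parameter. Its characteristic equation is $\lambda^2-q\lambda-q=0$, whose roots are exactly $\lambda_1=\frac{q+\sqrt{q^2+4q}}{2}$ and $\lambda_2=\frac{q-\sqrt{q^2+4q}}{2}$, and these satisfy $\lambda_1+\lambda_2=q$ and $\lambda_1\lambda_2=-q$. Writing the general solution as $E=A e^{\lambda_1 x}+B e^{\lambda_2 x}$ and imposing the two initial conditions gives $A+B=1$ and $A\lambda_1+B\lambda_2=q=\lambda_1+\lambda_2$; solving this $2\times 2$ system (using $\lambda_1\neq\lambda_2$) produces $A=\lambda_1/(\lambda_1-\lambda_2)$ and $B=-\lambda_2/(\lambda_1-\lambda_2)$, which yields the claimed closed form $E(x,q)=\frac{\lambda_1 e^{\lambda_1 x}-\lambda_2 e^{\lambda_2 x}}{\lambda_1-\lambda_2}$.

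The main obstacle, and really the only place requiring care, is the bookkeeping at the boundary: one must verify that the recurrence of \Cref{lucpolsec3} extends to $n=0$ after correctly identifying the constant term of the EGF as $f_\UFR^\uparrow(0,0)=1$, rather than the value $L_{\UFR_0^\uparrow}(q)=0$ adopted for the ordinary generating function in \Cref{lucpolsec3}. Once the two initial conditions $E(0,q)=1$ and $(\partial_x E)(0,q)=q$ are pinned down, the rest is a routine solution of a linear ODE, and the relations $\lambda_1+\lambda_2=q$ and $\lambda_1\lambda_2=-q$ make the determination of $A$ and $B$ immediate.
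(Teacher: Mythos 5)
Your proposal is correct and follows essentially the same route as the paper: both convert the recurrence $L_{n+2}(q)=q\bigl(L_{n+1}(q)+L_{n}(q)\bigr)$ into the constant-coefficient ODE $\partial_x^2 E - q\,\partial_x E - qE = 0$ and solve it via the characteristic equation $\lambda^2-q\lambda-q=0$ with the initial conditions $E(0,q)=1$ and $(\partial_x E)(0,q)=q$. Your care at the boundary is warranted—the paper's proof misstates the first initial condition as $A(0,q)=0$ (a typo, since it then imposes $A+B=1$), so your explicit identification of the constant term as $f_\UFR^\uparrow(0,0)=1$ is exactly the right reading.
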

\begin{proof}
Let 
\[
S_n(q)\coloneqq \sum_{k=1}^{n} \binom{k}{\,n-k\,} q^{k}.
\]
The sequence satisfies the recurrence relation
\[
S_n(q) = q\big(S_{n-1}(q) + S_{n-2}(q)\big), \qquad n \geq 2,
\]
with initial conditions \(S_0(q) = 1\) and $S_1(q) = q$. Let 
\[
A(x,q) = \sum_{n\ge0} S_n(q)\,\frac{x^n}{n!}
\]
be its exponential generating function.  
Multiplying the recurrence by \(x^n/n!\) and summing over \(n \geq 2\) yields the differential equation
\[
A''(x,q)-qA'(x,q)-qA(x,q)=0,
\]
with initial conditions \(A(0,q) = 0\) and \(A'(0,q) = q\).
The characteristic equation \(\lambda^2-q\lambda-q=0\) has roots
\[\lambda_{1}=\frac{q+\sqrt{q^2+4q}}{2} \quad \text{and} \quad \lambda_{1}=\frac{q-\sqrt{q^2+4q}}{2}.\] Hence, for some constants, $A$ and $B$, the general solution is
$A(x,q)=A e^{\lambda_1 x}+B e^{\lambda_2 x}$. From the initial conditions, we obtain the system 
\[
A+B=1,\qquad \lambda_1A+\lambda_2B=q,
\]
whose solution is
\(
A=\frac{\lambda_1}{\lambda_1-\lambda_2},\ 
B=\frac{\lambda_2}{\lambda_2-\lambda_1}.
\)
Therefore,
\[
A(x,q)=\frac{\lambda_1 e^{\lambda_1 x}-\lambda_2 e^{\lambda_2 x}}{\lambda_1-\lambda_2},
\]
as claimed.
\end{proof}

\begin{corollary}\label{EGFWUFRcoro}
The exponential generating function for the total number of weakly increasing unit Fubini rankings with $n$ competitors  is 
  \[\sum_{n\geq 0} |\UFR_n^\uparrow| \frac{x^n}{n!}= \frac{\lambda_1 e^{\lambda_1 x} - \lambda_2 e^{\lambda_2 x}}{\lambda_1 - \lambda_2},
\]
where
$\lambda_{1} = \frac{1 + \sqrt{5}}{2}$ and $\lambda_{2} = \frac{1 - \sqrt{5}}{2}$.
\end{corollary}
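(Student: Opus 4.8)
The plan is to obtain this statement as the specialization at $q=1$ of \Cref{EGFWUFR}. First I would note that setting $q=1$ in the bivariate generating function collapses the inner sum, since
\[
\sum_{k\geq 0} f_\UFR^\uparrow(n,k)\cdot 1^{k} = \sum_{k\geq 0} f_\UFR^\uparrow(n,k) = |\UFR_n^\uparrow|,
\]
because summing over all possible numbers $k$ of lucky cars counts every weakly increasing unit Fubini ranking with $n$ competitors exactly once. Hence the left-hand side of \Cref{EGFWUFR} becomes $\sum_{n\geq 0}|\UFR_n^\uparrow|\,\frac{x^n}{n!}$, which is precisely the generating function we wish to evaluate.

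Next I would substitute $q=1$ into the roots $\lambda_1$ and $\lambda_2$ appearing in \Cref{EGFWUFR}. Since $q^2+4q=5$ at $q=1$, the roots specialize to $\lambda_1=\frac{1+\sqrt{5}}{2}$ and $\lambda_2=\frac{1-\sqrt{5}}{2}$, the golden ratio and its conjugate. Plugging these values into the closed form $\frac{\lambda_1 e^{\lambda_1 x}-\lambda_2 e^{\lambda_2 x}}{\lambda_1-\lambda_2}$ yields the claimed expression directly, with no further manipulation required.

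Since the corollary is a direct evaluation, there is no substantive obstacle; the only point to check is that the specialization is legitimate, that is, that the formal power series identity of \Cref{EGFWUFR} remains valid after setting $q=1$. This holds because each coefficient $f_\UFR^\uparrow(n,k)$ is a fixed integer and, for each fixed $n$, the sum over $k$ is finite, so the substitution $q=1$ acts coefficient-wise on a well-defined power series in $x$. As a consistency check, one could confirm that extracting the coefficient of $\frac{x^n}{n!}$ reproduces the Fibonacci numbers $F_{n+1}=|\UFR_n^\uparrow|$ established in \Cref{numWIFR}; indeed $\lambda_1-\lambda_2=\sqrt{5}$ and the $\lambda_i$ are exactly the roots governing Binet's formula, so the resulting coefficients are the Fibonacci numbers by construction.
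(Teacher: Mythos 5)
Your proposal is correct and matches the paper's approach exactly: the corollary is obtained by setting $q=1$ in \Cref{EGFWUFR}, just as the paper does for its other corollaries of this type (e.g., \Cref{coroEGF1} and \Cref{corosumF}). Your added remarks on the legitimacy of the specialization and the Fibonacci consistency check are sound but not needed beyond the direct evaluation.
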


\subsection{Unit Fubini Rankings with a Fixed Set of Lucky Cars}\label{sec:unit fubini lucky sets}
The goal of this section is to determine the cardinality of the set of unit Fubini rankings with $n$ competitors having a fixed set of lucky cars. 
Our main result is below.

\begin{theorem} \label{luckySec3a}
Let $I=\{i_1=1, i_2, \ldots, i_k\}\subseteq [n]$ be a lucky set of $\UFR_n$ such that $1=i_1<i_2<\cdots < i_k\le n$.  
Let $U=\{u_1,u_2,\ldots,u_{n-k}\}$ be the ordered set of indices of the unlucky cars, that is, $U=[n]\setminus I$.  
Then
\[
|\Lucky_{\UFR_n}(I)| = k! \prod_{\ell=1}^{n-k}(u_\ell - 2\ell + 1).
\]
\end{theorem}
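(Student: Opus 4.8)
The plan is to pass through the bijection between Fubini rankings and ordered set partitions recalled on Page~\pageref{BijFR}, under which a unit Fubini ranking corresponds to an ordered set partition whose blocks all have size at most two. Under this correspondence the value at position $i$ is the rank of the block containing $i$, and the distinct values are exactly the block ranks; hence by \Cref{teor:lucky count is equal to number of ranks} a car is lucky precisely when its index is the minimum of its block. Thus $|\Lucky_{\UFR_n}(I)|$ equals the number of ordered set partitions of $[n]$ into blocks of size at most two whose set of block-minima is exactly $I$.

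First I would separate the ordering of the blocks from the underlying unordered partition. Since both the block-minima and the block sizes are independent of the order in which the blocks are listed, each of the $k!$ orderings of a fixed unordered set partition into $k$ blocks of size at most two with block-minima $I$ yields a distinct valid element of $\Lucky_{\UFR_n}(I)$, and conversely every element arises in this way exactly once. Therefore $|\Lucky_{\UFR_n}(I)| = k!\cdot N$, where $N$ counts the (unordered) set partitions of $[n]$ into blocks of size at most two whose block-minima are exactly $I$.

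It then remains to show $N=\prod_{\ell=1}^{n-k}(u_\ell-2\ell+1)$. Such a partition is determined by a map $\varphi$ sending each unlucky index $u\in U$ to the lucky index $\varphi(u)\in I$ with which it shares a block: validity forces $\varphi(u)<u$ (as $\varphi(u)$ is the block minimum) and forces $\varphi$ to be injective (each lucky index lies in at most one size-two block), and conversely any injection $\varphi\colon U\to I$ with $\varphi(u)<u$ produces a valid partition with block-minima exactly $I$, since each unlucky index is then paired with a strictly smaller index and so is never a minimum. I would count such injections greedily, assigning targets to $u_1<u_2<\cdots<u_{n-k}$ in increasing order. Using $I\cup U=[n]$ together with $|\{u_1,\dots,u_{\ell-1}\}|=\ell-1$, the number of lucky indices below $u_\ell$ is $(u_\ell-1)-(\ell-1)=u_\ell-\ell$. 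The crux is that every lucky index already used by $u_1,\dots,u_{\ell-1}$ is automatically smaller than $u_\ell$, because each was paired with some $u_j<u_\ell$; hence exactly $\ell-1$ of the $u_\ell-\ell$ admissible targets are occupied, leaving $u_\ell-2\ell+1$ free choices for $u_\ell$, a count that does not depend on the earlier assignments. Multiplying over $\ell=1,\dots,n-k$ gives $N$, and combining with the factor $k!$ yields the claimed formula.

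The main obstacle is precisely this independence: establishing that the per-step count is always $u_\ell-2\ell+1$ regardless of the previous choices is what licenses replacing the matching count by a product. As a consistency check on the indexing conventions I would confirm that the formula reproduces known values, e.g.\ $|\Lucky_{\UFR_4}(\{1,2\})|=2!\,(3-1)(4-3)=4$ and $|\Lucky_{\UFR_4}(\{1,3\})|=2!\,(2-1)(4-3)=2$, which sum to $f_\UFR(4,2)=6$ from the table following \Cref{recSec3}.
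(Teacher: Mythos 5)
Your proposal is correct and follows essentially the same route as the paper's proof: a factor of $k!$ accounting for the relative order of the ranks of the lucky cars, multiplied by a greedy product over the unlucky cars in increasing order, where $u_\ell$ has $(u_\ell-\ell)-(\ell-1)=u_\ell-2\ell+1$ admissible choices because every previously used lucky index is automatically below $u_\ell$. Your reformulation through ordered set partitions and injections $\varphi\colon U\to I$ with $\varphi(u)<u$ is a slightly more formal packaging of the paper's direct rank-assignment argument, but the decomposition and the key counting step are identical.
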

\begin{proof}
First note that there are $k!$ ways to assign the $k$ distinct ranks to the entries $a_{i_1}, a_{i_2}, \dots, a_{i_k}$ corresponding to the lucky cars.

Recall that in a unit Fubini ranking, each rank may appear at most twice.  
The first unlucky car $u_1$ can therefore take any rank already assigned to a lucky car preceding it.  
The second unlucky car $u_2$ can also take any rank assigned to a preceding lucky car,  
but it cannot share the same value as the previous unlucky car $u_1$.

In general, for $\ell\in [n-k]$, the unlucky car $u_\ell$ can take the rank of any preceding lucky car,  
but it cannot be tied with any of the previous unlucky cars $u_1,\dots,u_{\ell-1}$.  
The number of lucky cars preceding $u_\ell$ is
\[
\max\{s : i_s < u_\ell\} = |\{i\in I : i < u_\ell\}| = (u_\ell - 1) - (\ell - 1) = u_\ell - \ell,
\]
and there are $\ell-1$ preceding unlucky cars.  
Hence, the number of possible choices for car $u_\ell$ is
\[
(u_\ell - \ell) - (\ell - 1) = u_\ell - 2\ell + 1.
\]

Multiplying over all $\ell = 1,2,\dots,n-k$ and accounting for the $k!$ ways of assigning the initial ranks,  
we obtain the desired result.
\end{proof}

\subsubsection{Weakly Increasing  Unit  Fubini Rankings}\label{sec:weakly increasing unit fubini lucky sets}

In this case the collection of lucky sets displays some rather interesting features. We now establish which subsets $I\subseteq[n]$ can be lucky sets for weakly increasing unit Fubini rankings.

\begin{theorem}\label{thm:lucky-characterization}
A subset $I = \{1 = i_1 < i_2 < \cdots < i_k\} \subseteq [n]$ is a lucky set of $\UFR_n^{\uparrow}$ if and only if $|I| \geq \lceil n/2 \rceil$ and $i_{j+1} - i_j \le 2$ for all $j \in [k-1]$.
\end{theorem}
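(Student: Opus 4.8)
The plan is to reduce the characterization to a statement about integer compositions, using the bijection of \Cref{numWIFR} between weakly increasing unit Fubini rankings with $n$ competitors and compositions of $n$ with all parts in $\{1,2\}$. Under this correspondence a ranking $\alpha$ is constant on its maximal runs of equal entries (its \emph{blocks}), and by \Cref{def: unit fubini rankings} each block has size $1$ or $2$; the lucky cars are exactly the first positions of these blocks. Writing $I=\{i_1<\cdots<i_k\}$ for the block-starting positions and setting $i_{k+1}=n+1$, the $j$th block occupies positions $i_j,\dots,i_{j+1}-1$ and therefore has size $i_{j+1}-i_j$. Thus $I$ is a lucky set of $\UFR_n^{\uparrow}$ if and only if every gap $i_{j+1}-i_j$ lies in $\{1,2\}$, and the whole argument amounts to showing that this clean gap condition is equivalent to the two conditions in the statement.

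First I would prove the forward implication. If $I$ is a lucky set, I pick any $\alpha\in\UFR_n^{\uparrow}$ with $\Lucky(\alpha)=I$; its block sizes are precisely the gaps $i_{j+1}-i_j$, each lying in $\{1,2\}$, so in particular $i_{j+1}-i_j\le 2$ for every $j\in[k-1]$. Moreover the $k=|I|$ blocks partition the $n$ positions into parts of size at most $2$, so $n\le 2k$ and hence $|I|=k\ge\lceil n/2\rceil$. This is exactly the feasibility relation already used in the proof of \Cref{thm:unit fubini with k lucky}, where writing $j=n-k$ for the number of doubled ranks and $i=2k-n$ for the number of singleton ranks, nonnegativity of $i$ forces $k\ge\lceil n/2\rceil$. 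So the forward direction is essentially a bookkeeping argument built on the block decomposition.

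For the converse I would, given $I$ satisfying the two hypotheses, attempt to reconstruct a ranking by declaring the intended block sizes to be $c_j\coloneqq i_{j+1}-i_j$ (with $i_{k+1}=n+1$) and invoking \Cref{numWIFR} to produce the unique weakly increasing unit Fubini ranking with those block sizes, provided each $c_j\in\{1,2\}$; that ranking then has block-starts exactly $I$, giving $\Lucky(\alpha)=I$. The internal gaps $c_1,\dots,c_{k-1}$ are at least $1$ by strict monotonicity of the $i_j$ and at most $2$ by hypothesis, so the entire difficulty is concentrated in the \textbf{terminal block}: one must verify $c_k=n+1-i_k\in\{1,2\}$. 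The lower bound $c_k\ge 1$ is immediate from $i_k\le n$, but the upper bound $c_k\le 2$, equivalently $i_k\ge n-1$, is the main obstacle. Since the internal-gap bound only yields $i_k\in[k,2k-1]$ and the cardinality bound only yields $k\ge\lceil n/2\rceil$, closing this last gap is exactly the delicate point where the global count and the local gap structure must be made to interact; I expect the honest way to pin down the terminal gap is to fold it into the gap condition through the convention $i_{k+1}=n+1$, so that the boundary block is treated on the same footing as the interior ones, and then read off the cardinality inequality as its consequence.
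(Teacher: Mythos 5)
Your reduction to compositions and your forward implication are correct, and the forward direction is in fact \emph{all} that the paper's own proof establishes: the paper argues only that a lucky set must have internal gaps at most $2$ and size at least $\lceil n/2\rceil$, and never addresses sufficiency at all. The point where you stopped --- the terminal block --- is not a gap you failed to close; it genuinely cannot be closed, because the converse of the theorem as stated is false. Take $n=4$ and $I=\{1,2\}$: then $|I|=2=\lceil 4/2\rceil$ and $i_2-i_1=1\le 2$, so both hypotheses hold, but block starts at positions $1$ and $2$ would force positions $2,3,4$ into a single tie block of size $3$, violating the unit condition. Indeed, by \Cref{numWIFRb} there is exactly $\binom{2}{2}=1$ weakly increasing unit Fubini ranking with $4$ competitors and $2$ lucky cars, namely $(1,1,3,3)$, whose lucky set is $\{1,3\}$, not $\{1,2\}$. (Similarly, for $n=5$ the set $\{1,2,3\}$ satisfies both hypotheses, while the actual lucky sets of size $3$ are $\{1,2,4\}$, $\{1,3,4\}$, $\{1,3,5\}$.) So your hesitation at exactly this point is the correct mathematical conclusion, not a defect of your argument.

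The repair is the one you sketch in your final sentence: set $i_{k+1}=n+1$ and require $i_{j+1}-i_j\le 2$ for all $j\in[k]$, i.e., add the terminal condition $i_k\ge n-1$ to the internal gap condition. With this hypothesis your construction goes through verbatim (all block sizes $c_j=i_{j+1}-i_j$ lie in $\{1,2\}$, and the block decomposition produces the unique ranking as in \Cref{corosec3cardlu}), and the cardinality bound $|I|\ge\lceil n/2\rceil$ becomes a consequence rather than a hypothesis, since $n=\sum_{j=1}^{k}(i_{j+1}-i_j)\le 2k$. Note that the paper itself quietly uses the missing condition: the proof of \Cref{corosec3cardlu} inserts ``$n-i_k\in\{0,1\}$'' even though this does not follow from the two conditions stated in \Cref{thm:lucky-characterization}. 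Relative to the paper, then, your proposal is the more careful treatment: the paper proves necessity of two conditions that are not sufficient, while you isolated precisely the condition that must be added to make the equivalence true.
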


\begin{proof}
Recall that in a weakly increasing unit Fubini ranking of length~$n$, each rank appears at most twice and $a_i \le a_{i+1}$ for all $i \in [n]$.  
From this definition, any two cars sharing the same rank must occupy consecutive positions, which implies $i_{j+1} - i_j \le 2$ for all $j$.  
Moreover, by a previous result, the first car is always lucky.  
Hence, the minimum possible number of lucky cars in $\UFR_n^{\uparrow}$ is $\lceil n/2 \rceil$, completing the proof.
\end{proof}
We conclude with showing that for any lucky set, there is a unique unit Fubini ranking with that lucky set.
\begin{corollary}\label{corosec3cardlu}
For a fixed subset \( I = \{1 = i_1 < i_2 < \cdots < i_k\} \subseteq [n] \) satisfying the characterization in Theorem~\ref{thm:lucky-characterization}, there is exactly one weakly increasing unit Fubini ranking whose lucky set is \( I \).  \end{corollary}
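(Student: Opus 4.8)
The plan is to deduce the statement by combining the existence half, which is already packaged inside \Cref{thm:lucky-characterization}, with a uniqueness argument inherited from the larger family of weakly increasing Fubini rankings. The key structural observation is the inclusion $\UFR_n^\uparrow \subseteq \FR_n^\uparrow$: every weakly increasing unit Fubini ranking is in particular a weakly increasing Fubini ranking, and the lucky set of an element is the same whether it is viewed inside $\UFR_n^\uparrow$ or inside $\FR_n^\uparrow$. This inclusion is exactly what lets me import the uniqueness result of \Cref{sec:weakly increasing fubinis lucky sets} without reproving it.

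First I would dispatch existence. Since $I$ is assumed to satisfy the characterization of \Cref{thm:lucky-characterization}, that theorem certifies that $I$ is a lucky set of $\UFR_n^\uparrow$; by the definition of a lucky set this means there is at least one $\alpha \in \UFR_n^\uparrow$ with $\Lucky(\alpha) = I$. Thus existence requires no work beyond invoking \Cref{thm:lucky-characterization}, and in particular I would not re-derive the bounds $i_{j+1}-i_j \le 2$ or $|I| \ge \lceil n/2 \rceil$, as these are already absorbed into that characterization.

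Next I would establish uniqueness by transporting the problem into $\FR_n^\uparrow$. Any $\alpha \in \UFR_n^\uparrow$ with $\Lucky(\alpha) = I$ is simultaneously an element of $\FR_n^\uparrow$ with the same lucky set $I$, so $I$ is in particular a lucky set of $\FR_n^\uparrow$. By \Cref{closed formula FRW} there is a \emph{unique} weakly increasing Fubini ranking whose lucky set equals $I$. Consequently any two weakly increasing unit Fubini rankings with lucky set $I$ must both coincide with this single weakly increasing Fubini ranking, hence with each other. Together with the existence step, this yields exactly one such ranking, and indeed it is the explicit $\alpha$ produced by the construction in the proof of \Cref{closed formula FRW}, whose blocks $\{i_j, i_j+1, \dots, i_{j+1}-1\}$ have the prescribed sizes.

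I do not anticipate a genuine obstacle, since the corollary is essentially a bookkeeping combination of two earlier theorems. The only point demanding care is to keep the two roles of the hypothesis cleanly separated: the characterization of \Cref{thm:lucky-characterization} is used solely to guarantee that a witness exists, whereas uniqueness is forced entirely by the weakly increasing condition through \Cref{closed formula FRW} and the inclusion $\UFR_n^\uparrow \subseteq \FR_n^\uparrow$. Conflating these two uses, or attempting to re-establish the admissibility of $I$ from scratch inside the corollary, would be the natural pitfall to avoid.
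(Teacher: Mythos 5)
Your uniqueness argument is correct, and it is genuinely different from the paper's proof: the paper argues by direct construction (the gaps $i_{j+1}-i_j$, together with $n-i_k$, determine the sizes of the tie blocks, and the Fubini condition then forces the rank values), whereas you inherit uniqueness from the larger family via the inclusion $\UFR_n^\uparrow\subseteq\FR_n^\uparrow$ and the uniqueness statement of Theorem~\ref{closed formula FRW}. Your route makes explicit something the paper's proof does not: for \emph{every} subset $I\subseteq[n]$, admissible or not, there is at most one element of $\UFR_n^\uparrow$ with lucky set $I$, so the hypotheses on $I$ are needed only for existence.

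The gap is in the existence half. You delegate existence entirely to the ``if'' direction of \Cref{thm:lucky-characterization}, but within this paper that direction is never independently established: the printed proof of \Cref{thm:lucky-characterization} verifies only necessity of the two conditions, and sufficiency is precisely what the constructive proof of this corollary is meant to supply. So your existence step is circular relative to the paper's logical structure. Worse, sufficiency as literally stated fails: take $n=5$ and $I=\{1,2,3\}$, which satisfies $|I|=3\geq\lceil 5/2\rceil$ and has all gaps equal to $1$, yet any weakly increasing ranking with lucky set $I$ must have cars $4$ and $5$ unlucky, forcing $a_3=a_4=a_5$, a triple tie; hence no element of $\UFR_5^\uparrow$ has this lucky set. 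The missing condition is $n-i_k\leq 1$, which the paper's own proof quietly inserts via the parenthetical ``$n-i_k\in\{0,1\}$.'' The repair stays entirely within your framework and is exactly the step you chose to skip: the unique $\alpha\in\FR_n^\uparrow$ with lucky set $I$ produced by Theorem~\ref{closed formula FRW} is constant on blocks of sizes $i_2-i_1,\dots,i_k-i_{k-1},\,n+1-i_k$, so it lies in $\UFR_n^\uparrow$ if and only if all of these block sizes are at most $2$. Checking that inequality gives existence constructively, and it surfaces the extra condition on $n-i_k$ instead of assuming it away.
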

\begin{proof}
Since ranks may repeat only on consecutive positions, the differences \( i_{j+1} - i_j \in \{1,2\} \) (and \( n - i_k \in \{0,1\} \)) uniquely determine the block sizes of equal ranks. Each block receives the ranks \( 1,2,\dots,k \) in order, so the ranking is completely determined.
This establishes the result.
\end{proof}

\section{Future Directions}\label{sec:future}
In this article, we considered the set of Fubini rankings and unit Fubini rankings and enumerated them based on a fixed number or a fixed set of lucky cars, studying also their lucky polynomials. One could repeat this analysis for the following  sets of tuples, which are examples of subsets of ``restricted'' Fubini rankings. 
\begin{enumerate}
\item Brandt et al., \cite[Definition 3.2]{unit_pf} introduced $r$-Fubini rankings of length $n+r$, which are Fubini rankings with $n+r$ competitors whose first $r$ values are distinct. 
\item We recall that $\ell$-interval parking functions are parking functions in which cars park at most $\ell$ spots away from their preference. Note the $1$-interval parking functions are the unit interval parking functions. Aguilar-Fraga et al., \cite[p.\ 16]{lintervalrational} introduced $\ell$-interval Fubini rankings, which are the intersection of $\ell$-interval parking functions of length $n$ which
are also Fubini rankings. 
\item Barreto et al., \cite{BHRRV} introduced a variety of restricted Fubini rankings and restricted unit interval parking functions.
\end{enumerate}
 
\section*{Acknowledgments}
Beerbower, Harris, Martinez, and Shirley acknowledge support by the National Science Foundation under Grant No. DMS-2150434. The authors thank Lybitina Koene for the initial code that facilitated the initial stages of this research.
L.~Martinez was supported by the NSF Graduate Research Fellowship Program under Grant No. 2233066.
P.\ E.\ Harris was supported in part by a Simons Travel Support for Mathematicians award \#00007621.

\bibliographystyle{plain}
\bibliography{bibliography.bib}

\end{document}